\theoremstyle{definition}
\theoremstyle{remark}
\numberwithin{equation}{section}
\numberwithin{equation}{section}
\numberwithin{figure}{section}
  \theoremstyle{remark}
  \newtheorem*{rem*}{Remark}
\theoremstyle{plain}
\newtheorem{thm}{Theorem}[section]
  \theoremstyle{definition}
  \newtheorem{defn}[thm]{Definition}
  \theoremstyle{plain}
  \newtheorem{prop}[thm]{Proposition}
  \theoremstyle{plain}
  \newtheorem{lem}[thm]{Lemma}
  \theoremstyle{remark}
  \newtheorem{rem}[thm]{Remark}
 \theoremstyle{definition}
  \theoremstyle{plain}
  \newtheorem{fact}[thm]{Fact}
  \theoremstyle{plain}
  \newtheorem{conjecture}[thm]{Conjecture}
  \theoremstyle{plain}
  \newtheorem{cor}[thm]{Corollary}
  \theoremstyle{definition}
  \newtheorem{problem}[thm]{Problem}
  \newtheorem{example}[thm]{Example}
\newcommand{\xyR}[1]{%
\makeatletter
\xydef@\xymatrixrowsep@{#1}
\makeatother
}  
\newcommand{\xyC}[1]{%
\makeatletter
\xydef@\xymatrixcolsep@{#1}
\makeatother
}
\begin{document}

\title{From Operads to Dendroidal Sets}

\author{Ittay Weiss}
\address{Mathematical Institute, Utrecht University, Budapestlaan 6,
3584 CD, Utrecht, The Netherlands}
\email{I.Weiss@uu.nl}

\subjclass [2010]{Primary 55P48, 55U10, 55U35; Secondary 18D50, 18D10, 18G20}
\date{January 1, 1994 and, in revised form, June 22, 1994.}

\keywords{Operads, Homotopy Theory, Weak Algebras}

\begin{abstract}
Dendroidal sets offer a formalism for the study of $\infty$-operads
akin to the formalism of $\infty$-categories by means of simplicial
sets. We present here an account of the current state of the theory
while placing it in the context of the ideas that led to the conception
of dendroidal sets. We briefly illustrate how the added flexibility
embodied in $\infty$-operads can be used in the study of $A_{\infty}$-spaces
and weak $n$-categories in a way that cannot be realized using strict
operads. 
\end{abstract}

\maketitle

\section{Introduction }

This work aims to be a conceptually self-contained introduction to
the theory and applications of dendroidal sets, surveying the current
state of the theory and weaving together ideas and results in topology
to form a guided tour that starts with Stasheff's work \cite{H spaces}
on $H$-spaces, goes on to Boardman and Vogt's work \cite{BV book}
on homotopy invariant algebraic structures followed by the generalization
\cite{ax hom th ope,BV res ope,res colour ope} of their work by Berger
and Moerdijk, arrives at the birth of dendroidal sets \cite{den set,inn Kan in dSet}
and ends with the establishment, by Cisinski and Moerdijk in \cite{dSet model hom op,den Seg sp,dSet and simp ope},
of dendroidal sets as models for homotopy operads. With this aim in
mind we adopt the convention of at most pointing out core arguments
of proofs rather than detailed proofs that can be found elsewhere. 

We assume basic familiarity with the language of category theory and mostly follow
\cite{Mac Lane CWM}. Regarding enriched category theory we assume
little more than familiarity with the definition of a category enriched
in a symmetric monoidal category as can be found in \cite{Kelly Enr. Cat. }.
The elementary results on presheaf categories that we use can be found in \cite{Mac Moer Sheaves}.
Some comfort of working with simplicial sets is needed for which the
first chapter of \cite{Goers Jardin} suffices. Some elementary understanding
of Quillen model categories is desirable with standard references
being \cite{model cat localiza,Hovey Model Cat}. 

Operads arose in algebraic topology and have since found applications
across a wide range of fields including Algebra, Theoretical Physics,
and Computer Science. The reason for the success of operads is that
they offer a computationally effective formalism for treating algebraic
structures of enormous complexity, usually involving some notion of
(abstract) homotopy. As such, the first operads to be introduced were
topological operads and most of the other variants are similarly enriched
in other categories. However, the presentation we give here of operads
treats them as a rather straightforward generalization of the notion
of category. It is that viewpoint that quite naturally leads to defining
dendroidal sets to serve as the codomain category for a nerve construction
for operads, extending the usual nerve of categories. 

The path we follow is the following one. We first examine the expressive
power of non-enriched symmetric operads. We find that by considering
operad maps it is possible to classify a wide range of strict algebraic
structures such as associative and commutative monoids and to show that operads
carry a closed monoidal structure that, via the internal hom, internalizes
algebraic structures. We show that the rather trivial fact that algebraic
structures can be transferred along isomorphisms, which we call the
isomorphism invariance property, is a result of symmetric operads
supporting a Quillen model structure compatible with the monoidal
structure. We then turn to the much more challenging homotopy invariance
property for algebraic structures in the presence of homotopy notions.
We show how the theory of operads is used to adequately handle this
more subtle situation, however at a cost. The internalization of these
so-called weak algebras, via an internal hom construction, is lost.
The sequel can be seen as a presentation of the successful attempt
to develop a formalism for weak algebraic structures in which the
internalization of algebras is restored. This formalism is given by
dendroidal sets and a suitable Quillen model structure which can be
used to give a proof of the homotopy invariance property (which is completely
analogous to the case of non-enriched operads). The consequences and
applicability of the added flexibility of dendroidal sets is portrayed
by considering $n$-fold $A_{\infty}$-spaces and weak $n$-categories. 

Section 2 introduces in the first half non-enriched symmetric operads
and presents their basic theory. The second half is concerned with
enriched operads and the Berger-Moerdijk generalization of the Boardman-Vogt
$W$-construction. Section 3 is a parallel development of the ideas
in Section 2. The first half introduces dendroidal sets and presents
their basic theory while the second half is concerned with the homotopy
coherent nerve construction with applications to $A_{\infty}$-spaces
and weak $n$-categories. Section 4 is devoted to the Cisinski-Moerdijk
model structure on dendroidal sets and the way it is used to prove
the homotopy invariance property. Section 5 closes this work with a brief presentation
of a planar dendroidal Dold-Kan correspondence and discusses the yet
unsolved problem of obtaining a satisfactory geometric realization
for dendroidal sets. 
\begin{rem*}
Below we work in a convenient category of topological spaces $Top$.
In some places it is important that this category be closed monoidal, in which
case the category of compactly generated Hausdorff spaces would suffice.
We will not remark about such issues further. 
\end{rem*}

\subsection*{Acknowledgements}
The author wishes to thank the referee for the diligent reading of an earlier version of this work and for the numerous helpful remarks and comments that helped shape the article. 

\section{Operads and algebraic structures}
\begin{rem*}
The reader already familiar with operads who reads this section just
to familiarize herself with the notation is strongly advised to look
at Remark \ref{rem:To-distinguish-between} and  Fact \ref{fac:iso invar}
below. For her convenience the opening paragraph below recounts the
contents of the entire section. 
\end{rem*}
Our journey starts with non-enriched symmetric operads, also known
as symmetric multicategories (originating in Lambek's study of deductive
systems in logic \cite{Lambek multi cat}) or symmetric coloured operads
(e.g., \cite{res colour ope,Leinster higher}). In the literature on operads
these structures are underrepresented probably due to the fact that
the first operads, introduced by May in \cite{May GILS}, were enriched
in topological spaces and many of the most important uses of operads
require enrichment. The point of view of operads we adopt is that
operads generalize categories. Consequently, just as a study of categories
starts with non-enriched categories, with enrichment usually treated
at some later stage, we first present non-enriched symmetric operads.
The operadically versed reader will immediately recognize that our
definition of algebra differs slightly from the standard one. We define the
Boardman-Vogt tensor product of symmetric operads and the notion of
natural transformations for symmetric operads that endows the category
of symmetric operads with the structure of a symmetric closed monoidal category.
We then address the isomorphism invariance property and treat it in
the context of a suitable Quillen model structure on symmetric operads.
We then turn to the much more subtle and interesting case of the homotopy
invariance property and give an expository treatment of the theory
developed by Berger and Moerdijk relevant for the rest of the presentation.

\subsection{Trees}

Symmetric (also called 'non-planar') rooted trees are useful in the
study of symmetric operads. There is no standard definition of 'tree'
that is commonly used (Ginzburg and Kapranov in \cite{GK Kozul duality ope}
use a topological definition while Leinster in \cite{Leinster higher}
uses a combinatorial one) but all approaches are essentially the same.
More recently, Joachim Kock in \cite{Kock poly func} established
a close connection between trees and polynomial functors, offering
yet another formalism of trees while shedding a different light on
the symbiosis between operads and trees. 

We present here the formalism of trees we use and introduce terminology
for commonly occurring trees as well as grafting of trees. We end
the section by presenting a generalization of posets that shows trees
to be analogues of finite linear orders.

\subsubsection{Symmetric rooted trees}
\begin{defn}
A \emph{tree} (short for symmetric rooted tree) is a finite poset
$(T,\le)$ which has a smallest element and such that for each $e\in T$
the set $\{y\in T\mid y\le e\}$ is linearly ordered.
The elements of $T$ are called \emph{edges }and the unique smallest
edge is called the \emph{root}. Part of the information of a tree
is a subset $L=L(T)$ of maximal elements, which are called \emph{leaves}. 
An edge is \emph{outer} if it is either the root or it belongs to $L$, otherwise
it is called \emph{inner}.
\end{defn}
Given edges $e,e'\in T$ we write $e/e'$ if $e'<e$ and if for any $x\in T$ for which $e'\le x\le e$
holds that either $x=e'$ or $x=e$ . For a non-leaf edge $e$ the set $in(e)=\{t\in T\mid t/e\}$
is called the set of \emph{incoming edges} into $e$. For such an edge $e$ the set $v=\{e\}\cup in(e)$
is called the \emph{vertex} above $e$ and we define $in(v)=in(e)$ and $out(v)=e$
which are called, respectively, the set of \emph{incoming edges} and
the \emph{outgoing edge} associated to $v$. The \emph{valence }of
$v$ is equal to $|in(v)|$ and could be $0$. Note that there is no vertex associated to a leaf. We will draw trees
by the graph dual of their Hesse diagrams with the root at the bottom
and will use a $\bullet$ for vertices. For example, in the tree\[
\xymatrix{*{\,}\ar@{-}[dr]_{e} &  & *{\,}\ar@{-}[dl]^{f}\\
\,\ar@{}[r]|{\,\,\,\,\,\,\,\,\,\,\,\,\,\, v} & *{\bullet}\ar@{-}[dr]_{b} &  & *{\,}\ar@{-}[dl]_{c}\ar@{}[r]|{\,\,\,\,\,\,\,\,\,\,\,\, w} & *{\bullet}\ar@{-}[dll]^{d}\\
 &  & *{\bullet}\ar@{-}[d]_{r} & \,\ar@{}[l]^{u\,\,\,\,\,\,\,\,\,\,\,}\\
 &  & *{\,}}
\]
there are three vertices of valence 2,3, and 0 and three leaves $L=\{e,f,c\}$.
The outer edges are $e,f,c$, and $r$, where $r$ is the root. The
inner edges are then $b$ and $d$.

\subsubsection{Some common trees}

The following types of trees appear often enough in the theory of dendroidal
sets to merit their own notation. 
\begin{defn}
For each $n\ge 0$, a tree $L_{n}$ of the form\[
\xymatrix{*{}\ar@{-}[d]\\
*{\bullet}\ar@{..}[d]\\
*{\bullet}\ar@{-}[d]\\
*{\bullet}\ar@{-}[d]\\
*{}}
\]
with one leaf and $n$ vertices, all unary (i.e., each vertex has valence
equal to $1$), will be called a \emph{linear tree of order $n$}. The special case
of the tree $L_{0}$
\end{defn}
\[
\xymatrix{*{}\ar@{-}[d]\\
*{}}
\]
consisting of just one edge and no vertices is called the \emph{unit}
tree. We denote this tree by $\eta$, or $\eta_{e}$ if we wish to
explicitly name its unique edge. In this tree, the only edge is both
the root and a leaf. 
\begin{defn}
For each $n\ge 0$, a tree $C_{n}$ of the form \[
\xymatrix{*{}\ar@{-}[dr] & *{}\ar@{}[d]|{\cdots} & *{}\ar@{-}[dl]\\
 & *{\bullet}\ar@{-}[d]\\
 & *{}}
\]
that has just one vertex and $n$ leaves will be called an \emph{$n$-corolla}.
Note that the case $n=0$ results in a tree different than $\eta$. 
\end{defn}

\subsubsection{Grafting}
\begin{defn}
Let $T$ and $S$ be two trees whose only common edge is the root
$r$ of $S$ which is also one of the leaves of $T$. The \emph{grafting},
$T\circ S$, of $S$ on $T$ along $r$ is the poset $T\cup S$ with
the obvious poset structure and set of leaves equal to $(L(S)\cup L(T))-\{r\}$. 
\end{defn}
Pictorially, the grafted tree $T\circ S$ is obtained by putting the
tree $S$ on top of the tree $T$ by identifying the output edge of
$S$ with the input edge $r$ of $T$. By repeatedly grafting, one
can define a full grafting operation $T\circ(S_{1},\cdots,S_{n})$
in the obvious way. 

We now state a useful decomposition of trees that allows for inductive
proofs on trees. The proof is trivial. 
\begin{prop}
Let $T$ be a tree. Suppose $T$ has root $r$ and $\{r,e_{1},\cdots,e_{n}\}$
is the vertex above $r$. Let $T_{e_{i}}$ be the tree that contains
the edge $e_{i}$ as root and everything above it in $T$. Then \[
T=T_{root}\circ(T_{e_{1}},\cdots,T_{e_{n}})\]
where $T_{root}$ is the n-corolla consisting of $r$ as root and
$\{e_{1},\cdots,e_{n}\}$ as the set of leaves.
\end{prop}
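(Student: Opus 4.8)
The plan is to prove the equality by showing that both sides agree as posets equipped with a distinguished leaf set, since by the definition of grafting the tree $T_{root}\circ(T_{e_1},\dots,T_{e_n})$ is determined entirely by its underlying poset together with its set of leaves. Writing $T_{e_i}=\{x\in T\mid e_i\le x\}$ with the order induced from $T$, I would first record that each $T_{e_i}$ is itself a tree: its smallest element is $e_i$, and for $x\in T_{e_i}$ the down-set $\{y\in T_{e_i}\mid y\le x\}$ is a subset of the linearly ordered set $\{y\in T\mid y\le x\}$, hence linearly ordered. I take the leaf set of $T_{e_i}$ to be $L(T)\cap T_{e_i}$, the natural choice for a subtree.

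The one genuine point is the set-level decomposition $T=\{r\}\sqcup T_{e_1}\sqcup\cdots\sqcup T_{e_n}$. For coverage, given $x\ne r$ the chain $\{y\in T\mid y\le x\}$ runs from the root $r$ up to $x$; its element immediately above $r$ covers $r$, hence equals some $e_i$, so $e_i\le x$ and $x\in T_{e_i}$. For disjointness, if $e_i\le x$ and $e_j\le x$ then $e_i$ and $e_j$ both lie in the linearly ordered set $\{y\mid y\le x\}$ and are therefore comparable; since each covers $r$, comparability forces $e_i=e_j$. This is the only place where the tree axioms (a smallest element together with linearly ordered down-sets) are actually invoked, and I expect it to be the sole nontrivial verification.

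It then remains to match orders and leaves, which is pure bookkeeping. By definition of the iterated grafting, the underlying set of $T_{root}\circ(T_{e_1},\dots,T_{e_n})$ is the corolla on $\{r,e_1,\dots,e_n\}$ glued to each $T_{e_i}$ by identifying the root $e_i$ of $T_{e_i}$ with the $i$-th leaf $e_i$ of the corolla, yielding exactly $\{r\}\cup\bigcup_i T_{e_i}$; its order has $r$ below everything, restricts to the given order on each $T_{e_i}$, and leaves elements of distinct branches incomparable. These relations hold in $T$ as well: that $r$ is the smallest element is the root axiom, the restriction to each $T_{e_i}$ agrees by construction, and for $x\in T_{e_i}$, $y\in T_{e_j}$ with $i\ne j$ comparability would give $e_i\le y$, placing $y$ in $T_{e_i}$ and contradicting disjointness. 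Finally the grafting prescribes $L\bigl(T_{root}\circ(T_{e_1},\dots,T_{e_n})\bigr)=\bigcup_i L(T_{e_i})$, and since $r$ is covered by each $e_i$ and hence is not maximal (so $r\notin L(T)$), we get $L(T)=\bigcup_i\bigl(L(T)\cap T_{e_i}\bigr)=\bigcup_i L(T_{e_i})$. Thus both the posets and the leaf sets coincide, establishing the claimed decomposition.
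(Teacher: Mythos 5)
Your proof is correct: the paper offers no argument at all (it states the proposition with the remark that the proof is trivial), and your argument is exactly the routine verification being alluded to — partition $T$ into $\{r\}$ and the branches $T_{e_i}$ using the covering relation and the linearity of down-sets, then match the poset structure and leaf sets against the definition of grafting. The only caveat is that when some $e_i$ is itself a leaf of $T$ (so $T_{e_i}=\eta$), the paper's literal leaf-set formula $(L(S)\cup L(T))-\{r\}$ for a grafting would delete $e_i$; you are implicitly using the evidently intended reading $(L(T)-\{r\})\cup L(S)$, under which your leaf-set computation is exactly right.
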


\subsubsection{Trees and dendroidally ordered sets}

The trees we defined above are going to be the objects of the category
$\Omega$ whose presheaf category $Set_{\Omega}$ is the category
of dendroidal sets. Recall that the simplicial category $\Delta$
(whose presheaf category is the category of simplicial sets) can be
defined as (a skeleton of) the category of totally ordered finite
sets with order preserving maps. In this section we present an extension
of the notion of totally ordered finite sets closely related to trees.
The content of this section is not used anywhere in the sequel and
is presented for the sake of completeness. Consequently we give no
proofs and refer the reader to \cite{thesis} for more details. 

First we extend the notion of a relation and that of a poset to what
we call broad relation and broad poset. For a set $A$ we denote by
$A^{+}=(A^{+},+,0)$ the free commutative monoid on $A$. A \emph{broad
relation} is a pair $(A,R)$ where $A$ is a set and $R$ is a subset
of $A\times A^{+}$. As is common with ordinary relations, we use
the notation $aR(a_{1}+\cdots+a_{n})$ instead of $(a,(a_{1}+\cdots+a_{n}))\in R$. 
\begin{defn}
A \emph{broad poset} is a broad relation $(A,R)$ satisfying:\end{defn}
\begin{enumerate}
\item Reflexivity: $aRa$ holds for any $a\in A$.
\item Transitivity: For all $a_{0},\cdots,a_{n}\in A$ and $b_{1},\cdots,b_{n}\in A^{+}$ such that $a_{i}Rb_{i}$ for $1\le i\le n$, holds that
if $a_{0}R(a_{1}+\cdots+a_{n})$ then $aR(b_{1}+\cdots+b_{n})$.
\item Anti-symmetry: For all $a_{1},a_{2}\in A$ and $b_{1},b_{2}\in A^{+}$
if $a_{1}R(a_{2}+b_{2})$ and $a_{2}R(a_{1}+b_{1})$ then $a_{1}=a_{2}$. 
\end{enumerate}
When $(A,R)$ is a broad poset we denote $R$ by $\le$. The meaning
of $<$ is then defined in the usual way. 

A \emph{map} of broad posets $f:A\rightarrow B$ is a set function
preserving the broad poset structure, that is if $a\le(a_{1}+\cdots+a_{n})$
then $f(a)\le(f(a_{1})+\cdots+f(a_{n}))$. 
\begin{defn}
We denote by $BrdPoset$ the category of all broad posets and their
maps. 
\end{defn}
Let $\star$ be a singleton set $\{*\}$ with the broad poset structure
given by $*\le*$. Note that $\star$ is not a terminal object in
$BrdPoset$. 
\begin{lem}
(Slicing lemma for broad posets) There is an isomorphism of categories
between $BrdPoset/\star$ and the category $Poset$ of posets and
order preserving maps. Moreover, along this isomorphism one obtains
a functor $k_{!}:Poset\to BrdPoset$ which has a right adjoint $k^{*}:BrdPoset\to Poset$
which itself has a right adjoint $k_{*}:Poset\to BrdPoset$. 
\end{lem}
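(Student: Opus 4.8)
The plan is to treat the two assertions separately: first the slice isomorphism, then the adjoint triple.

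For the isomorphism $BrdPoset/\star\cong Poset$ I would unwind what it means for a broad poset $(A,R)$ to admit a map to $\star$. Since the only relation in $\star$ is the reflexive one $*\le *$, a structure-preserving $f\colon A\to\star$ can exist only when every relation $a R(a_1+\cdots+a_n)$ of $A$ has $n=1$; conversely any broad poset all of whose relations are unary admits the (unique) constant map to $\star$. Thus objects of $BrdPoset/\star$ are exactly the broad posets whose relation is concentrated in $A\times A\subseteq A\times A^{+}$. Reading the three broad-poset axioms in this unary case, reflexivity, transitivity and anti-symmetry become precisely the poset axioms, so such objects are exactly posets; and a morphism in the slice is a broad map between unary broad posets, which is just an order-preserving map. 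This gives a bijection on objects and on hom-sets, functorial in the evident way, hence an isomorphism of categories.

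For the first adjunction I would define $k_{!}$ as the composite $Poset\cong BrdPoset/\star\to BrdPoset$ with the domain (forgetful) functor of the slice; concretely $k_{!}$ sends a poset to the broad poset with the same underlying set and the same relation, now regarded as unary. Since the slice projection is fully faithful (maps to $\star$ being unique), $k_{!}$ is fully faithful. To produce its right adjoint I would compute $\mathrm{Hom}_{BrdPoset}(k_{!}P,B)$: a broad map out of a unary broad poset is a set map $P\to B$ sending each relation $p\le p'$ to a unary relation of $B$, i.e.\ an order-preserving map into the sub-poset of unary relations of $B$. Hence setting $k^{*}B$ to be this unary part (equivalently $B\times\star$ transported across the slice isomorphism) yields $\mathrm{Hom}_{BrdPoset}(k_{!}P,B)\cong\mathrm{Hom}_{Poset}(P,k^{*}B)$, naturally, and $k^{*}k_{!}\cong\mathrm{id}$.

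For the remaining adjoint I would try to build $k_{*}$ as the \emph{cofree} broad poset on a poset. Evaluating the desired adjunction at $\star$ shows the underlying set of $k_{*}P$ must be $P$, so the task is to equip $P$ with the largest broad relation whose associated unary part is the given order and which still satisfies the three axioms, and then to check that broad maps $B\to k_{*}P$ correspond to order-preserving maps $k^{*}B\to P$. Abstractly I would expect existence to follow from the special adjoint functor theorem, once one verifies that $BrdPoset$ and $Poset$ are locally presentable and that $k^{*}$ preserves all colimits. The hard part will be the interaction between the cofree relation and anti-symmetry: enlarging the relation so as to absorb the non-unary relations of an arbitrary source tends to create pairs $x\le(y+\cdots)$ and $y\le(x+\cdots)$ that anti-symmetry forbids, so the delicate point is to pin down exactly which composite relations $x\le(y_{1}+\cdots+y_{n})$ may be declared to hold in $k_{*}P$ without violating anti-symmetry, while remaining large enough to force every order-preserving map on unary parts to extend to a broad map. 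Establishing this compatibility, equivalently the cocontinuity of $k^{*}$, is where I would concentrate the effort.
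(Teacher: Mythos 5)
Your handling of the first two assertions is correct and complete: a broad poset admits a (necessarily unique) map to $\star$ exactly when all of its relations are unary, the three axioms then literally become the poset axioms, and a broad map out of a unary broad poset is precisely an order-preserving map into the unary part of its target; this also identifies $k^{*}$ with $(-)\times\star$, so your argument is the concrete instantiation of the abstract framework recorded in Remark \ref{rem:Slicing}. There is in fact nothing to compare this against, since the paper gives no proof of this lemma at all and defers to \cite{thesis}; the only internal benchmark is the parallel slicing lemma for symmetric operads, and your first two steps match its proof exactly.

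The genuine gap is the third functor $k_{*}$: what you write there is a plan rather than a proof, and the plan cannot be carried out, because with the definitions as printed in the paper $k^{*}$ does not preserve colimits and $k_{*}$ does not exist, so no appeal to local presentability or an adjoint functor theorem can close the gap. To see this, suppose $k_{*}P$ exists for a poset $P$ containing two distinct elements $p\ne q$. By your own evaluation-at-$\star$ argument the underlying set of $k_{*}P$ is identified with that of $P$, and naturality with respect to maps $\star\to B$ shows that, under this identification, the adjoint transpose of an order-preserving $f:k^{*}B\to P$ is a broad map $B\to k_{*}P$ whose underlying function is $f$. Now take $B_{1}=\{x,y,u\}$ whose only non-reflexive relation is $x\le(y+u)$ (all three axioms hold), and $f_{1}:x\mapsto p,\ y\mapsto q,\ u\mapsto q$, which is order-preserving since $k^{*}B_{1}$ is discrete; its transpose forces the relation $p\le(q+q)$ in $k_{*}P$. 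Symmetrically, $B_{2}=\{x',y',u'\}$ with sole relation $y'\le(x'+u')$ and the map $x'\mapsto p,\ y'\mapsto q,\ u'\mapsto p$ forces $q\le(p+p)$. The paper's anti-symmetry axiom, which quantifies over arbitrary $b_{1},b_{2}\in A^{+}$ and not only empty ones, now gives $p=q$, a contradiction. So your instinct that anti-symmetry is the crux was right, but it is not a technical hurdle to be negotiated: it is an actual obstruction, and verifying "cocontinuity of $k^{*}$" would end in a disproof. (Independently, relations of the form $a\le 0$, which the free commutative monoid $A^{+}$ permits, already make $k^{*}$ fail to preserve pushouts, by the very mechanism that forces the paper to restrict $j^{*}$ to \emph{reduced} operads in the operadic slicing lemma.) A correct treatment must therefore modify the setting rather than push harder on the adjoint functor theorem: impose anti-symmetry only on unary relations and exclude empty-sum relations, after which $k_{*}P$ is simply $P$ with every relation of arity at least two adjoined, and the adjunction becomes immediate, exactly as for $j_{*}$ on reduced operads.
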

As motivation for the following definition recall that a finite ordinary
poset $A$ is linearly ordered if, and only if, it has a smallest
element and for every $a\in A$ the set $a_{\uparrow}=\{x\in A\mid a<x\}$
is either empty or has a smallest element. 
\begin{defn}
A finite broad poset $A$ is called \emph{dendroidally ordered} if
\end{defn}
\begin{enumerate}
\item There is an element $r\in A$ such that for every $a\in A$ there
is $b\in A^{+}$ such that $r\le a+b$. 
\item For every $a\in A$ the set $a_{\uparrow}=\{b\in A^{+}\mid a<b\}$
is either empty or it contains an element $s(a)=a_{1}+\cdots+a_{n}$
such that every $b\in a_{\uparrow}$ can be written as $b=b_{1}+\cdots+b_{n}$
with $a_{i}\le b_{i}$ for all $1\le i\le n$. 
\item For every $a_{0},\cdots,a_{n}\in A$, if $a_{0}\le a_{1}+\cdots+a_{n}$
then for $i\ne j$ there holds $a_{i}\ne a_{j}$. 
\end{enumerate}
Trees are related to finite dendroidally ordered sets as follows. Given a tree
$T$ define a broad relation on the set $E(T)$ of edges by declaring
$e\le e_{1}+\cdots+e_{n}$ precisely when there is a vertex $v$ such
that $in(v)=\{e_{1},\cdots,e_{n}\}$ (without repetitions) and $out(v)=e$.
The transitive closure of this broad relation is then a dendroidally
ordered set. This constructions can be used to give an equivalence
of categories between the full subcategory $DenOrd$ of $BrdPoset$
spanned by the dendroidally ordered sets and the dendroidal category $\Omega$
defined below. It is easily seen that $DenOrd$, upon slicing over
$\star$, is isomorphic to the category of all finite linearly ordered
sets and order preserving maps.

\subsection{Operads and algebras}

We now present symmetric operads viewed as a generalization of categories
where arrows are allowed to have domains of arity $n$ for any $n\in\mathbb{N}$.
We then define the notion of $\mathcal{P}$-algebras for a symmetric
operad $\mathcal{P}$ which are often referred to as the raison d'\^etre
of operads. We deviate here from the common definition of algebras
noting that our definition encompasses the standard one. We define
an algebra to simply be a morphism between symmetric operads, the
difference being purely syntactic. The assertion that symmetric operads
exist in order to define algebras thus agrees with the idea that in
any category the objects' raison d'\^etre is to serve as domains and
codomains of arrows. 
\begin{defn}
A \emph{planar operad} $\mathcal{P}$ consists of a class $\mathcal{P}_{0}$
whose elements are called the \emph{objects} of $\mathcal{P}$ and
to each sequence $P_{0},\cdots,P_{n}\in\mathcal{P}_{0}$ a set $\mathcal{P}(P_{1},\cdots,P_{n};P_{0})$
whose elements are called \emph{arrows} depicted by $\psi:(P_{1},\cdots,P_{n})\rightarrow P_{0}$\emph{.}
With this notation $(P_{1},\cdots,P_{n})$ is the \emph{domain} of
$\psi$, $P_{0}$ its \emph{codomain}, and $n$ its \emph{arity} (which
is allowed to be $0$). The domain and codomain are assumed to be
uniquely determined by $\psi$. There is for each object $P\in\mathcal{P}_{0}$
a chosen arrow $id_{P}:P\rightarrow P$ called the \emph{identity
}at $P$. There is a specified composition rule: Given $\psi_{i}:(P_{1}^{i},\cdots,P_{m_{i}}^{i})\rightarrow P_{i}$,
$1\le i\le n$, and an arrow $\psi:(P_{1},\cdots,P_{n})\rightarrow P_{0}$
their composition is denoted by $\psi\circ(\psi_{1},\cdots,\psi_{n})$
and has domain $(P_{1}^{1},\cdots,P_{m_{1}}^{1},P_{1}^{2},\cdots,P_{m_{2}}^{2},\cdots,P_{1}^{n},\cdots,P_{m_{n}}^{n})$
and codomain $P_{0}$. The composition is to obey the following unit
and associativity laws:\end{defn}
\begin{itemize}
\item Left unit axiom: $id_{P}\circ\psi=\psi$
\item Right unit axiom: $\psi\circ(id_{P_{1}},\cdots,id_{P_{n}})=\psi$
\item Associativity axiom: the composition \[
\psi\circ(\psi_{1}\circ(\psi_{1}^{1},\cdots,\psi_{m_{1}}^{1}),\cdots,\psi_{n}\circ(\psi_{1}^{n},\cdots,\psi_{m_{n}}^{n}))\]
is equal to\[
(\psi\circ(\psi_{1},\cdots,\psi_{n}))\circ(\psi_{1}^{1},\cdots,\psi_{m_{1}}^{1},\cdots,\psi_{1}^{n},\cdots,\psi_{m_{n}}^{n}).\]

\end{itemize}
The morphisms of planar operads are the obvious structure preserving
maps. A map of operads will also be referred to as a \emph{functor}. 
\begin{defn}
A \emph{symmetric operad} is a planar operad $\mathcal{P}$ together with actions of the symmetric groups in the following sense: for each $n\in\mathbb{N}$, objects $P_{0},\cdots,P_{n}\in\mathcal{P}_{0}$,
and a permutation $\sigma\in\Sigma_{n}$ a function $\sigma^{*}:\mathcal{P}(P_{1},\cdots,P_{n};P_{0})\to\mathcal{P}(P_{\sigma(1)},\cdots,P_{\sigma(n)};P_{0})$.
We write $\sigma^{*}(\psi)$ for the value of the action of $\sigma$ on  $\psi:(P_{1},\cdots,P_{n})\to P_{0}$ and demand that for any two permutations
$\sigma,\tau\in\Sigma_{n}$ there holds $(\sigma\tau)^{*}(\psi)=\tau^{*}\sigma^{*}(\psi)$.
Moreover, these actions of the permutation groups are to be compatible
with compositions in the obvious sense (see \cite{Leinster higher,May GILS}
for more details). \emph{Functors} of symmetric operads $\mathcal{P}\to\mathcal{Q}$
are functors of the underlying planar operads that respect the actions of the symmetric
groups.
\end{defn}
When dealing with operads we make a distinction between small and
large ones according to whether the class of objects is, respectively,
a set or a proper class. If more care is needed and size issues become
important we implicitly assume working in the formalism of Grothendieck
universes (\cite{Groth univers}) similarly to the way such issues
are avoided in category theory. We now obtain the category $Ope_{\pi}$
of small planar operads and their functors as well as the category
$Ope$ of small symmetric operads and their functors. There is clearly
a forgetful functor $Ope\to Ope_{\pi}$ which has an easily constructed
left adjoint $S:Ope_{\pi}\to Ope$ called the \emph{symmetrization
}functor. 
\begin{rem}
We note that our symmetric operads are also called symmetric multicategories
(see e.g., \cite{Leinster higher}) as well as symmetric coloured
operads. The composition as given above is sometimes called full $\circ$
composition. Using the identities in an operad we can then define what is known
as the $\circ_{i}$ composition as follows. Given an arrow $\psi$
of arity $n$ and $1\le i\le n$ one can compose an arrow $\varphi$
onto the $i$-th place of the domain of $\psi$, provided the object
at the $i$-th place is equal to the codomain of $\varphi$, by means
of $\psi\circ_{i}\varphi=\psi\circ(id,\cdots,id,\varphi,id,\cdots id)$
with $\varphi$ appearing in the $i$-th place. Some authors consider
operads defined in terms of the $\circ_{i}$ operations rather than
the full $\circ$ composition. In the presence of identities there
is no essential difference but if identities are not assumed than
one obtains a slightly weaker structure called a \emph{pseudo-operad
}(see \cite{MSS book})\emph{.} The operads we consider always have
identities so that the full $\circ$ and partial $\circ_{i}$ compositions
differ only cosmetically and will be used interchangeably as convenient. 
\end{rem}
Operads are closely related to categories. Indeed, one trivially sees
that a category is an operad where each arrow has arity equal
to $1$. 

A slightly less trivial and more useful fact is the following. Call a symmetric operad \emph{reduced} if it has no $0$-ary operations. We
denote by $\star$ an operad with one object and only the identity
arrow on it. 
\begin{lem}
(Slicing lemma for symmetric operads) There is an isomorphism between
the category $Cat$ of small categories and the slice category $Ope/\star$.
Moreover, there are functors $j_{!}:Cat\to Ope$ and $j^{*}:Ope\to Cat$
such that $j^{*}$ is right adjoint to $j_{!}$. The functor $j^{*}$ does not preserve pushouts and thus does not have a right adjoint. However, the restriction of $j_{*}$ to the subcategory of reduced operads does have a right adjoint. Under the isomorphism $Cat\cong Ope/\star$ the
functor $j_{!}$ is the forgetful functor $Cat=Ope/\star\to Ope$. \end{lem}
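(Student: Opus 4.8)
The plan is to unwind the claimed isomorphism $Cat\cong Ope/\star$ explicitly and then build the adjoints by hand. The key observation is that an object of $Ope/\star$ is a symmetric operad $\mathcal{P}$ together with a functor $\mathcal{P}\to\star$. Since $\star$ has a single object and only its identity arrow, such a functor sends every object of $\mathcal{P}$ to the unique object of $\star$, and every arrow of $\mathcal{P}$ to the identity; but the identity of $\star$ is a \emph{unary} arrow, so for the map to respect arities every arrow of $\mathcal{P}$ must have arity $1$. Conversely, a symmetric operad all of whose arrows are unary carries a unique functor to $\star$. As already noted in the excerpt, an operad in which every arrow has arity $1$ is precisely a category: the symmetric group actions are trivial (only $\Sigma_1$ acts, trivially), the full $\circ$-composition reduces to ordinary composition of arrows, and the unit and associativity axioms become exactly the category axioms. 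So the first step is to make this correspondence functorial in both directions and check it is an isomorphism of categories, not merely an equivalence --- this is where one verifies that morphisms over $\star$ correspond bijectively to functors between the associated categories.

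Second, I would construct $j_!:Cat\to Ope$ as the composite of the isomorphism $Cat\cong Ope/\star$ with the forgetful functor $Ope/\star\to Ope$ that discards the structure map to $\star$; this gives the stated description of $j_!$ for free. Explicitly, $j_!$ sends a category $\mathcal{C}$ to the operad with the same objects whose only arrows are the unary ones coming from the hom-sets of $\mathcal{C}$, with no higher-arity or nullary operations. The right adjoint $j^*:Ope\to Cat$ is then the expected ``underlying category'' functor: given an operad $\mathcal{P}$, let $j^*\mathcal{P}$ have the same objects and declare its hom-set $j^*\mathcal{P}(P,Q)$ to be the set of unary arrows $\mathcal{P}(P;Q)$, with composition and identities inherited from $\mathcal{P}$. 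To establish $j_!\dashv j^*$ one exhibits a natural bijection $Ope(j_!\mathcal{C},\mathcal{P})\cong Cat(\mathcal{C},j^*\mathcal{P})$; since $j_!\mathcal{C}$ has \emph{only} unary arrows, an operad functor out of it is determined entirely by its effect on objects and on unary arrows, which is exactly the data of an ordinary functor $\mathcal{C}\to j^*\mathcal{P}$. The naturality is routine.

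Third, for the negative assertion that $j^*$ does not preserve pushouts --- hence by the adjoint functor theorem cannot admit a right adjoint, since a right adjoint would force $j^*$ to preserve all colimits --- I would produce an explicit pushout in $Ope$ whose image under $j^*$ fails to be a pushout in $Cat$. The natural candidate exploits the gap between arity-$1$ data and higher-arity data: take a pushout that glues together operads so as to \emph{create} a new binary operation in the pushout, for instance by amalgamating two copies of a unary arrow along a common object so that multi-ary composites in the pushed-out operad contribute unary arrows to $j^*$ of the pushout that are not present in the pushout of the underlying categories. Concretely one can use corollas $C_n$ and unit trees $\eta$ to build the diagram; the point is that in $Ope$ a pushout can produce unary composites built from higher-arity generators, which the categorical pushout cannot see. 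This counterexample is the main obstacle, since it requires a careful choice of diagram where the two pushouts genuinely disagree rather than merely being presented differently.

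Finally, for the positive counterpart --- that the restriction of $j^*$ (written $j_*$ in the statement) to reduced operads does admit a right adjoint --- I would invoke that on reduced operads the absence of nullary operations removes the pathology responsible for the pushout failure, so that the restricted functor becomes cocontinuous and one can appeal to the adjoint functor theorem for locally presentable categories (both $Cat$ and the category of small reduced operads are locally presentable, and a cocontinuous functor between locally presentable categories has a right adjoint). The work here is to verify cocontinuity of the restricted functor, for which the Proposition on tree decomposition, $T=T_{root}\circ(T_{e_1},\cdots,T_{e_n})$, provides the inductive handle needed to analyze how colimits of reduced operads are built from corollas. I expect the pushout counterexample of the third step to be the genuinely delicate part; the two adjunctions reduce to formal manipulations once the defining correspondence is nailed down.
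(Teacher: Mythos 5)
Your first two steps coincide with the paper's proof: the paper constructs $j_!$ and $j^*$ exactly as you do (hom-sets of $j_!\mathcal{C}$ empty in arities $\ne 1$, unary arrows for $j^*\mathcal{P}$) and, like you, leaves the verification of $Cat\cong Ope/\star$ and of the adjunction as routine; your arity argument for why a functor $\mathcal{P}\to\star$ forces every arrow of $\mathcal{P}$ to be unary is the right justification.

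The genuine gap is in your step 3. The recipe you give --- amalgamating two copies of a unary arrow along a common object --- produces a pushout that $j^*$ \emph{does} preserve: gluing the free operads on $f\colon a\to b$ and $g\colon b\to c$ along $\eta_b$ yields the free operad on the composable pair, whose unary arrows are exactly the identities, $f$, $g$, and $gf$, all of which the pushout in $Cat$ also contains. The obstruction is arity bookkeeping: a composite $\psi\circ(\psi_1,\dots,\psi_n)$ has arity equal to the sum of the arities of the $\psi_i$, so as long as all operations have arity $\ge 1$, a unary composite can only be built from unary arrows, and the unary part of a pushout is the pushout of the unary parts. (This is precisely why the restriction to \emph{reduced} operads is well behaved, as you correctly observe in your last step --- but it also kills your proposed counterexample.) The counterexample must involve a nullary operation grafted into a higher-arity one: take $\Omega(C_2)\leftarrow\star\to\Omega(C_0)$, where $\star=\Omega(\eta)$ maps to a leaf $b$ of the binary corolla and to the root of the nullary corolla. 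The pushout in $Ope$ is the operad of the grafted tree, which contains the unary composite $(a)\to c$ obtained by plugging the nullary arrow into the second input of the binary one; but $j^*$ of each of the three operads is a discrete category, so the pushout in $Cat$ is discrete. With that repair your argument goes through.

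For the final part you take a genuinely different route from the paper. The paper does not invoke the adjoint functor theorem; it simply writes down $j_*$ explicitly: $j_*(\mathcal{C})$ has the arrows of $\mathcal{C}$ as its unary arrows and exactly one arrow $(P_1,\dots,P_n;P_0)$ in every arity $n\ne 1$, so that any composition involving a non-unary arrow is forced and associativity is automatic. Your approach --- check that restricted $j^*$ is cocontinuous and apply the adjoint functor theorem for locally presentable categories --- is viable, but note that the cocontinuity check rests on exactly the same arity-additivity fact isolated above, and once you have that fact the explicit formula for $j_*$ costs nothing and is more informative (it is what lets the paper identify $Comm=j_*(*)$ as the terminal operad a few lines later). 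The tree-decomposition proposition you cite is not really needed for either route.
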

\begin{proof}
We explicitly describe the functors, omitting any details. Given a
category $\mathcal{C}$ the operad $j_{!}(\mathcal{C})$ has $j_{!}(\mathcal{C})_{0}=\mathcal{C}_{0}$ (here $\mathcal{C}_{0}$ stands for the class of objects of the category $\mathcal{C}$)
and the arrows in $j_{!}(\mathcal{C})$ are given for $P_{0},\cdots,P_{n}\in j_{!}(\mathcal{C})_{0}$
as follows:\[
j_{!}(\mathcal{C})(P_{1},\cdots,P_{n};P_{0})=\begin{cases}
\mathcal{C}(P_{1},P_{0}) & \mbox{if }n=1\\
\emptyset & \mbox{if }n\ne1\end{cases}\]
The composition is the same as in $\mathcal{C}$. The right adjoint
$j^{*}$ is given for an operad $\mathcal{P}$ as follows. $j^{*}(\mathcal{P})_{0}=\mathcal{P}_{0}$
and the arrows in $j^{*}(\mathcal{P})$ are given for $C,D\in j^{*}(\mathcal{P})_{0}$
by:\[
j^{*}(\mathcal{P})(C,D)=\mathcal{P}(C;D).\]
The composition is the same as in $\mathcal{P}$. Finally, the functor
$j_{*}$, right adjoint to the restriction of $j^{*}$ to reduced operads, is defined for a category $\mathcal{C}$ as follows.
$j_{*}(\mathcal{C})_{0}=\mathcal{C}_{0}$ and the arrows in $j_{*}(\mathcal{C})$
are given for $P_{0},\cdots,P_{n}\in j_{*}(\mathcal{C})$ as follows:\[
j_{*}(\mathcal{C})(P_{1},\cdots,P_{n};P_{0})=\begin{cases}
\mathcal{C}(P_{1},P_{0}) & \mbox{if }n=1\\
\{(P_{1},\cdots,P_{n};P_{0})\} & \text{if }n\ne1\end{cases}\]
Composition of unary arrows is given as in $\mathcal{C}$. Composition
of two arrows at least one of which is not unary is uniquely determined
since the hom set of where that arrow is to be found consists of just
one object. It is therefore automatic that the composition so defined
is associative. \end{proof}
\begin{rem}
\label{rem:Slicing}The construction of the three functors above follows
from general abstract nonsense and is related to locally cartesian
closed categories. Indeed, if  $\mathcal{C}$ is a category with a terminal
object $*$, then for any object $A\in\mathcal{C}_{0}$ the unique arrow
$A\to*$ induces a functor between the slice categories $F_{!}:\mathcal{C}/A\to\mathcal{C}/*$.
It is then a general result that $F_{!}$ has a right adjoint $F^{*}$
if, and only if, $\mathcal{C}$ admits products with $A$. Moreover,
$F^{*}$ has a right adjoint $F_{*}$ if, and only if, $A$ is exponentiable
in $\mathcal{C}$. The case we had at hand is when $\mathcal{C}$ is the category of symmetric operads, or its subcategory of reduced symmetric operads, 
and $A=\star$. 
\end{rem}
Due to this intimate connection between symmetric operads and categories
we will employ category theoretic terminology in the context of symmetric
operads. For example, we will refer to morphisms of operads as functors,
and feel free to use category theoretic terminology within the 'category
part' $j^{*}(\mathcal{P})$ of an operad $\mathcal{P}$. So the notion
of a unary arrow $f$ in $\mathcal{P}$ being, for instance, an isomorphism,
a monomorphism, or a split idempotent simply means that $f$ has the
same property in the category $j^{*}(\mathcal{P})$. In this spirit
we give the following definition of equivalence of operads. 
\begin{defn}
Let $\mathcal{P}$ and $\mathcal{Q}$ be symmetric operads and $F:\mathcal{P}\to\mathcal{Q}$
a functor. We say that $F$ is an \emph{equivalence of operads
}if $F$ is fully faithful (which means that it is bijective
on each hom-set) and essentially surjective (which means that $j^{*}(F)$
is an essentially surjective functor of categories). \end{defn}
\begin{rem}
We make a few remarks to emphasize differences and similarities between
the categories $Ope$ and $Cat$:\end{rem}
\begin{itemize}
\item $Ope$ is small complete and small cocomplete. 
\item There is a unique initial operad which is, of course, equal to $j_{!}(\emptyset)$. 
\item For the operad $\star$ above and a terminal category $*$ there holds that
$\star\cong j_{!}(*)$ and $*\cong j^{*}(\star)$. 
\item $\star$ is not terminal but is exponentiable in the category of reduced symmetric operads.
\item The terminal object in $Ope$ is the operad $Comm=j_{*}(*)$ consisting
of one object and one $n$-ary operation for every $n\in\mathbb{N}$. 
\item The subobjects of the terminal operad $Comm$ are all of the following
form. An operad with one object and for every $n\ge0$ at most
one arrow of arity $n$ such that if an arrow of arity $m$ and an
arrow of arity $k$ exist then there is also an arrow of arity $m+k-1$. 
\end{itemize}
A typical example of category is obtained by fixing some mathematical
object and considering the totality of those objects and their naturally
occurring morphisms. In many cases these objects also have a notion
of 'morphism of several variables' in which case the totality of objects
and their multivariable arrows will actually form an operad. One case in which this
is guaranteed is the following.
\begin{lem}
\label{lem:mon cat as ope}Let $(\mathcal{E},\otimes,I)$ be a symmetric
monoidal category and consider for every $x_{0},\cdots,x_{n}\in\mathcal{E}_{0}$
the set $\hat{\mathcal{E}}(x_{1},\cdots,x_{n};x_{0})=\mathcal{E}(x_{1}\otimes\cdots\otimes x_{n},x_{0})$.
With the obvious definitions of composition and identities this construction
defines a symmetric operad $\hat{\mathcal{E}}$ with $(\hat{\mathcal{E}})_{0}=\mathcal{E}_{0}$.\end{lem}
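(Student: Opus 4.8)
The plan is to construct the operad $\hat{\mathcal{E}}$ explicitly from the data of the symmetric monoidal category $(\mathcal{E}, \otimes, I)$ and then verify that it satisfies all the operad axioms, the verifications being essentially bookkeeping that translates the coherence of the monoidal structure into the operad laws. First I would set the objects $(\hat{\mathcal{E}})_0 = \mathcal{E}_0$ and, as prescribed, take the arrow sets to be $\hat{\mathcal{E}}(x_1, \dots, x_n; x_0) = \mathcal{E}(x_1 \otimes \cdots \otimes x_n, x_0)$, where for $n = 0$ the empty tensor product is interpreted as the unit $I$, so that nullary operations $() \to x_0$ are maps $I \to x_0$ in $\mathcal{E}$. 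The identity $\mathrm{id}_x \colon x \to x$ in $\hat{\mathcal{E}}$ is the identity $x \to x$ in $\mathcal{E}$ (a unary arrow).

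Next I would define composition. Given $\psi \colon (x_1, \dots, x_n) \to x_0$, i.e.\ a map $x_1 \otimes \cdots \otimes x_n \to x_0$, and arrows $\psi_i \colon (x_1^i, \dots, x_{m_i}^i) \to x_i$, i.e.\ maps $x_1^i \otimes \cdots \otimes x_{m_i}^i \to x_i$, the composite $\psi \circ (\psi_1, \dots, \psi_n)$ is the map
\[
x_1^1 \otimes \cdots \otimes x_{m_n}^n \xrightarrow{\psi_1 \otimes \cdots \otimes \psi_n} x_1 \otimes \cdots \otimes x_n \xrightarrow{\psi} x_0,
\]
where one silently inserts the canonical associativity isomorphisms of $\mathcal{E}$ to regard the fully-bracketed tensor product as matching the domain of $\psi_1 \otimes \cdots \otimes \psi_n$. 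The symmetric group actions come from the symmetry: for $\sigma \in \Sigma_n$, the map $\sigma^*$ sends $\psi \colon x_1 \otimes \cdots \otimes x_n \to x_0$ to its precomposition with the canonical symmetry isomorphism $x_{\sigma(1)} \otimes \cdots \otimes x_{\sigma(n)} \xrightarrow{\cong} x_1 \otimes \cdots \otimes x_n$ induced by $\sigma$.

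Then I would check the axioms in turn. The unit axioms are immediate once one uses the unit isomorphisms $\lambda, \rho$ of the monoidal structure together with naturality. The associativity axiom for the operad composition unwinds, after expanding both sides as composites in $\mathcal{E}$, into an equality of two bracketings of an iterated tensor product followed by $\psi$; this equality holds precisely because the associativity constraint of $\mathcal{E}$ satisfies Mac Lane's pentagon coherence, so any two ways of reassociating agree. Likewise, the compatibility of the $\Sigma_n$-actions with composition, and the relation $(\sigma\tau)^* = \tau^* \sigma^*$, follow from the hexagon coherence and the naturality of the symmetry.

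The main obstacle is purely a matter of coherence bookkeeping rather than of genuine difficulty: since $\mathcal{E}$ is only \emph{monoidal} and not \emph{strict} monoidal, the iterated tensor products appearing in the domains are well-defined only up to the associativity and unit isomorphisms, and one must be careful that every instance of the operad axioms corresponds to a well-formed diagram whose commutativity is guaranteed by Mac Lane's coherence theorem. For this reason the cleanest route is to invoke the coherence theorem at the outset to replace $\mathcal{E}$ by an equivalent strict monoidal category, carry out the construction there where all the required diagrams commute on the nose, and transport the result back; this reduces every verification to a triviality and is why the statement can be asserted with ``the obvious definitions.'' I would therefore only indicate these points and leave the routine diagram-chasing to the reader.
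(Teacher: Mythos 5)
Your proposal is correct and follows essentially the same route as the paper, whose entire proof is the single observation that associativity of composition in $\hat{\mathcal{E}}$ follows from the coherence of the monoidal structure on $\mathcal{E}$. Your write-up simply makes explicit what the paper leaves implicit (the definitions of composition, identities, and the $\Sigma_n$-actions, and the reduction of every axiom to Mac Lane's coherence theorem, optionally via strictification), so there is no substantive difference in method.
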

\begin{proof}
The associativity of the composition in $\hat{\mathcal{E}}$ is a
result of the coherence in $\mathcal{E}$.\end{proof}
\begin{rem}
Certainly not every symmetric operad is obtained in that way from
a symmetric monoidal category (e.g., any of the proper subobjects
of $Comm$ or any operad of the form $j_{!}(\mathcal{C})$ for a category
$\mathcal{C}$). It is possible to internally characterize those symmetric
operads that do arise in that way from symmetric monoidal categories, as is explained in detail in \cite{Hermida rep mult cat} and indicated
in \cite{Leinster higher}. 
\end{rem}
Another type of category that arises naturally is one that encodes
some properties of arrows abstractly. For example, the free-living
isomorphism $0\leftrightarrows 1$ is a category with two distinct
objects and, except for the two identities, two other arrows between
the objects, each of which is the inverse of the other. A functor
from the free-living isomorphism to any category $\mathcal{C}$ corresponds
exactly to a choice of an isomorphism in $\mathcal{C}$ and can be
seen as the abstract free-living isomorphism becoming concrete in
the category $\mathcal{C}$. A similar phenomenon is true in operads, where one readily sees the much greater expressive
power of operads compared to categories. Consider for example the
terminal operad $Comm$, for which it is straightforward to prove that
any functor of operads $Comm\to\hat{\mathcal{E}}$ is the same as
a commutative monoid in $\mathcal{E}$, for any symmetric monoidal
category $\mathcal{E}$. There is no category $\mathcal{C}$ with
the property that functors $\mathcal{C}\to\mathcal{E}$ correspond
to commutative monoids in $\mathcal{E}$. 
\begin{rem}
\label{rem:To-distinguish-between}To distinguish between symmetric
operads such as $Comm$ thought of as encoding properties of arrows
and symmetric operads such as $\hat{\mathcal{E}}$ thought of as environments
where operads $\mathcal{P}$ are interpreted concretely we will use
letters near $\mathcal{P}$ for abstract symmetric operads and letters
near $\mathcal{E}$ for symmetric operads as environments (whether
they come from a symmetric monoidal category or not). We will also
call symmetric operads $\mathcal{E}$ \emph{environment operads}.
The distinction is purely syntactic. 
\end{rem}
The utility of operads is in their ability to codify quite a wide
range of algebraic structures in the way described above. The usual
terminology one uses is that of an \emph{algebra }of an operad. The
following definition of algebra is more general than the usual one
(e.g., \cite{MSS book,May GILS}). 
\begin{defn}
Let $\mathcal{P}$ and $\mathcal{E}$ be symmetric operads and consider
a functor $F:\mathcal{P}\to\mathcal{E}$. If $F_{0}:\mathcal{P}_{0}\to\mathcal{E}_{0}$
is the object part of the functor $F$ we say that $F$ is a $\mathcal{P}$-\emph{algebra}
structure on the collection of objects $\{F_{0}(P)\}_{P\in\mathcal{P}_{0}}$
in the environment operad $\mathcal{E}$.
\end{defn}
Many basic properties of $\mathcal{P}$-algebras are captured efficiently
by the introduction of a closed monoidal structure on $Ope$. The
appropriate tensor product of symmetric operads is the Boardman-Vogt
tensor product which was first introduced in \cite{BV book} for (certain
structures that are essentially equivalent to) symmetric operads enriched
in topological spaces. The construction is general enough that it
can be performed for operads enriched in other monoidal categories
and certainly also in the non-enriched case, which is the version
we give now. 
\begin{defn}
Let $\mathcal{P}$ and $\mathcal{Q}$ be two symmetric operads. Their
\emph{Boardman-Vogt tensor product} is the symmetric operad $\mathcal{P}\otimes\mathcal{Q}$
with $(\mathcal{P}\otimes\mathcal{Q})_{0}=\mathcal{P}_{0}\times\mathcal{Q}_{0}$
given in terms of generators and relations as follows. For each $Q\in\mathcal{Q}_{0}$
and each operation $\psi\in\mathcal{P}(P_{1},\cdots,P_{n};P)$ there
is a generator $\psi\otimes Q$ with domain $(P_{1},Q),\cdots,(P_{n},Q)$
and codomain $(P,Q)$. For each $P\in\mathcal{P}_{0}$ and an operation
$\varphi\in\mathcal{Q}(Q_{1},\cdots,Q_{m};Q)$ there is a generator
$P\otimes\varphi$ with domain $(P,Q_{1}),\cdots,(P,Q{}_{m})$ and
codomain $(P,Q)$. There are five types of relations among the arrows ($\sigma$ and $\tau$ below are permutations whose roles are explained below):

1) $(\psi\otimes Q)\circ((\psi_{1}\otimes Q),\cdots,(\psi_{n}\otimes Q))=(\psi\circ(\psi_{1},\cdots,\psi_{n}))\otimes Q$

2) $\sigma^{*}(\psi\otimes Q)=(\sigma^{*}\psi)\otimes Q$

3) $(P\otimes\varphi)\circ((P\otimes\varphi_{1}),\cdots,(P\otimes\varphi_{m}))=P\otimes(\varphi\circ(\varphi_{1},\cdots,\varphi_{m}))$

4) $\sigma^{*}(P\otimes\varphi)=P\otimes(\sigma^{*}\varphi)$

5) $(\psi\otimes Q)\circ((P_{1}\otimes\varphi),\cdots,(P_{n}\otimes\varphi))=\tau^{*}((P\otimes\varphi)\circ((\psi,Q_{1}),\cdots,(\psi,Q{}_{m})))$
\end{defn}
By the relations above we mean every possible choice of arrows $\psi,\varphi,\psi_{i},\varphi_{j}$
for which the compositions are defined. The relations of type 1 and
2 ensure that for any $Q\in\mathcal{P}_{0}$, the map $P\mapsto(P,Q)$
naturally extends to a functor $\mathcal{P}\rightarrow\mathcal{P}\otimes\mathcal{Q}$.
Similarly, the relations of type 3 and 4 guarantee that for each $P\in\mathcal{P}_{0}$,
the map $Q\mapsto(P,Q)$ naturally extends to a functor $\mathcal{Q}\rightarrow\mathcal{P}\otimes\mathcal{Q}$.
The relation of type 5 can be visualized as follows. The left hand
side can be drawn as\[
\xymatrix{*{}\ar@{-}[dr]_{(P_{1},Q_{1})} &  & *{}\ar@{-}[dl]^{(P_{1},Q_{m})} &  & *{}\ar@{-}[dr]_{(P_{n},Q_{1})} &  & *{}\ar@{-}[dl]^{(P_{n},Q_{m})}\\
\ar@{}[r]|{P_{1}\otimes\varphi} & *{\bullet}\ar@{-}[drr]_{(P_{1},Q)} &  &  & \ar@{}[r]|{P_{n}\otimes\varphi} & *{\bullet}\ar@{-}[dll]^{(P_{n},Q)}\\
 &  & \ar@{}[r]|{\psi\otimes Q} & *{\bullet}\ar@{-}[d]^{(P,Q)}\\
 &  &  & *{}}
\]
while the right hand side can be drawn as \[
\xymatrix{*{}\ar@{-}[dr]_{(P_{1},Q_{1})} &  & *{}\ar@{-}[dl]^{(P_{n},Q_{1})} &  & *{}\ar@{-}[dr]_{(P_{1},Q_{m})} &  & *{}\ar@{-}[dl]^{(P_{n},Q_{m})}\\
\ar@{}[r]|{\psi\otimes Q_{1}} & *{\bullet}\ar@{-}[drr]_{(P,Q_{1})} &  &  & \ar@{}[r]|{\psi\otimes Q_{m}\,\,\,\,} & *{\bullet}\ar@{-}[dll]^{(P,Q_{m})}\\
 &  & \ar@{}[r]|{P\otimes\varphi} & *{\bullet}\ar@{-}[d]^{(P,Q)}\\
 &  &  & *{}}
\]
As given, the operations cannot be equated since their domains do
not agree. There is however an evident permutation $\tau$ that equates
the domains and it is that permutation $\tau$ that is used in the
equation of type $5$ above. 
\begin{thm}
The category $(Ope,\otimes,\star)$ is a symmetric closed monoidal
category. \end{thm}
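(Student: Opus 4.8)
The plan is to extract the correct universal property from the generators-and-relations presentation and then derive everything formally. First I would introduce the auxiliary notion of a \emph{bimorphism} $\mathcal{P},\mathcal{Q}\to\mathcal{R}$: a function $\mathcal{P}_0\times\mathcal{Q}_0\to\mathcal{R}_0$ together with, for each $Q\in\mathcal{Q}_0$, a functor $\mathcal{P}\to\mathcal{R}$ sending $P\mapsto(P,Q)$, and for each $P\in\mathcal{P}_0$ a functor $\mathcal{Q}\to\mathcal{R}$ sending $Q\mapsto(P,Q)$, such that the two families interchange according to the pattern of relation $5$. Reading off the defining presentation, relations $1$ and $2$ say exactly that the first family is a functor, relations $3$ and $4$ that the second is, and relation $5$ is the interchange law; hence a functor $\mathcal{P}\otimes\mathcal{Q}\to\mathcal{R}$ is precisely a bimorphism. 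This yields the universal property I will use throughout: $Ope(\mathcal{P}\otimes\mathcal{Q},\mathcal{R})$ is in natural bijection with the set of bimorphisms out of $(\mathcal{P},\mathcal{Q})$.

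From this universal property the symmetric monoidal skeleton is formal. Since the notion of bimorphism is manifestly symmetric in its two inputs, transposing the two families gives a natural isomorphism $\mathcal{P}\otimes\mathcal{Q}\cong\mathcal{Q}\otimes\mathcal{P}$. Because $\star$ carries only an identity arrow, a bimorphism $\star,\mathcal{Q}\to\mathcal{R}$ collapses to a single functor $\mathcal{Q}\to\mathcal{R}$, giving the unit isomorphism $\star\otimes\mathcal{Q}\cong\mathcal{Q}$. For associativity I would spell out the evident notion of a \emph{trimorphism} $\mathcal{P},\mathcal{Q},\mathcal{R}\to\mathcal{S}$ and check that both $(\mathcal{P}\otimes\mathcal{Q})\otimes\mathcal{R}$ and $\mathcal{P}\otimes(\mathcal{Q}\otimes\mathcal{R})$ corepresent trimorphisms, producing the associator. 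All of this amounts to saying that $Ope$ carries a symmetric multicategory structure whose binary maps are representable, so the coherence (pentagon, triangle, hexagon) axioms hold automatically; alternatively each can be verified by hand, reducing to the uniqueness clause of the relevant universal property applied to generating operations.

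For closedness I would construct the internal hom $[\mathcal{Q},\mathcal{R}]$ directly. Its objects are the functors (that is, $\mathcal{Q}$-algebras) $F\colon\mathcal{Q}\to\mathcal{R}$, and an $n$-ary operation $(F_1,\dots,F_n)\to F_0$ is an \emph{$n$-ary natural transformation}: a family of operations $\theta_Q\colon(F_1(Q),\dots,F_n(Q))\to F_0(Q)$ in $\mathcal{R}$, indexed by $Q\in\mathcal{Q}_0$, satisfying for every $\varphi\colon(Q_1,\dots,Q_m)\to Q$ in $\mathcal{Q}$ the naturality identity $F_0(\varphi)\circ(\theta_{Q_1},\dots,\theta_{Q_m})=\tau^*(\theta_Q\circ(F_1(\varphi),\dots,F_n(\varphi)))$ for the evident domain-matching permutation $\tau$ --- exactly relation $5$ transcribed. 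Composition, identities, and symmetric-group actions are all defined pointwise in $Q$ using the structure of $\mathcal{R}$, so the operad axioms for $[\mathcal{Q},\mathcal{R}]$ reduce pointwise to those of $\mathcal{R}$. I would then check that a functor $G\colon\mathcal{P}\to[\mathcal{Q},\mathcal{R}]$ unwinds, object by object and operation by operation, to precisely a bimorphism $\mathcal{P},\mathcal{Q}\to\mathcal{R}$: an object $P$ goes to a functor $\mathcal{Q}\to\mathcal{R}$, an operation $\psi$ of $\mathcal{P}$ goes to the natural family $\{\psi\otimes Q\}_Q$, functoriality of $\psi\mapsto G(\psi)$ matches relations $1$--$4$, and naturality of each $G(\psi)$ matches relation $5$. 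Composing with the universal property of the tensor yields the natural bijection $Ope(\mathcal{P}\otimes\mathcal{Q},\mathcal{R})\cong Ope(\mathcal{P},[\mathcal{Q},\mathcal{R}])$.

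The main obstacle I anticipate is purely bookkeeping: getting the permutations right. Relation $5$ holds only up to the equalizing permutation $\tau$, and the same permutations must be tracked consistently when verifying that the operadic composite of $n$-ary natural transformations is again natural, when checking equivariance of composition in $[\mathcal{Q},\mathcal{R}]$, and when confirming the hexagon. I expect the verification that $n$-ary natural transformations are closed under composition --- that the pointwise composite again satisfies the correctly permuted naturality identity --- to be the one genuinely delicate computation; everything else is a transcription of the operad axioms of $\mathcal{R}$ pointwise in $Q$.
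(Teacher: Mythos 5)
Your proposal is correct and follows essentially the same route as the paper: the internal hom $[\mathcal{Q},\mathcal{R}]$ you construct --- objects are functors, $n$-ary arrows are families $\{\theta_Q\}_{Q\in\mathcal{Q}_0}$ of operations satisfying the permutation-twisted naturality condition --- is precisely the paper's definition, your identity $F_0(\varphi)\circ(\theta_{Q_1},\dots,\theta_{Q_m})=\tau^*(\theta_Q\circ(F_1(\varphi),\dots,F_n(\varphi)))$ being exactly the paper's condition $\varphi_2=\sigma^*(\varphi_1)$. The bimorphism universal property, the trimorphism argument for associativity, and the permutation bookkeeping you supply are the details the paper omits and defers to \cite{thesis}, so your write-up is a more complete rendering of the same argument rather than a different one.
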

\begin{proof}
The internal hom operad $[\mathcal{P},\mathcal{Q}]$ has as objects
all morphisms of operads $F:\mathcal{P}\to\mathcal{Q}$ and the arrows
with domain $F_{1},\cdots,F_{n}$ and codomain $F_{0}$ are analogues
of natural transformations as follows. A \emph{natural transformation}
$\alpha$ from $(F_{1},\cdots,F_{n})$ to $F_{0}$ is a family $\{\alpha_{P}\}_{P\in\mathcal{P}_{0}}$,
with $\alpha_{P}\in\mathcal{Q}(F_{1}(P),\cdots,F_{n}(P);F_{0}(P))$,
satisfying the following property. Given any operation $\psi\in\mathcal{P}(P_{1},\cdots,P_{m};P)$
consider the following diagrams in $\mathcal{Q}$: \[
\xymatrix{*{}\ar@{-}[rd]_{F_{1}(P_{1})} & \ar@{}[d]|<<<{\cdots} & *{}\ar@{-}[dl]^{F_{n}(P_{1})} &  & *{}\ar@{-}[dr]_{F_{1}(P_{m})} & \ar@{}[d]|<<<{\cdots} & *{}\ar@{-}[dl]^{F_{n}(P_{m})}\\
\ar@{}[r]|{\,\,\,\,\,\alpha_{P_{1}}} & *{\bullet}\ar@{-}[drr]_{F_{0}(P_{1})} &  & \ar@{}[d]|<<<{\cdots} &  & *{\bullet}\ar@{-}[dll]^{F_{0}(P_{m})} & \ar@{}[l]|{\alpha_{P_{m}}\,\,\,\,\,}\\
 &  &  & *{\bullet}\ar@{-}[d]_{F_{0}(P)}\ar@{}[r]|{F_{0}(\psi)} & *{}\\
 &  &  & *{}}
\]
 and \[
\xymatrix{*{}\ar@{-}[rd]_{F_{1}(P_{1})} & \ar@{}[d]|<<<{\cdots} & *{}\ar@{-}[dl]^{F_{1}(P_{m})} &  & *{}\ar@{-}[dr]_{F_{n}(P_{1})} & \ar@{}[d]|<<<{\cdots} & *{}\ar@{-}[dl]^{F_{n}(P_{m})}\\
\ar@{}[r]|{\,\, F_{1}(\psi)} & *{\bullet}\ar@{-}[drr]_{F_{1}(P)} &  & \ar@{}[d]|<<<{\cdots} &  & *{\bullet}\ar@{-}[dll]^{F_{n}(P)} & \ar@{}[l]|{F_{n}(\psi)}\\
 &  &  & *{\bullet}\ar@{-}[d]_{F_{0}(P)}\ar@{}[r]|{\alpha_{P}\,\,\,\,\,} & *{}\\
 &  &  & *{}}
\]
and let $\varphi_{1}$ and $\varphi_{2}$ be their respective compositions.
Then $\varphi_{2}=\sigma^{*}(\varphi_{1})$, where $\sigma$ is the
evident permutation equating the domain of $\varphi_{1}$ with that
of $\varphi_{2}$. The interested reader is referred to \cite{thesis}
for more details on horizontal and vertical compositions of natural
transformations leading to the construction of the strict $2$-category
of small operads in which the strict $2$-category of small categories
embeds. 
\end{proof}
We now return to our general notion of $\mathcal{P}$-algebras in
$\mathcal{E}$ and notice the very simple result:
\begin{lem}
Let $\mathcal{P}$ and $\mathcal{E}$ by symmetric operads. The internal
hom $[\mathcal{P},\mathcal{E}]$ is rightfully to be called the operad
of $\mathcal{P}$-algebras in $\mathcal{E}$ in the sense that the
objects of $[\mathcal{P},\mathcal{E}]$ are the $\mathcal{P}$-algebras
in $\mathcal{E}$, the unary arrows are the morphisms of such algebras,
and the $n$-ary arrows are 'multivariable' morphisms of algebras
(with $0$-ary morphisms thought of as constants). 
\end{lem}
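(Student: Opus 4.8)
The plan is to unwind the explicit description of the internal hom operad $[\mathcal{P},\mathcal{E}]$ supplied in the proof of the preceding theorem and to match each layer of its structure against the corresponding classical notion, so that the lemma becomes a sequence of bookkeeping identifications. For the objects there is nothing to prove: by construction the objects of $[\mathcal{P},\mathcal{E}]$ are exactly the operad morphisms $F:\mathcal{P}\to\mathcal{E}$, and a $\mathcal{P}$-algebra in $\mathcal{E}$ was \emph{defined} to be precisely such a morphism. The first assertion therefore holds by definition, and all the work lies in interpreting the arrows.

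For the unary arrows I would specialize the definition of a natural transformation to $n=1$. A unary arrow $F_{1}\to F_{0}$ is a natural transformation from the one-element family $(F_{1})$ to $F_{0}$, i.e.\ a family $\{\alpha_{P}\}_{P\in\mathcal{P}_{0}}$ with each $\alpha_{P}\in\mathcal{E}(F_{1}(P);F_{0}(P))$ a unary arrow. In the naturality requirement $\varphi_{2}=\sigma^{*}(\varphi_{1})$ the source consists of a single functor, so the shuffling permutation $\sigma$ is forced to be the identity and the two diagrams collapse to the single equation
\[
\alpha_{P}\circ F_{1}(\psi)=F_{0}(\psi)\circ(\alpha_{P_{1}},\cdots,\alpha_{P_{m}})
\]
for every operation $\psi\in\mathcal{P}(P_{1},\cdots,P_{m};P)$. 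This is verbatim the condition that $\alpha$ be a morphism of $\mathcal{P}$-algebras; in the representable case $\mathcal{E}=\hat{\mathcal{D}}$ of Lemma~\ref{lem:mon cat as ope} it specializes to the familiar commuting square $\alpha_{P}\circ F_{1}(\psi)=F_{0}(\psi)\circ(\alpha_{P_{1}}\otimes\cdots\otimes\alpha_{P_{m}})$. I would then note that vertical composition of such natural transformations is composition of the underlying families, so that the category part $j^{*}([\mathcal{P},\mathcal{E}])$ is precisely the category of $\mathcal{P}$-algebras and their morphisms.

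For the general $n$-ary arrows the statement is essentially interpretive: an $n$-ary arrow from $(F_{1},\cdots,F_{n})$ to $F_{0}$ is by construction a family $\{\alpha_{P}\}$ with $\alpha_{P}\in\mathcal{E}(F_{1}(P),\cdots,F_{n}(P);F_{0}(P))$ satisfying the full naturality identity, which is exactly what one means by a multivariable morphism of algebras; the permutation $\sigma$ merely records the reindexing forced by passing an operation of $\mathcal{P}$ past the $n$ slots. The $0$-ary case deserves to be spelled out: a nullary arrow with codomain $F_{0}$ is a family of constants $\alpha_{P}\in\mathcal{E}(\,;F_{0}(P))$ subject to $\alpha_{P}=F_{0}(\psi)\circ(\alpha_{P_{1}},\cdots,\alpha_{P_{m}})$, which is the evident notion of a compatible choice of global points, i.e.\ a constant of the algebra $F_{0}$. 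I expect no genuine obstacle here; the entire content is definitional, and the only points demanding attention are confirming that the permutation in the naturality axiom degenerates to the identity in the unary case and is interpreted correctly (as a slot-reindexing) in the multivariable case.
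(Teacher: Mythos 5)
Your proposal is correct and takes essentially the same route as the paper: the paper states this lemma without proof, treating it as an immediate unwinding of the explicit description of the internal hom (objects are operad morphisms, $n$-ary arrows are the generalized natural transformations) given in the proof of the preceding closed-monoidal-structure theorem, which is exactly the definitional bookkeeping you carry out. Your verification that the permutation $\sigma$ degenerates to the identity in the unary case, recovering the classical algebra-morphism equation, and your reading of the nullary case as compatible constants are both accurate fillings-in of what the paper leaves implicit.
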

It is trivial to verify for example that $[Comm,Set]$ is isomorphic
to the operad obtained from the symmetric monoidal category $CommMon(Set)$
of commutative monoids in $Set$ by means of the construction given
in Lemma \ref{lem:mon cat as ope}. Here $Set$ can be replaced by
any symmetric monoidal category. This motivates the following definition. 
\begin{defn}
Let $\mathcal{E}$ be a symmetric operad and $S$ some notion of an
algebraic structure on objects of $\mathcal{E}$ together with a notion
of (perhaps multivariable) morphisms between such structures. We call
a symmetric operad $\mathcal{P}$ a \emph{classifying }operad for
$S$ (in $\mathcal{E}$) if the operad $[\mathcal{P},\mathcal{E}]$
satisfies that $[\mathcal{P},\mathcal{E}]_{0}$ is precisely the set
of $S$-structures in $\mathcal{E}$ and the arrows in $[\mathcal{P},\mathcal{E}]$
correspond precisely to the notion of morphisms between such structures. \end{defn}
\begin{example}
\label{exa:classifying categories over A}The symmetric operad $Comm$
is a classifying operad for commutative monoids in a symmetric operad
$\mathcal{E}$. There is a symmetric operad $As$ that classifies
monoids in $\mathcal{E}$ (i.e., an object with an associative binary
operation with a unit) which the reader is invited to find. A \emph{magma}
is a set together with a binary operation, not necessarily associative, 
and there is a symmetric operad that classifies magmas. There is also
a symmetric operad that classifies non-unital monoids as well as one
that classifies non-unital commutative monoids. It is a rather unfortunate
fact that there is no symmetric operad that classifies all small categories.
However, given a fixed set $A$ consider the category $Cat_{A}$ of
\emph{categories over $A$, }in which the objects are categories having
$A$ as set of objects and where the arrows are functors between such
categories whose object part is the identity. Then there is a symmetric
operad $C_{A}$ that classifies categories over $A$. Similarly, with
the obvious definition, there is a symmetric operad $O_{A}$ that
classifies symmetric operads over $A$. \end{example}
\begin{rem}
In general, there can be two non-equivalent operads $\mathcal{P}$
and $\mathcal{Q}$ that classify the same algebraic structure. We
will not get into the question of detecting when two symmetric operads
have equivalent operads of algebras. 
\end{rem}
A well-known phenomenon in category theory is the interchangeability
of repeated structures. Thus, for example, a category object in $Grp$
is the same as a group object in $Cat$. With the formalism of symmetric
operads we have thus far we can easily prove a whole class of such
cases (but in fact not the case just mentioned, since group objects
cannot be classified by symmetric operads).
\begin{lem}
Let $\mathcal{P}_{1}$ and $\mathcal{P}_{2}$ be two symmetric operads
and let $\mathcal{E}$ be an environment operad. Then $\mathcal{P}_{1}$-algebras
in $\mathcal{P}_{2}$-algebras in $\mathcal{E}$ are the same as $\mathcal{P}_{2}$-algebras
in $\mathcal{P}_{1}$-algebras in $\mathcal{E}$. \end{lem}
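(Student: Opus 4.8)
The plan is to recognize the statement as an instance of the exponential law in a symmetric monoidal closed category, applied to the structure on $Ope$ just established. First I would translate both sides into internal hom operads. By the lemma identifying $[\mathcal{P},\mathcal{E}]$ as the operad of $\mathcal{P}$-algebras in $\mathcal{E}$, a $\mathcal{P}_2$-algebra in $\mathcal{E}$ is precisely an object of $[\mathcal{P}_2,\mathcal{E}]$, and since $[\mathcal{P}_2,\mathcal{E}]$ is again a symmetric operad it is itself a legitimate environment operad. Hence a $\mathcal{P}_1$-algebra in the operad of $\mathcal{P}_2$-algebras in $\mathcal{E}$ is an object of $[\mathcal{P}_1,[\mathcal{P}_2,\mathcal{E}]]$, and symmetrically a $\mathcal{P}_2$-algebra in the operad of $\mathcal{P}_1$-algebras in $\mathcal{E}$ is an object of $[\mathcal{P}_2,[\mathcal{P}_1,\mathcal{E}]]$. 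It therefore suffices to produce an isomorphism of operads $[\mathcal{P}_1,[\mathcal{P}_2,\mathcal{E}]]\cong[\mathcal{P}_2,[\mathcal{P}_1,\mathcal{E}]]$, which will in particular match up the two classes of iterated algebras bijectively.

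The heart of the argument is the standard exponential law $[\mathcal{A}\otimes\mathcal{B},\mathcal{C}]\cong[\mathcal{A},[\mathcal{B},\mathcal{C}]]$, which holds in any symmetric monoidal closed category and which I would deduce from the preceding theorem that $(Ope,\otimes,\star)$ is symmetric closed monoidal. The cleanest route is via Yoneda: for an arbitrary symmetric operad $\mathcal{X}$ one has the chain of natural bijections
\[
Ope(\mathcal{X},[\mathcal{A}\otimes\mathcal{B},\mathcal{C}])\cong Ope(\mathcal{X}\otimes(\mathcal{A}\otimes\mathcal{B}),\mathcal{C})\cong Ope((\mathcal{X}\otimes\mathcal{A})\otimes\mathcal{B},\mathcal{C})\cong Ope(\mathcal{X}\otimes\mathcal{A},[\mathcal{B},\mathcal{C}])\cong Ope(\mathcal{X},[\mathcal{A},[\mathcal{B},\mathcal{C}]]),
\]
using the tensor--hom adjunction twice together with the associativity coherence of $\otimes$; the Yoneda lemma then upgrades this to the asserted isomorphism of operads. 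With the exponential law in hand the result is immediate: applying it twice and inserting the symmetry isomorphism $\mathcal{P}_1\otimes\mathcal{P}_2\cong\mathcal{P}_2\otimes\mathcal{P}_1$ of the Boardman--Vogt tensor product gives
\[
[\mathcal{P}_1,[\mathcal{P}_2,\mathcal{E}]]\cong[\mathcal{P}_1\otimes\mathcal{P}_2,\mathcal{E}]\cong[\mathcal{P}_2\otimes\mathcal{P}_1,\mathcal{E}]\cong[\mathcal{P}_2,[\mathcal{P}_1,\mathcal{E}]],
\]
and reading off the objects of these operads yields the claimed coincidence of the two kinds of iterated algebras.

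I do not expect a serious obstacle, since once the closed monoidal structure is granted the whole argument is formal. The only point demanding any care is the justification of the exponential law itself: rather than assuming it, I would derive it from the closedness established earlier, making sure that the naturality in $\mathcal{X}$ needed for the Yoneda step really does follow from the naturality of the tensor--hom adjunction and of the associativity and symmetry constraints. A secondary point worth a sentence is that the statement as phrased only asserts a coincidence of algebras (the objects), whereas the argument in fact delivers a full isomorphism of the corresponding operads of algebras, so that the multivariable morphisms of iterated algebras are matched up as well.
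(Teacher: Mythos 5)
Your proof is correct and follows essentially the same route as the paper: the paper likewise reformulates the statement as an isomorphism $[\mathcal{P}_{1},[\mathcal{P}_{2},\mathcal{E}]]\cong[\mathcal{P}_{2},[\mathcal{P}_{1},\mathcal{E}]]$ and derives it from the symmetry of the Boardman--Vogt tensor product together with the closed monoidal structure on $Ope$. The only difference is one of detail: you spell out the exponential law and its Yoneda-style derivation, which the paper compresses into the remark that the proof is ``trivial from the symmetry'' of the tensor product.
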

\begin{proof}
The precise formulation of the lemma is that there is an isomorphism
of symmetric operads $[\mathcal{P}_{1},[\mathcal{P}_{2},\mathcal{E}]]\cong[\mathcal{P}_{2},[\mathcal{P}_{1},\mathcal{E}]]$.
The proof is trivial from the symmetry of the Boardman-Vogt tensor
product.
\end{proof}
Consider the operad $As$ that classifies monoids: it has just one
object and its arrows of arity $n$ is the set $\Sigma_{n}$ of permutations
on $n$-symbols. It is not hard to show that $As\otimes As\cong Comm$
which essentially is Eckman-Hilton duality proving that associative
monoids in associative monoids are commutative monoids, except that
it is done at the level of classifying operads rather than algebras. 

We conclude our review of the basics of operad theory by noting that
in the same way that categories can be enriched in a symmetric monoidal category
$\mathcal{E}$ (see \cite{Kelly Enr. Cat. }) so can operads be so
enriched. With the evident definitions one then obtains the category
$Ope(\mathcal{E})$ of all small operads enriched in $\mathcal{E}$. 
\begin{rem}
In the presence of coproducts in $\mathcal{E}$ any non-enriched symmetric
operad $\mathcal{P}$ gives rise to an operad $Dis(\mathcal{P})$
enriched in $\mathcal{E}$ in which each hom-object is a coproduct,
indexed by the corresponding hom-set in $\mathcal{P}$, of the unit
$I$ of $\mathcal{E}$. We will usually refer to $Dis(\mathcal{P})$
as the corresponding discrete operad in $\mathcal{E}$ and call it
again $\mathcal{P}$. 
\end{rem}
Our main interest in symmetric operads is in their use in the theory
of homotopy invariant algebraic structures where enrichment plays
a vital role. However, before we embark on the subtleties of homotopy
invariance we briefly treat the isomorphism invariance property for
non-enriched symmetric operads.

\subsection{The isomorphism invariance property }

It is a triviality that an algebraic structure can be transferred,
uniquely, along an isomorphism. To be more precise and to formulate
this in the language of operads, let $\mathcal{P}$ and $\mathcal{E}$
be symmetric operads and $F:\mathcal{P}\to\mathcal{E}$ an algebra
structure on $\{F_{0}(P)\}_{P\in\mathcal{P}_{0}}$. Assume that we are
given a family $\{f_{P}:F_{0}(P)\to G_{0}(P)\}_{P\in\mathcal{P}_{0}}$
of isomorphisms in $\mathcal{E}$. Then there exists a unique $\mathcal{P}$-algebra
structure $G:\mathcal{P}\to\mathcal{E}$ on $\{G_{0}(P)\}_{P\in\mathcal{P}_{0}}$
for which the family $\{f_{P}\}_{P\in\mathcal{P}_{0}}$ forms a natural
isomorphism from $F$ to $G$ and thus an isomorphism between the
algebras. We call this the \emph{isomorphism invariance property }of
algebras. 

We can reformulate this property diagrammatically as follows. Let
$0$ be a one-object symmetric operad with the identity arrow only,
and $0\to(0\leftrightarrows1)$ the inclusion $0\mapsto0$ into the
free-living isomorphism. Then a choice of functor $F:\mathcal{P}\to\mathcal{E}$
is the same as a functor $0\to[\mathcal{P},\mathcal{E}]$ while a
functor $(0\leftrightarrows1)\to[\mathcal{P},\mathcal{E}]$ can be
identified with two functors $\xymatrix{\mathcal{P}\ar@<2pt>[r]\ar@<-2pt>[r] & \mathcal{E}}
$ and a natural isomorphism between them. The set of objects $\mathcal{P}_{0}$
seen as a category with only identity arrows can be seen as a symmetric
operad. One then has the evident inclusion functor $\mathcal{P}_{0}\to\mathcal{P}$
which induces a functor $[\mathcal{P},\mathcal{E}]\to[\mathcal{P}_{0},\mathcal{E}]$.
The isomorphism invariance property for $\mathcal{P}$-algebras in
$\mathcal{E}$ is then the statement that in the following diagram:\[
\xymatrix{0\ar[d]\ar[rrr]^{\forall F} &  &  & [\mathcal{P},\mathcal{E}]\ar[d]\\
0\leftrightarrows1\ar[rrr]_{\forall\{F_{0}(P)\to G_{0}(P)\}_{P\in\mathcal{P}_{0}}}\ar@{..>}[rrru]^{\exists\alpha} &  &  & [\mathcal{P}_{0},\mathcal{E}]}
\]
the diagonal filler exists (and is unique) for any functor $F:\mathcal{P}\to \mathcal{E}$ and any family of isomorphisms
$\{F_{0}(P)\to G_{0}(P)\}_{P\in\mathcal{P}_{0}}$. 

In the formalism of Quillen model structures there is a conceptual
way to see why a lift in the diagram above exists. To present it we
recall that a functor $F:\mathcal{C}\to\mathcal{D}$ of categories
is an \emph{isofibration} if it has the right lifting property with
respect to the inclusion $0\to(0\leftrightarrows1)$. Similarly, a
functor $F:\mathcal{P}\to\mathcal{Q}$ of symmetric operads is an
\emph{isofibration} of symmetric operads if it has the right lifting
property with respect to the same inclusion $0\to(0\leftrightarrows1)$
with each category seen as an operad. Equivalently, $F:\mathcal{P}\to\mathcal{Q}$
is an isofibration (of operads) if, and only if, $j^{*}(F)$ is an
isofibration of categories. We now recall the operadic Quillen model
structure on symmetric operads. 
\begin{thm}
The category $Ope$ of symmetric operads with the Boardman-Vogt tensor
product admits a cofibrantly generated closed monoidal model structure
in which the weak equivalences are the operadic equivalences, the
cofibrations are those functors $F:\mathcal{P}\to\mathcal{Q}$ such
that the object part of $F$ is injective, and the fibrations are
the isofibrations. All operads are fibrant and cofibrant. The Quillen
model structure induced on $Cat\cong Ope/\star$ is the categorical
one (also known as the 'folk' or 'natural' model structure).\end{thm}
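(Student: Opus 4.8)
The plan is to exhibit the three classes as a cofibrantly generated model structure by verifying the hypotheses of the standard recognition theorem for such structures (as in \cite{Hovey Model Cat}), and then to check the monoidal and (co)fibrancy clauses separately. Since $Ope$ is complete and cocomplete (indeed it is locally finitely presentable, being the category of models of a multisorted essentially algebraic theory), every object is small and both small-object arguments are available. The class $W$ of operadic equivalences is closed under retracts and satisfies two-out-of-three: full faithfulness is a condition of bijectivity on each hom-set and hence enjoys these closure properties, while essential surjectivity inherits them from the corresponding properties of equivalences of categories through the functor $j^*$.

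First I would fix the generators. For the generating cofibrations $I$ I take: the map $\emptyset\to\star$ adjoining a single object; for each $n\ge0$ the inclusion $D_n\hookrightarrow F_n$ of the discrete operad on $n+1$ distinct objects into the operad freely generated by one $n$-ary operation among them; and for each $n\ge0$ the map $G_n\to F_n$ collapsing two parallel free $n$-ary operations to one. A map has the right lifting property against $I$ precisely when it is surjective on objects (from $\emptyset\to\star$), full in every arity (from $D_n\hookrightarrow F_n$), and faithful in every arity (from $G_n\to F_n$); that is, $I$-inj is exactly the class of functors surjective on objects and bijective on each hom-set. For the generating trivial cofibrations I take the single map $J=\{\,0\to(0\leftrightarrows1)\,\}$ of the introduction, whose right lifting property is by definition the isofibration condition, so $J$-inj is exactly the class of fibrations.

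The recognition theorem then requires essentially two homotopical inputs. The first is that $J$-cell consists of trivial cofibrations: a pushout of $0\to(0\leftrightarrows1)$ adjoins to an operad a new object together with a coherent isomorphism to an existing one, and such an inclusion is both injective on objects and an operadic equivalence, while transfinite composition and retracts preserve both properties. The second, which supplies the compatibility clause, is the identification of $I$-inj with $W\cap J$-inj: a functor that is an isofibration and an operadic equivalence is surjective on objects, since essential surjectivity combined with the isofibration lifting of the witnessing isomorphisms upgrades to genuine surjectivity, so it lies in $I$-inj, and conversely every surjective-on-objects, bijective-on-homs functor is plainly an isofibration and an equivalence. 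Granting these, the theorem produces the model structure, with cofibrations equal to $I$-cof; a direct lifting argument (or the retract argument applied to the cofibration/trivial-fibration factorization) identifies $I$-cof with the functors injective on objects, matching the statement. All objects are cofibrant because the initial operad is $j_!(\emptyset)$ and $\emptyset\to\mathcal{P}$ is injective on objects; all objects are fibrant because the terminal operad $Comm$ has $j^*(Comm)=*$, which carries no nonidentity isomorphisms, so every $\mathcal{P}\to Comm$ is vacuously an isofibration.

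There remain the monoidal clause and the comparison with $Cat$. Closedness of the underlying category is already the content of the earlier theorem that $(Ope,\otimes,\star)$ is symmetric monoidal closed, and the unit axiom is automatic since the unit $\star$ is cofibrant; what must be shown is the pushout-product axiom. On objects the Boardman--Vogt tensor is the cartesian product of object-sets, so the pushout-product of two cofibrations is injective on objects and hence a cofibration. The delicate point, and the step I expect to be the main obstacle, is that the pushout-product of a cofibration with the generating trivial cofibration $0\to(0\leftrightarrows1)$ is again a weak equivalence: this requires controlling $\mathcal{P}\otimes(0\leftrightarrows1)$, which is only presented by generators and relations, and showing that it serves as a cylinder object for $\mathcal{P}$ with both end-inclusions trivial cofibrations, after which a standard gluing argument closes the pushout-product axiom. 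Finally, for the induced structure, the slicing lemma identifies $Ope/\star$ with $Cat$, operads over $\star$ being precisely those with only unary operations; under this identification operadic equivalence becomes equivalence of categories, injective-on-objects becomes injective-on-objects, and isofibration becomes isofibration, so the slice model structure transported to $Cat$ is exactly the folk structure.
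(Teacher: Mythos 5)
Your proposal is correct in substance, and it takes a genuinely different route from the paper's. The paper's proof (deferring details to the author's thesis) proceeds by ``direct verification of the axioms of a model category,'' i.e.\ a hands-on check of the factorization and lifting axioms; you instead invoke the recognition theorem for cofibrantly generated model structures, with generating cofibrations mirroring the folk structure on $Cat$ ($\emptyset\to\star$, discrete-to-free-$n$-ary-operation, and the parallel-pair collapse) and the single generating trivial cofibration $0\to(0\leftrightarrows1)$. Your route buys the cofibrant generation asserted in the statement for free, gets all smallness hypotheses at once from local presentability of $Ope$, and isolates the only two nontrivial verifications ($J\text{-cell}\subseteq W\cap I\text{-cof}$ and $I\text{-inj}=W\cap J\text{-inj}$); the paper's direct route is more elementary and produces explicit factorizations but must exhibit generators anyway to justify ``cofibrantly generated.'' Your identification of $I\text{-inj}$ with the surjective-on-objects fully faithful functors, the retract argument identifying $I\text{-cof}$ with the injective-on-objects functors, the fibrancy/cofibrancy claims, and the slicing comparison with the folk structure on $Cat$ are all sound.

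Two soft spots deserve flagging. First, your justification of two-out-of-three is misstated: full faithfulness alone does \emph{not} satisfy two-out-of-three (if $gf$ and $f$ are fully faithful, $g$ need not be), and neither does essential surjectivity alone; the correct argument uses both conditions jointly, exactly as for equivalences of categories (transport along the isomorphisms provided by essential surjectivity of $f$ to deduce full faithfulness of $g$). The conclusion is true, but not by the componentwise reasoning you give. Second, the step you flag as the main obstacle---that the end inclusions $\mathcal{P}\to\mathcal{P}\otimes(0\leftrightarrows1)$ are trivial cofibrations---is indeed the real content of the monoidal compatibility, and it does go through: since $(0\leftrightarrows1)$ has only unary operations, the interchange relations of type 5 in the Boardman--Vogt tensor product let one normalize every word so that the hom-sets of $\mathcal{P}\otimes(0\leftrightarrows1)$ are canonically identified with those of $\mathcal{P}$ (each $(P,0)$ becoming isomorphic to $(P,1)$), whence both end inclusions are fully faithful, essentially surjective, and injective on objects; your gluing argument via pushouts of trivial cofibrations and two-out-of-three then closes the pushout-product axiom. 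With these two repairs made explicit, your outline is a complete proof.
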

\begin{proof}
A direct verification of the axioms of a model category is not difficult
and not too tedious. Further details can be found in \cite{thesis}. 
\end{proof}
Now, in the diagram above the left vertical arrow is a trivial cofibration
and the right vertical arrow is, by the monoidal model structure axiom,
a fibration and hence the lift exists. We summarize the above discussion:
\begin{fact}
\label{fac:iso invar}The notion of algebras of operads is internalized
to the category $Ope$ by it being closed monoidal with respect to
the Boardman-Vogt tensor product. The isomorphism invariance property
of algebras is captured by the operadic Quillen model structure and
its compatibility with the Boardman-Vogt tensor product. 
\end{fact}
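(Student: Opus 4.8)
The plan is to establish the model structure by the recognition theorem for cofibrantly generated model categories (cf.\ \cite{Hovey Model Cat}), after which the remaining assertions follow with little extra work. I would first dispatch the cheap requirements: $Ope$ is complete and cocomplete (already noted), and both two-out-of-three and closure under retracts are immediate for the class $W$ of operadic equivalences (bijectivity on every hom-set together with essential surjectivity of $j^{*}$), for the injective-on-objects maps, and for the isofibrations. The next step is to pin down the trivial fibrations: I would show that a map is simultaneously an isofibration and an operadic equivalence exactly when it is surjective on objects and bijective on every hom-set, and note that a trivial cofibration is then just an injective-on-objects operadic equivalence.

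For the generators I would take $I$ to consist of the inclusion $\emptyset\to\star$ adjoining an object, the boundary inclusions $\partial C_{n}\to C_{n}$ freely adjoining an $n$-ary operation, and the maps collapsing two parallel $n$-ary operations (one of each for every $n\ge0$), and I would take the single generating trivial cofibration to be $J=\{\star\to\mathbb{I}\}$ with $\mathbb{I}=(0\leftrightarrows1)$. All these domains are finitely presentable, hence small, so the small object argument applies and supplies both factorizations. By construction $J$-inj is exactly the class of isofibrations, which is how the paper defined them, and a direct hom-set computation identifies $I$-inj with the surjective-on-objects, bijective-on-homs maps, i.e.\ with $W\cap(J\text{-inj})$; dually $I$-cof turns out to be precisely the injective-on-objects maps.

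The two lifting problems are then settled as follows. A lift of a cofibration against a trivial fibration is built by hand: surjectivity on objects extends the object assignment across the injection, and bijectivity on hom-sets forces the values on operations and makes the extension functorial. The lift of a trivial cofibration against an isofibration is the conceptually important one, and it is exactly the operadic isomorphism invariance property --- transport of an algebra structure along a chosen family of isomorphisms --- recast as a lifting statement, now for an arbitrary injective-on-objects equivalence in place of the single map $\star\to\mathbb{I}$. With the structure in place, the monoidal (pushout-product) axiom reduces, by cofibrant generation, to computing Boardman--Vogt tensor products of the corollas $C_{n}$ appearing in $I$ and checking the resulting maps lie in the correct class, while the unit axiom is automatic because every object is cofibrant. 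Finally every operad is fibrant, since the unique map to $Comm$ has $j^{*}$-image a functor into the terminal category and is therefore an isofibration, and every operad is cofibrant, since the unique map out of the initial operad $j_{!}(\emptyset)$ is vacuously injective on objects; and along $Cat\cong Ope/\star$ the three classes restrict to the equivalences, injective-on-objects functors, and isofibrations of the folk structure, using $j^{*}j_{!}\cong\mathrm{id}$ for the fibrations and the vanishing of the higher hom-sets of $j_{!}\mathcal{C}$ for the weak equivalences.

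The main obstacle is the acyclicity hypothesis of the recognition theorem: that every relative $J$-cell complex is an operadic equivalence. Concretely, a pushout of $\star\to\mathbb{I}$ along a chosen object of an operad freely adjoins an isomorphic copy of that object, and one must verify that such a pushout --- and then any transfinite composite of such maps --- is both injective on objects and a genuine operadic equivalence (essentially surjective because each new object is isomorphic to an old one, fully faithful because the hom-sets involving a new object are forced into bijection with the old ones, no new operations appearing among the original objects). Preservation of full faithfulness under these transfinite colimits is the only delicate point and is where the real operadic content lies; everything else parallels the folk model structure on $Cat$. Since this condition is precisely the isomorphism invariance property in disguise, I would prove that property carefully first and then read off both the acyclicity of $J$ and the trivial-cofibration-versus-fibration lift from it.
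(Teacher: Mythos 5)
Your proposal is sound, but it runs in the opposite logical direction from the paper. The paper treats the existence of the operadic model structure as a cited theorem (its proof is dispatched as a ``direct verification,'' with details deferred to \cite{thesis}) and then \emph{deduces} the isomorphism invariance property from it: in the displayed square, $0\to(0\leftrightarrows1)$ is a trivial cofibration, and the restriction map $[\mathcal{P},\mathcal{E}]\to[\mathcal{P}_{0},\mathcal{E}]$ is a fibration \emph{by the monoidal model axiom} (pullback-hom of the cofibration $\mathcal{P}_{0}\to\mathcal{P}$ against the fibrant $\mathcal{E}$), so the lift exists. You instead construct the model structure from scratch by the recognition theorem, proving the elementary transport-of-structure argument first and using it to drive the acyclicity of $J=\{\star\to(0\leftrightarrows1)\}$ and the lifting of trivial cofibrations against isofibrations; this is essentially the verification the paper leaves to the thesis, and your generating sets, the characterization of trivial fibrations as surjective-on-objects fully faithful maps, and the fibrancy/cofibrancy checks are all correct. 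What each route buys: the paper's is economical and makes the conceptual point of the Fact, while yours substantiates the model structure theorem the paper only cites and makes explicit that the ``capturing'' is non-circular, the transport argument being the real content repackaged by the formalism. Two points of care, neither fatal: your identification of the invariance property with the bare trivial-cofibration/fibration lifting elides the $\otimes/[-,-]$ adjunction --- the paper's formulation lives in the internal hom, and recovering it requires combining your lifting axiom with the pushout-product axiom you verify on generators, a step you never explicitly reconnect to the displayed square; and the Fact's first assertion (internalization of algebras via closed monoidality, i.e., the construction of $[\mathcal{P},\mathcal{Q}]$ from operadic natural transformations) is used throughout your plan but nowhere established.
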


\subsection{The homotopy invariance property}

In the presence of homotopy in $\mathcal{E}$ one can ask if a stronger
property than the isomorphism invariance property holds. Namely, if
one merely asks for the arrows $F_{0}(P)\to G_{0}(P)$ to be weak equivalences
instead of isomorphisms is it still possible to transfer the algebra
structure? A simple example is when one considers a topological monoid
$X$ and a topological space $Y$ together with continuous mappings
$f:X\to Y$ and $g:Y\to X$ such that $f\circ g$ and $g\circ f$
are homotopic to the respective identities. It is evident that if
$f$ and $g$ are not actual inverses of each other then the monoid
structure on $X$ will not, in general, induce a monoid structure
on $Y$. The question as to what kind of structure is induced goes
back to Stasheff's study of $H$-spaces and his famous associahedra
that are used to describe the kind of structure that arises \cite{H spaces}.
The more general problem for algebraic structures on topological spaces
can be addressed by using enriched symmetric operads as is done by
Boardman and Vogt in \cite{BV book}. Their techniques and results
were generalized by Berger and Moerdijk in a series of three papers
\cite{ax hom th ope,BV res ope,res colour ope} and below we present
an expository account of the constructions we will need. 

First we give a slightly vague definition of the homotopy invariance
property. In the context of dendroidal sets below we will give a precise
definition that is completely analogous to the definition of the isomorphism
invariance property. 
\begin{defn}
Let $\mathcal{E}$ be a symmetric monoidal model category and $\mathcal{Q}$ a symmetric
operad enriched in $\mathcal{E}$. We say that $\mathcal{Q}$-algebras
have the \emph{homotopy invariance property }if given an algebra $F:\mathcal{Q}\to\hat{\mathcal{E}}$
on $\{F_{0}(Q)\}_{Q\in\mathcal{Q}_{0}}$ and a family $\{f_{Q}:F_{0}(Q)\to G_{0}(Q)\}{}_{Q\in\mathcal{Q}_{0}}$
of weak equivalences in $\mathcal{E}$ (with perhaps some extra conditions)
there exists an essentially unique $\mathcal{Q}$-algebra structure
$G:\mathcal{Q}\to\hat{\mathcal{E}}$ on $\{G_{0}(Q)\}_{Q\in\mathcal{Q}_{0}}$. 
\end{defn}
It is evident that an arbitrary symmetric operad $\mathcal{P}$ need
not have the homotopy invariance property and the problem of sensibly
replacing $\mathcal{P}$ by another operad $\mathcal{Q}$ that does
have this property is referred to as the problem of finding the up-to-homotopy
version of the algebraic structure classified by $\mathcal{P}$. To
make this notion precise we recall that in \cite{ax hom th ope,BV res ope,res colour ope}
Berger and Moerdijk establish the following result.
\begin{thm}
Let $\mathcal{E}$ be a cofibrantly generated symmetric monoidal model category. Under
mild conditions the category $Ope(\mathcal{E})_{A}$ of symmetric
operads enriched in $\mathcal{E}$ with fixed set of objects equal
to $A$ and whose functors are the identity on all objects admits
a Quillen model structure in which the weak equivalences are hom-wise
weak equivalences and the fibrations are hom-wise fibrations. 
\end{thm}
We refer to this model structure as the Berger-Moerdijk model structure
on $Ope(\mathcal{E})_{A}$. 
\begin{rem}
The Berger-Moerdijk model structure on symmetric operads over a singleton
$A=\{*\}$ (given in \cite{ax hom th ope}) settles one of the open
problems listed by Hovey in \cite{Hovey Model Cat}.
\end{rem}
Among the consequences of the model structure Berger and Moerdijk
prove the following. 
\begin{thm}
If $\mathcal{Q}$ is cofibrant in the Berger-Moerdijk model structure
on $Ope(\mathcal{E})_{A}$ then $\mathcal{Q}$-algebras in $\mathcal{E}$
have, under mild conditions, the homotopy invariance property. \end{thm}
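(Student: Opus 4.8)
The plan is to realise both algebra structures as morphisms into endomorphism operads and to transfer one to the other by a lifting argument powered by the cofibrancy of $\mathcal{Q}$. Throughout I assume, as the ``mild conditions'' permit, that $\mathcal{E}$ is closed symmetric monoidal and that the underlying objects involved may be taken cofibrant and fibrant. First I would recall that fixing a collection of objects $\{X_Q\}_{Q\in\mathcal{Q}_0}$ in $\mathcal{E}$ (here $X_Q=F_0(Q)$) produces an endomorphism operad $\mathrm{End}(\{X_Q\})\in Ope(\mathcal{E})_A$, with object set $A=\mathcal{Q}_0$ and hom-objects
\[
\mathrm{End}(\{X_Q\})(Q_1,\dots,Q_n;Q_0)=\underline{\mathcal{E}}(X_{Q_1}\otimes\cdots\otimes X_{Q_n},\,X_{Q_0}),
\]
the operad structure coming from composition of internal homs exactly as in Lemma~\ref{lem:mon cat as ope}. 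Under this description a $\mathcal{Q}$-algebra structure on $\{X_Q\}$ is precisely a morphism $F\colon\mathcal{Q}\to\mathrm{End}(\{X_Q\})$ in $Ope(\mathcal{E})_A$, and likewise a structure $G$ on $\{Y_Q\}$ with $Y_Q=G_0(Q)$ is a morphism $G\colon\mathcal{Q}\to\mathrm{End}(\{Y_Q\})$.

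The central construction is a comparison operad $\mathcal{H}\in Ope(\mathcal{E})_A$ built from the family $\{f_Q\colon X_Q\to Y_Q\}$ and interpolating between the two endomorphism operads. I would define it hom-wise as the object of commuting squares,
\[
\mathcal{H}(Q_1,\dots,Q_n;Q_0)=\underline{\mathcal{E}}(X^{\otimes},X_{Q_0})\times_{\underline{\mathcal{E}}(X^{\otimes},Y_{Q_0})}\underline{\mathcal{E}}(Y^{\otimes},Y_{Q_0}),
\]
where $X^{\otimes}=X_{Q_1}\otimes\cdots\otimes X_{Q_n}$, $Y^{\otimes}=Y_{Q_1}\otimes\cdots\otimes Y_{Q_n}$, and the pullback is formed along postcomposition with $f_{Q_0}$ and precomposition with $f_{Q_1}\otimes\cdots\otimes f_{Q_n}$. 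This is the endomorphism operad of the family of arrows $\{f_Q\}$ computed internally to $\mathcal{E}$; it is a routine check that the pullbacks assemble into an operad and that the two projections give morphisms $p\colon\mathcal{H}\to\mathrm{End}(\{X_Q\})$ and $q\colon\mathcal{H}\to\mathrm{End}(\{Y_Q\})$ in $Ope(\mathcal{E})_A$.

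The heart of the argument, and the step I expect to be the main obstacle, is to show that $p$ is an acyclic fibration in the Berger--Moerdijk model structure, i.e.\ hom-wise. By its description $p$ is, on each hom-object, the pullback of the precomposition map $(f_{Q_1}\otimes\cdots\otimes f_{Q_n})^{*}\colon\underline{\mathcal{E}}(Y^{\otimes},Y_{Q_0})\to\underline{\mathcal{E}}(X^{\otimes},Y_{Q_0})$, so it suffices to see that this map is an acyclic fibration. Here the mild conditions and the \emph{monoidal} compatibility of the model structure enter decisively: after a factorisation the $f_Q$ may be taken to be acyclic cofibrations between cofibrant objects, and then the pushout-product axiom (applied iteratively to the tensor factors) forces $f_{Q_1}\otimes\cdots\otimes f_{Q_n}$ to again be an acyclic cofibration between cofibrant objects; the adjoint ($\mathrm{SM7}$) form of that axiom, together with fibrancy of $Y_{Q_0}$, then makes precomposition with it an acyclic fibration of hom-objects. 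Since acyclic fibrations are stable under pullback, $p$ is a hom-wise acyclic fibration, hence an acyclic fibration in $Ope(\mathcal{E})_A$. Controlling the tensor powers and keeping track of the requisite (co)fibrancy is exactly where the ``mild conditions'' do their work and where care is needed.

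With $p$ an acyclic fibration the conclusion is formal. Since $\mathcal{Q}$ is cofibrant, the map from the initial object $\mathbf{0}$ of $Ope(\mathcal{E})_A$ is a cofibration, so in the square
\[
\xymatrix{
\mathbf{0}\ar[r]\ar[d] & \mathcal{H}\ar[d]^{p}\\
\mathcal{Q}\ar[r]_{F}\ar@{..>}[ur]^{\tilde F} & \mathrm{End}(\{X_Q\})
}
\]
a lift $\tilde F\colon\mathcal{Q}\to\mathcal{H}$ exists. Setting $G:=q\circ\tilde F$ gives a $\mathcal{Q}$-algebra structure on $\{Y_Q\}$, and the pulled-back square structure of $\mathcal{H}$ records, operation by operation, that the family $\{f_Q\}$ is a morphism of $\mathcal{Q}$-algebras from $F$ to $G$; as each $f_Q$ is a weak equivalence this exhibits $F$ and $G$ as equivalent algebras. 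For essential uniqueness I would invoke the standard fact that any two solutions of such a lifting problem---a cofibration against an acyclic fibration, with cofibrant source---are left homotopic via a cylinder on $\mathcal{Q}$; applying $q$ transports this homotopy to a weak equivalence between the resulting algebra structures on $\{Y_Q\}$, so $G$ is determined up to equivalence of $\mathcal{Q}$-algebras.
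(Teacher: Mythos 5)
Your strategy is the right one, and it is in fact the core of the argument behind the result: the paper itself offers no proof beyond citing Theorem 3.5 of Berger--Moerdijk, and that proof is built from exactly your ingredients --- the coloured endomorphism operad of the family $\{f_Q\}$, presented hom-wise as a pullback, and a lifting argument against the cofibration $\mathbf{0}\to\mathcal{Q}$. Your construction of $\mathcal{H}$, the identification of $p$ as a base change of $(f_{Q_1}\otimes\cdots\otimes f_{Q_n})^{*}$, and the SM7 computation showing that this map is an acyclic fibration when the $f_Q$ are acyclic cofibrations between cofibrant objects with fibrant target, are all correct.

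The genuine gap is the sentence ``after a factorisation the $f_Q$ may be taken to be acyclic cofibrations''. That reduction is not available. Factor $f_Q$ as an acyclic cofibration $j_Q\colon X_Q\to Z_Q$ followed by an acyclic fibration $\pi_Q\colon Z_Q\to Y_Q$. Your lifting argument transfers $F$ forward along the $j_Q$ to a structure on $\{Z_Q\}$, but you must then move that structure forward along the $\pi_Q$ to the \emph{prescribed} objects $\{Y_Q\}$, and for that leg the lifting points the wrong way: the projection of $\mathrm{End}(\{\pi_Q\})$ onto the $Z$-side endomorphism operad is the base change of $(\pi_{Q_1}\otimes\cdots\otimes\pi_{Q_n})^{*}$, and $\pi^{\otimes}$ is an acyclic \emph{fibration}, not a cofibration, so SM7 makes this projection only a hom-wise weak equivalence, never an acyclic fibration, and the strict lift need not exist. (In general, acyclic cofibrations transfer structures from domain to codomain and acyclic fibrations from codomain to domain, so composing the two legs of a factorisation can never produce a strict forward transfer along an arbitrary weak equivalence; this is also why your final claim that $\{f_Q\}$ becomes a \emph{strict} morphism of algebras from $F$ to $G$ is stronger than what is true.) The repair, which is how Berger--Moerdijk actually conclude, is to note that the problematic projection is a weak equivalence between fibrant objects of $Ope(\mathcal{E})_{A}$ --- by two-out-of-three in the defining pullback square, since the other projection is an acyclic fibration and both structure maps of the pullback are weak equivalences --- and that, $\mathcal{Q}$ being cofibrant, a weak equivalence of fibrant operads induces a bijection on homotopy classes of maps out of $\mathcal{Q}$. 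This yields the transferred structure on $\{Y_Q\}$ and the compatibility of $\{f_Q\}$ only up to homotopy, which is precisely the ``essentially unique'' conclusion the statement asks for, rather than the strict one your write-up promises.
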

\begin{proof}
See Theorem 3.5 in \cite{ax hom th ope} for more details. 
\end{proof}
Thus, the problem of finding the up-to-homotopy version of the algebraic
structure classified by a symmetric operad $\mathcal{P}$ enriched
in $\mathcal{E}$ reduces to finding a cofibrant replacement $\mathcal{Q}$
of $\mathcal{P}$ in the Berger-Moerdijk model structure on $Ope(\mathcal{E})_{\mathcal{P}_{0}}$.
Of course, a cofibrant replacement always exists just by the presence
of the Quillen model structure. However, in order to actually compute
with it one needs an efficient construction of it, and this is the aim
of the $W$-construction.

\subsubsection{The original Boardman-Vogt W-construction for topological operads}

The $W$-construction is a functor $W:Ope(Top)\rightarrow Ope(Top)$
equipped with a natural transformation (an augmentation) $W\rightarrow id$.
A detailed account (albeit in a slightly different language than that
of operads) can be found in \cite{BV book} where it first appeared.
We give here an expository presentation aiming at explaining the ideas
important to us. 

For simplicity let us describe the planar version of the $W$-construction,
that is, we describe a functor taking a planar operad enriched in $Top$
to another such planar operad. We now fix a topological planar operad
$\mathcal{P}$ and describe the operad $W\mathcal{P}$. The objects
of $W\mathcal{P}$ are the same as those of $\mathcal{P}$. To describe
the arrow spaces we consider standard planar trees (a tree is planar
when it comes with an orientation of the edges at each vertex and
\emph{standard }means that a choice was made of a single planar tree
of each isomorphism class of planar isomorphisms of planar trees)
whose edges are labelled by objects of $\mathcal{P}$ and whose vertices
are labelled by arrows of $\mathcal{P}$ according to the rule that
the objects labelling the input edges of a vertex are equal (in their
natural order) to the input of the operation labelling that vertex.
Similarly the object labelling the output of the vertex is the output
object of the operation at the vertex. Moreover, each inner edge in
such a tree is given a length $0\le t\le1$. For objects $P_{0},\cdots,P_{n}\in (W\mathcal{P})_{0}$
let $A(P_{1},\cdots,P_{n};P_{0})$ be the topological space whose
underlying set is the set of all such planar labelled trees $\bar{T}$
for which the leaves of $\bar{T}$ are labelled by $P_{1},\cdots,P_{n}$
(in that order) and the root of $\bar{T}$ is labelled by $P_{0}$.
The topology on $A(P_{1},\cdots,P_{n};P_{0})$ is the evident one
induced by the topology of the arrow spaces in $\mathcal{P}$ and
the standard topology on the unit interval $[0,1]$. 

The space $W\mathcal{P}(P_{1},\cdots,P_{n};P_{0})$ is the quotient
of $A(P_{1},\cdots,P_{n};P_{0})$ obtained by the following identifications.
If $\bar{T}\in A(P_{1},\cdots,P_{n};P_{0})$ has an inner edge $e$
whose length is $0$ then we identify it with the tree $\bar{T}/e$
obtained from $\bar{T}$ by contracting the edge $e$ and labelling
the newly formed vertex by the corresponding $\circ_{i}$-composition
of the operations labelling the vertices at the two sides of $e$
(the other labels are as in $\bar{T}$). Thus pictorially we have
that locally in the tree a configuration \[
\xymatrix{*{\,}\ar@{-}[dr] &  & *{\,}\ar@{-}[dl]\\
*{\,}\ar@{-}[dr] & *{\bullet}\ar@{-}[d]_{0}^{c} & *{\,}\ar@{-}[dl]\ar@{}[l]|{\psi\,\,\,\,\,\,\,\,\,\,}\\
 & *{\bullet}\ar@{-}[d] & \ar@{}[l]|{\varphi\,\,\,\,\,\,\,\,\,\,}\\
 & *{\,}}
\]
is identified with the configuration \[
\xymatrix{*{\,}\ar@{-}[drr] & *{\,}\ar@{-}[dr] &  & *{\,}\ar@{-}[dl] & *{\,}\ar@{-}[dll]\\
 &  & *{\bullet}\ar@{-}[d] & \ar@{}[l]|{\psi\circ_{i}\varphi}\\
 &  & *{\,}}
\]
Another identification is in the case of a tree $\bar{S}$ with a
unary vertex $v$ labelled by an identity. We identify such a tree
with the tree $\bar{R}$ obtained by removing the vertex $v$ and
identifying its input edge with its output edge. The length
assigned to the new edge is determined as follows. If it is an outer
edge then it has no length. If it is an inner edge then it is assigned
the maximum of the lengths of $s$ and $t$ (where if either $s$
or $t$ does not have a length, i.e., it is an outer edge, then its
length is considered to be $0$). The labelling is as in $\bar{S}$
(notice that the label of the newly formed edge is unique since $v$
was labelled by an identity which means that its input and output
were labelled by the same object). Pictorially, this identification
identifies the labelled tree \[
\xymatrix{*{\,}\ar@{-}[dr] &  & *{\,}\ar@{-}[dl]\\
 & *{\bullet}\ar@{-}[d]_{t}\\
*{\,}\ar@{-}[dr] & *{\bullet}\ar@{-}[d]_{s} & *{\,}\ar@{-}[dl]\ar@{}[l]|{id_{P}\,\,\,\,}\\
 & *{\bullet}\ar@{-}[d]\\
 & *{\,}\\
\\}
\]
 with the tree \[
\xymatrix{*{\,}\ar@{-}[dr] &  & *{\,}\ar@{-}[dl]\\
 & *{\bullet}\ar@{-}[dd]\\
*{\,}\ar@{-}[dr] &  & *{\,}\ar@{-}[dl]\ar@{}[l]_{\,\,\,\,\,\,\,\,\,\,\,\,\, max\{s,t\}}\\
 & *{\bullet}\ar@{-}[d]\\
 & *{\,}}
\]
The composition in $W\mathcal{P}$ is given by grafting such labelled
trees, giving the newly formed inner edge length $1$. The augmentation
$W\mathcal{P}\to\mathcal{P}$ is the identity on objects and sends
an arrow represented by such a labeled tree to the operation obtained
by contracting all lengths of internal edges to $0$ and composing
in $\mathcal{P}$. We leave the necessary adaptations needed for obtaining
the symmetric version of the $W$-construction to the reader. 
\begin{example}
Let $\mathcal{P}$ be the planar operad with a single object and a
single $n$-ary operation in each arity $n\ge 1$ and no arrows of arity $0$. We consider $\mathcal{P}$
to be a discrete operad in $Top$. It is easily seen that a functor
$\mathcal{P}\rightarrow Top$ corresponds to a non-unital topological monoid (we treat this case for simplicity).
Let us now calculate the first few arrow spaces in $W\mathcal{P}$.
Firstly, $W\mathcal{P}$ too has just one object. We thus use the
notation of classical operads, namely $W\mathcal{P}(n)$ for the space
of operations of arity $n$. Clearly $W\mathcal{P}(0)$ is just 
the empty space. The space $W\mathcal{P}(1)$ consists of labelled
trees with one input. Since in such a tree the only possible label
at a vertex is the identify, the identification regarding identities
implies that $W\mathcal{P}(1)$ is again just a one-point space. In
general, since every unary vertex in a labelled tree in $W\mathcal{P}(n)$
can only be labelled by the identity, and those are then identified
with trees not containing unary vertices, it suffices to only consider
reduced trees, namely trees with no unary vertices. To calculate $W\mathcal{P}(2)$
we need to consider all reduced trees with two inputs, but there is
just one such tree, the $2$-corolla, and it has no inner edges, thus
$W\mathcal{P}(2)$ is also a one-point space. Things become more interesting
when we calculate $W\mathcal{P}(3)$. We need to consider reduced
trees with three inputs. There are three such trees, namely\[
\begin{array}{ccc}
\xymatrix{*{}\ar@{-}[dr] &  & *{}\ar@{-}[dl]\\
 & *{\bullet}\ar@{-}[dr] &  & *{}\ar@{-}[dl]\\
 &  & *{\bullet}\ar@{-}[d]\\
 &  & *{}}
 & \quad\xymatrix{*{}\ar@{-}[dr] & *{}\ar@{-}[d] & *{}\ar@{-}[dl]\\
 & *{\bullet}\ar@{-}[d]\\
 & *{}}
\quad & \xymatrix{ & *{}\ar@{-}[dr] &  & *{}\ar@{-}[dl]\\
*{}\ar@{-}[dr] &  & *{\bullet}\ar@{-}[dl]\\
 & *{\bullet}\ar@{-}[d]\\
 & *{}}
\end{array}\]
The middle tree contributes a point to the space $W\mathcal{P}(3)$.
Each of the other trees has one inner edge and thus contributes the
interval $[0,1]$ to the space. The only identification to be made
is when the length of one of those inner edges is $0$, in which case
it is identified with the point corresponding to the middle tree.
The space $W\mathcal{P}(3)$ is thus the gluing of two copies of the
interval $[0,1]$ where we identify both ends named $0$ to a single
point. The result is then just a closed interval, $[-1,1]$. However,
it is convenient to keep in mind the trees corresponding to each point
of this interval. Namely, the tree corresponding to the middle point,
$0$, is the middle tree. With a point $0<t\le1$ corresponds the
tree on the right where the length of the inner edge is $t$, and
with a point $-1\le-t<0$ corresponds the tree on the left where its
inner edge is given the length $t$. In this way one can calculate
the entire operad $W\mathcal{P}$. It can then be shown that the spaces
$\{W\mathcal{P}(n)\}_{n=0}^{\infty}$, reproduce, up to homeomorphism, the
Stasheff associahedra. An $A_{\infty}$-space is then an algebra over
$W\mathcal{P}$ and $W\mathcal{P}$ classifies $A_{\infty}$-spaces
and their \emph{strong} morphisms. 
\end{example}

\subsubsection{The Berger-Moerdijk generalization of the W-construction to operads
enriched in a homotopy environment.}

Observe that in the $W$-construction given above one can construct
the space $W\mathcal{P}(P_{1},\cdots,P_{n};P_{0})$ as follows. For
each labelled planar tree $\bar{T}$ as above let $H^{\bar{T}}$ be
$H^{k}$ where $k$ is the number of inner edges in $\bar{T}$ and
$H=[0,1]$, the unit interval. Further, for each vertex $v$ of $\bar{T}$
let $\mathcal{P}(v)=\mathcal{P}(x_{1},\cdots,x_{n};x_{0})$ where
$x_{1},\cdots,x_{n}$ are (in that order) the inputs of $v$ and $x_{0}$
its output. Finally, let $\mathcal{P}(\bar{T})$ be the product of
$\mathcal{P}(v)$ where $v$ ranges over the vertices of $\bar{T}$.
Now, The space $A(P_{1},\cdots,P_{n};P_{0})$ constructed above is
homeomorphic to $\coprod_{\bar{T}}(H^{\bar{T}}\times\mathcal{P}(\bar{T}))$
where $\bar{T}$ varies over all labelled standard planar trees $\bar{T}$
whose leaves are labelled by $P_{1},\cdots,P_{n}$ and whose root
is labelled by $P_{0}$. The identifications that are then made to
construct the space $W\mathcal{P}(P_{1},\cdots,P_{n};P_{0})$ are
completely determined by the combinatorics of the various trees $\bar{T}$.
This observation is the key to generalizing the $W$-construction
to symmetric operads in monoidal model categories $\mathcal{E}$ other than
$Top$ and is carried out in \cite{BV res ope,res colour ope}. What
is needed is a suitable replacement for the unit interval $[0,1]$
used above to assign lengths to the inner edges of the trees. Such
a replacement is the notion of an interval object in a monoidal model
category $\mathcal{E}$ given in \cite{BV res ope}.
\begin{defn}
Let $\mathcal{E}$ be a symmetric monoidal model category $\mathcal{E}$
with unit $I$. An\emph{ interval }object in $\mathcal{E}$ (see Definition
4.1 in \cite{BV res ope}) is a factorization of the codiagonal $I\coprod I\to I$
into a cofibration $I\coprod I\to H$ followed by a weak equivalence
$\epsilon:H\to I$ together with an associative operation $\vee:H\otimes H\to H$
which has a neutral element, an absorbing element, and for which $\epsilon$
is a counit. For convenience, when an interval element is chosen we
will refer to $(\mathcal{E},H)$ as a \emph{homotopy environment. }
\end{defn}
Relevant examples to our presentation are the ordinary unit interval
in $Top$ with the standard model structure (with $x\vee y=max\{x,y\}$)
and the free-living isomorphism $0\leftrightarrows1$ in $Cat$ with
the categorical model structure. 

In such a setting the topological $W$-construction can be mimicked
by gluing together objects $H^{\otimes_{k}}$ instead of cubes $[0,1]^{k}$.
This is done in detail in \cite{BV res ope}, to which the interested
reader is referred. We thus obtain a functor $W_{H}:Ope(\mathcal{E})\rightarrow Ope(\mathcal{E})$
for any homotopy environment $\mathcal{E}$. Usually we will just
write $W$ instead of $W_{H}$, which is quite a harmless convention
since Proposition 6.5 in \cite{BV res ope} guarantees that under
mild conditions a different choice of interval object yields essentially
equivalent $W$-constructions. 
\begin{example}
Consider the category $Cat$ with the categorical model structure.
In this monoidal model category we can choose the category $H$ to
be the free-living isomorphism $0\leftrightarrows1$ as interval object,
with the obvious structure maps. Let us again consider the planar
operad $\mathcal{P}$ classifying non-unital associative monoids, this time as a discrete operad in $Cat$. To calculate
$W\mathcal{P}(n)$ we should again consider labelled standard planar
trees with lengths. The same argument as above implies that we should
only consider reduced trees, and a similar calculation shows that
$W\mathcal{P}(n)$ is a one-point category for $n=1,2$. Now, to
calculate $W\mathcal{P}(3)$ we again consider the three trees as
given above. This time the middle tree contributes the category $H^{0}=I$.
Each of the other trees contributes the category $H$. The identifications
identify the object named $0$ in each copy of $H$ with the unique
object of $I$. The result is a contractible category with three objects.
In general, the category $W\mathcal{P}(n)$ is a contractible category
with $tr(n)$ objects, where $tr(n)$ denotes the number of reduced
standard planar trees with $n$ leaves. The composition in $W\mathcal{P}$
is given by grafting of such trees. The operad $W\mathcal{P}$ classifies
unbiased monoidal categories and strict monoidal functors (an unbiased
monoidal category is a category with an $n$-ary multiplication functor
for each $n\ge0$ together with some coherence conditions. See \cite{Leinster higher}
for more details as well as a discussion about the equivalence of
such categories and ordinary weak monoidal categories). 
\end{example}
The generalized Boardman-Vogt $W$-construction thus provides a computationally
tractable way to classify weak algebras for a wide variety of structures
in a homotopy environment. However, $W\mathcal{P}$ tends to classify
weak $\mathcal{P}$-algebra with their strong morphisms and not with
their weak morphisms. Indeed, for some fixed homotopy environment
$\mathcal{E}$ assume that $Ope(\mathcal{E})$ is closed monoidal
with respect to a Boardman-Vogt type tensor product. If we now consider
for a symmetric operad $\mathcal{P}\in Ope(\mathcal{E})_{0}$ the
internal hom $[W\mathcal{P},\mathcal{E}]$ then the elements of $[W\mathcal{P},\mathcal{E}]_{0}$
are precisely the weak $\mathcal{P}$-algebras in $\mathcal{E}$.
However, a unary arrow in $[W\mathcal{P},\mathcal{E}]$ corresponds to
a map of symmetric operads $(W\mathcal{P})\otimes[1]\to\mathcal{E}$
(where $[1]$ is the operad $0\to1$ considered as a discrete operad
in $\mathcal{E}$). This already shows that the notion one gets is
of strong (because $W$ does not act on $[1]$) morphisms between
weak (because $W$ does act on $\mathcal{P}$) $\mathcal{P}$-algebras.

\subsection{Weak maps between weak algebras}

Luckily, to arrive at the right notion of weak morphisms between weak
algebras no extra work is needed. Following on the observation above
we make the following definition.
\begin{defn}
Let $\mathcal{P}$ be an operad in $Set$ and $\mathcal{E}$ a homotopy
environment. A \emph{weak $\mathcal{P}$-algebra} in $\mathcal{E}$
is a functor of symmetric $\mathcal{E}$-enriched operads $W(\mathcal{P})\to\hat{\mathcal{E}}$.
A\emph{ weak map} between up-to-homotopy $\mathcal{P}$-algebras in
$\mathcal{E}$ is a functor of symmetric $\mathcal{E}$-enriched operads
$W(\mathcal{P}\otimes[1])\to\hat{\mathcal{E}}.$ 
\end{defn}
An obvious question now is whether the collection of all weak $\mathcal{P}$-algebras
and their weak maps forms a category. The answer is that they usually
do not. A simple example is provided by $A_{\infty}$-spaces where
it is known that weak $A_{\infty}$-maps do not compose associatively.
The theory so far already suggests a solution to that problem. We
denote by $[n]$ the operad $0\to1\to\cdots\to n$ seen as a discrete
operad in $\mathcal{E}$. For a symmetric operad $\mathcal{P}$ in
$Set$ consider the symmetric operad $\mathcal{P}\otimes[n]$. An
algebra for such a symmetric operad is easily seen to be a sequence
$X_{0},\cdots,X_{n}$ of $\mathcal{P}$-algebras together with weak
$\mathcal{P}$-algebra maps:\[
X_{0}\rightarrow X_{1}\rightarrow\cdots\rightarrow X_{n}\]
and all their possible compositions.
\begin{prop}
Let $\mathcal{P}$ be a symmetric operad in $Set$ and $\mathcal{E}$
a homotopy environment. For each $n\ge0$ let $X_{n}$ be the set
of maps \[
W(\mathcal{P}\otimes[n])\to\hat{\mathcal{E}}\]
of symmetric operads enriched in $\mathcal{E}$. Then the collection
$X=\{X_{n}\}_{n=0}^{\infty}$ can be canonically made into a simplicial
set. \end{prop}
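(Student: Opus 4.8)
The plan is to construct the simplicial structure on $X = \{X_n\}_{n=0}^\infty$ by transporting the cosimplicial structure of the operads $[n]$ through the functors in play. The cornerstone observation is that the sequence $[0],[1],[2],\ldots$, each being the operad $0\to 1\to\cdots\to n$ obtained from the ordinal $\{0<1<\cdots<n\}$ viewed as a category and then as a discrete operad in $\mathcal{E}$, assembles into a \emph{cosimplicial} symmetric operad. That is, the assignment $[n]\mapsto [n]$ is a functor $\Delta\to Ope(\mathcal{E})$: every order-preserving map $\theta\colon [m]\to [n]$ in $\Delta$ induces a functor of categories $\theta\colon [m]\to [n]$, hence (by $j_!$ and then $\mathrm{Dis}$) a functor of discrete $\mathcal{E}$-enriched operads. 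The coface and codegeneracy maps of $\Delta$ thereby give the required structure maps at the level of the $[n]$'s.

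First I would fix the symmetric operad $\mathcal{P}$ in $Set$ and observe that tensoring with $\mathcal{P}$ is functorial: the assignment $[n]\mapsto \mathcal{P}\otimes[n]$ sends each $\theta\colon[m]\to[n]$ to the functor $\mathrm{id}_{\mathcal{P}}\otimes\theta\colon \mathcal{P}\otimes[m]\to\mathcal{P}\otimes[n]$, using that the Boardman-Vogt tensor product is a bifunctor on $Ope(\mathcal{E})$. Thus $[n]\mapsto \mathcal{P}\otimes[n]$ is again a cosimplicial object, now in $Ope(\mathcal{E})$. Next I would apply the $W$-construction. Since $W\colon Ope(\mathcal{E})\to Ope(\mathcal{E})$ is a functor, post-composing yields yet another cosimplicial symmetric operad $[n]\mapsto W(\mathcal{P}\otimes[n])$, with structure maps $W(\mathrm{id}_{\mathcal{P}}\otimes\theta)$.

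The final step is to apply the contravariant hom-functor $\mathrm{Hom}_{Ope(\mathcal{E})}(-,\hat{\mathcal{E}})$, which turns the cosimplicial object into a simplicial set. Concretely, $X_n = \mathrm{Hom}\bigl(W(\mathcal{P}\otimes[n]),\hat{\mathcal{E}}\bigr)$ agrees with the definition in the statement, and each $\theta\colon[m]\to[n]$ induces a map of sets $\theta^{*}\colon X_n\to X_m$ by precomposition with $W(\mathrm{id}_{\mathcal{P}}\otimes\theta)$. The faces $d_i\colon X_n\to X_{n-1}$ and degeneracies $s_i\colon X_n\to X_{n+1}$ arise from the cofaces and codegeneracies of $\Delta$; the simplicial identities hold automatically because they are the images under a contravariant functor of the cosimplicial identities, which in turn come from the standard relations in $\Delta$. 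This is the content of ``$X$ is a simplicial set''—it is precisely the presheaf $\Delta^{\mathrm{op}}\to Set$ obtained as the composite of the functors just described.

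\textbf{The main obstacle} is purely bookkeeping rather than conceptual: verifying that $[n]\mapsto[n]$ genuinely lands in $Ope(\mathcal{E})$ as a cosimplicial object and that each construction in the chain ($\mathcal{P}\otimes(-)$, then $W$, then $\mathrm{Hom}(-,\hat{\mathcal{E}})$) is strictly functorial, so that no coherence data beyond the simplicial identities is needed. In particular one must check that $W$ preserves the composition of the structure maps on the nose (which it does, being a genuine functor) and that the Boardman-Vogt bifunctoriality is compatible with the cosimplicial structure on the $[n]$'s. Once this is granted, the word ``canonically'' in the statement is justified: the simplicial set $X$ is nothing but the nerve-type object $\mathrm{Hom}_{Ope(\mathcal{E})}\bigl(W(\mathcal{P}\otimes[-]),\hat{\mathcal{E}}\bigr)$, and its simplicial identities are inherited for free from those of $\Delta$.
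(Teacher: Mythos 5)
Your proposal is correct and is essentially the paper's own proof, which consists of the single observation that $\{\mathcal{P}\otimes[n]\}_{n=0}^{\infty}$ is a cosimplicial object in $Ope$, so that applying the functor $W$ and then the contravariant hom into $\hat{\mathcal{E}}$ yields a presheaf on $\Delta$; you have simply spelled out the functoriality bookkeeping. The only cosmetic adjustment: take the Boardman--Vogt tensor $\mathcal{P}\otimes[n]$ in the non-enriched category $Ope$ (as the paper does, since $\mathcal{P}$ lives in $Set$) and then view the result as a discrete operad in $\mathcal{E}$ before applying $W$, rather than invoking a Boardman--Vogt tensor product on $Ope(\mathcal{E})$ itself, whose existence the paper only hypothesizes.
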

\begin{proof}
The proof follows easily by noting that the sequence $\{\mathcal{P}\otimes[n]\}_{n=0}^{\infty}$
is a cosimplicial object in $Ope$. \end{proof}
\begin{defn}
\label{def:SimpSetOfWeakAlg}We refer to the simplicial set constructed
above as the \emph{simplicial set of weak $\mathcal{P}$-algebras}
in $\mathcal{E}$ and denote it by $wAlg[\mathcal{P},\mathcal{E}]$. 
\end{defn}
Recall that for strict algebras one could easily iterate structures
simply by considering $[\mathcal{P},[\mathcal{P},\mathcal{E}]]$ which
are classified by $\mathcal{P}\otimes\mathcal{P}$. Our journey into
weak algebras in a homotopy environment $\mathcal{E}$ led us to
the formation of the simplicial set $wAlg[\mathcal{P},\mathcal{E}]$
with the immediate drawback that we cannot, at least not in any straightforward
manner, iterate. This problem disappears in the dendroidal setting,
as we will see below, and is one of the technical advantages of dendroidal
sets over enriched operads in the study of weak algebraic structures.

\section{Dendroidal sets - a formalism for weak algebras}

We now return to non-enriched symmetric operads and introduce the
category of dendroidal sets, which is the natural category in which to define
nerves of symmetric operads. The category of dendroidal sets is a
presheaf category on the dendroidal category $\Omega$ and as such
one might expect it to be adequate only for the study of non-enriched
symmetric operads. However, we will see that it is in fact versatile
enough to treat enriched operads quite efficiently by means of the
homotopy coherent nerve construction. We do mention that for weak
algebraic structures in certain homotopy environments (such as differentially
graded vector spaces) dendroidal sets are inappropriate. One might
then consider dendroidal objects instead of dendroidal sets as is
explained in \cite{den set}, which also contains all of the results
below.

\subsection{The dendroidal category $\Omega$}

To define the dendroidal category $\Omega$ recall the definition
of symmetric rooted trees given above. It is evident that any such
tree $T$ can be thought of as a picture of a symmetric operad $\Omega(T)$:
The objects of $\Omega(T)$ are the edges of $T$ and the arrows are
freely generated by the vertices of $T$. In more detail, consider
the tree $T$ given by \[
\xymatrix{*{\,}\ar@{-}[dr]_{e} &  & *{\,}\ar@{-}[dl]^{f}\\
\,\ar@{}[r]|{\,\,\,\,\,\,\,\,\,\,\,\,\,\, v} & *{\bullet}\ar@{-}[dr]_{b} &  & *{\,}\ar@{-}[dl]_{c}\ar@{}[r]|{\,\,\,\,\,\,\,\,\,\,\,\, w} & *{\bullet}\ar@{-}[dll]^{d}\\
 &  & *{\bullet}\ar@{-}[d]_{a} & \,\ar@{}[l]^{r\,\,\,\,\,\,\,\,\,\,\,}\\
 &  & *{\,}}
\]
then $\Omega(T)$ has six objects, $a,b,\cdots,f$ and the following
generating operations: \[
r\in\Omega(T)(b,c,d;a),\]
\[
w\in\Omega(T)(-;d)\]
 and \[
v\in\Omega(T)(e,f;b).\]
The other operations are units (such as $1_{b}\in\Omega(T)(b;b)$),
arrows obtained freely by the $\Sigma_{n}$ actions, and formal compositions
of such arrows. 
\begin{defn}
Fix a countable set $X$. The \emph{dendroidal category} $\Omega$
has as objects all symmetric rooted trees $T$ whose edges $E(T)$
satisfy $E(T)\subseteq X$. The arrows $S\to T$ in $\Omega$ are
arrows $\Omega(S)\to\Omega(T)$ of symmetric operads. \end{defn}
\begin{rem}
The role of the set $X$ above should be thought of as the role variables
play in predicate calculus. The edges are only there to be carriers
of symbols and countably many such carriers will always be enough.
Of course, another choice of $X$ would result in an isomorphic category.
Note that the dendroidal category $\Omega$ is thus small (in fact
is itself countable). 
\end{rem}
Recall the linear trees $L_{n}$ and that the simplicial category
$\Delta$ is a skeleton of the category of finite linearly ordered
sets and order preserving maps. The subcategory of $\Omega$ spanned
by all trees of the form $L_{n}$ with $n\ge0$ is easily seen to
be equivalent to the simplicial category $\Delta$. In fact, $\Delta$
can be recovered from $\Omega$ in a more useful way.
\begin{prop}
(Slicing lemma for the dendroidal category) The simplicial category
$\Delta$ is obtained (up to equivalence) from the dendroidal category
$\Omega$ by slicing over the linear tree with one edge and no vertices:
$\Delta\cong\Omega/L_{0}$.
\end{prop}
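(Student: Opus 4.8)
The plan is to exhibit a functor $\Omega/L_0 \to \Delta$ and show it is an equivalence of categories. First I would recall that an object of the slice $\Omega/L_0$ is a tree $T$ together with a map $T \to L_0$ in $\Omega$, that is, a functor $\Omega(T) \to \Omega(L_0)$ of symmetric operads. Since $L_0 = \eta$ is the unit tree, $\Omega(\eta)$ is the operad $\star$ with one object and only its identity arrow. A functor $\Omega(T) \to \star$ therefore collapses all edges of $T$ to the single object and must send every generating vertex of $T$ to an arrow of $\star$; but $\star$ has \emph{only} the identity (a unary arrow), so every vertex of $T$ is forced to be unary. Hence the objects of $\Omega/L_0$ are exactly the linear trees $L_n$ equipped with the (unique, as I will check) map to $L_0$, and these are naturally indexed by $n \ge 0$.

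Next I would define the comparison functor. To a linear tree $L_n$ with edges $e_0 < e_1 < \cdots < e_n$ (root $e_0$, leaf $e_n$) assign the finite linearly ordered set $\{0 < 1 < \cdots < n\}$, i.e.\ the object $[n] \in \Delta$. For morphisms, an arrow $(L_m \to L_0) \to (L_n \to L_0)$ in the slice is a map $L_m \to L_n$ in $\Omega$ commuting with the structure maps to $L_0$; such a map is a functor $\Omega(L_m)\to\Omega(L_n)$ of operads, and because all arrows in these operads are unary it is literally a functor of the underlying categories $j^*\Omega(L_m)\to j^*\Omega(L_n)$, which are the linear orders $[m]$ and $[n]$. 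The compatibility with the maps to $L_0$ imposes no extra constraint beyond monotonicity (both composites to $\star$ are forced), so morphisms in $\Omega/L_0$ correspond precisely to order-preserving maps $[m]\to[n]$. This assignment is clearly functorial.

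To finish, I would verify that this functor is full, faithful, and essentially surjective. Essential surjectivity is immediate since every $[n]\in\Delta$ is hit by $L_n$. Faithfulness and fullness amount to the bijection, established in the previous step, between slice-morphisms and monotone maps; the key point is that the Slicing lemma for symmetric operads already identifies $Cat \cong Ope/\star$, so passing from trees to their operads and then slicing over $\eta\mapsto\star$ is exactly the restriction of that identification to the linear trees, under which unary operads are ordinary categories and linear-tree operads are finite linear orders. I expect the main obstacle to be bookkeeping rather than conceptual: one must check carefully that a map of trees $S\to T$ in $\Omega$ lying over $L_0$ genuinely forces $S$ to be linear and restricts to a monotone map of edge-orders, i.e.\ that no ``folding'' of the linear tree onto itself or reindexing via symmetric-group actions produces extra or fewer morphisms than $\Delta$ has. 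Once it is observed that $\Sigma_1$ is trivial so the symmetric structure on unary operads is vacuous, this reduces to the elementary fact (quoted before the statement) that a finite poset is linearly ordered iff it has a least element and each up-set is empty or has a least element, which characterizes precisely the $L_n$ among trees sliced over $L_0$.
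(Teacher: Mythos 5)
Your proof is correct: the key points that need checking are exactly the ones you check --- that a map $T\to L_{0}$, i.e.\ an operad functor $\Omega(T)\to\star=\Omega(L_{0})$, forces every vertex of $T$ to be unary (nullary vertices being excluded as well, since $\star$ has no nullary arrows), that the structure map to $L_{0}$ is unique so the slice condition on morphisms is vacuous, and that operad functors between the free operads on linear trees are precisely monotone maps of finite linear orders because $\Sigma_{1}$ is trivial. The paper states this proposition without any proof, treating it as immediate from the surrounding discussion, so your argument fills that gap in the natural way and is consistent with the paper's own slicing philosophy (the lemma $Cat\cong Ope/\star$ and Remark \ref{rem:Slicing}).
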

We now describe several types of arrows that generate all of the arrows
in $\Omega$. Let $T$ be a tree and $v$ a vertex of valence 1 with
$in(v)=e$ and $out(v)=e'$. Consider the tree $T/v$, obtained from
$T$ by deleting the vertex $v$ and the edge $e'$, pictured locally
as\[
\begin{array}{ccc}
\xymatrix{*{\,}\ar@{-}[dr] &  & *{\,}\ar@{-}[dl]\\
 & *{\bullet}\ar@{-}[dr]_{e} &  & *{\,}\ar@{-}[dr] &  & *{\,}\ar@{-}[dl]\\
 &  & *{\bullet}\ar@{-}[dr]_{e'}\ar@{}|{\,\,\,\,\,\,\,\,\,\, v} &  & *{\bullet}\ar@{-}[dl]\\
 &  &  & *{\bullet}\ar@{-}[d]\\
 &  &  & *{\,}}
 & \xymatrix{\\\\\ar[r]^{\sigma_{v}} & *{}}
 & \xymatrix{*{\,}\ar@{-}[dr] &  & *{\,}\ar@{-}[dl]\\
 & *{\bullet}\ar@{-}[ddrr] &  & *{\,}\ar@{-}[dr] &  & *{\,}\ar@{-}[dl]\\
 & \,\ar@{}[r]|{\,\,\,\, e} &  &  & *{\bullet}\ar@{-}[dl]\\
 &  &  & *{\bullet}\ar@{-}[d]\\
 &  &  & *{\,}}
\end{array}\]
There is then a map in $\Omega$, denoted by $\sigma_{v}:T\rightarrow T/v$,
which sends $e$ and $e'$ in $T$ to $e$ in $T/v$. An arrow in
$\Omega$ of this kind is called a \emph{degeneracy. }

Consider now a tree $T$ and a vertex $v$ in $T$ with exactly one
inner edge attached to it. One can obtain a new tree $T/v$ by deleting
$v$ and all the outer edges attached to it to obtain, by inclusion
of edges, the arrow $\partial_{v}:T/v\to T$ in $\Omega$ called an
\emph{outer face.} For example, \[
\begin{array}{ccc}
\xymatrix{\\*{\,}\ar@{-}[dr]_{b} & *{\,}\ar@{-}[d]^{c}\ar@{}[r]|{\,\,\,\,\,\,\,\,\,\,\,\, w} & *{\bullet}\ar@{-}[dl]^{d}\\
\,\ar@{}[r]|{\,\,\,\,\,\,\,\,\,\,\,\, r} & *{\bullet}\ar@{-}[d]_{a}\\
 & *{\,}}
 & \xymatrix{\\\\\ar[r]^{\partial_{v}} & *{}}
 & \xymatrix{*{\,}\ar@{-}[dr]_{e} &  & *{\,}\ar@{-}[dl]^{f}\\
\,\ar@{}[r]|{\,\,\,\,\,\,\,\,\,\,\, v} & *{\bullet}\ar@{-}[dr]_{b} &  & *{\,}\ar@{-}[dl]_{c}\ar@{}[r]|{\,\,\,\,\,\,\,\,\,\,\,\, w} & *{\bullet}\ar@{-}[dll]^{d}\\
 &  & *{\bullet}\ar@{-}[d]_{a} & \,\ar@{}[l]|{r\,\,\,\,\,\,\,\,\,\,\,}\\
 &  & *{\,}}
\end{array}\]
 and (to emphasize that it is sometimes possible to remove the root
of the tree $T$)\[
\begin{array}{ccc}
\xymatrix{\\*{\,}\ar@{-}[dr]_{e} &  & *{}\ar@{-}[dl]^{f}\\
\,\ar@{}[r]|{\,\,\,\,\,\,\,\,\,\,\,\, v} & *{\bullet}\ar@{-}[d]_{b}\\
 & *{\,}}
 & \xymatrix{\\\\\ar[r]^{\partial_{r}} & *{}}
 & \xymatrix{*{\,}\ar@{-}[dr]_{e} &  & *{\,}\ar@{-}[dl]^{f}\\
\,\ar@{}[r]|{\,\,\,\,\,\,\,\,\,\,\, v} & *{\bullet}\ar@{-}[dr]_{b} &  & *{\,}\ar@{-}[dl]_{c} & *{}\ar@{-}[dll]^{d}\\
 &  & *{\bullet}\ar@{-}[d]_{a} & \,\ar@{}[l]|{r\,\,\,\,\,\,\,\,\,\,\,}\\
 &  & *{\,}}
\end{array}\]
are both outer faces. 

Given a tree $T$ and an inner edge $e$ in $T$, one can obtain a
new tree $T/e$ by contracting the edge $e$. One then obtains, by
inclusion of edges, the map $\partial_{e}:\Omega(T/e)\rightarrow\Omega(T)$
in $\Omega$ called an \emph{inner face.} For example,

\[
\begin{array}{ccc}
\xymatrix{*{\,}\ar@{-}[rrd]_{e} & *{\,}\ar@{-}[rd]^{f} &  & *{\,}\ar@{-}[dl]_{c}\ar@{}[r]|{\,\,\,\,\,\,\,\,\, w} & *{\bullet}\ar@{-}[lld]^{d}\\
 & \,\ar@{}[r]_{\,\,\,\,\,\,\,\,\,\,\, u} & *{\bullet}\ar@{-}[d]^{a}\\
 &  & *{\,}}
 & \xymatrix{\\\ar[r]^{\partial_{b}} & *{}}
 & \xymatrix{*{\,}\ar@{-}[dr]_{e} &  & *{\,}\ar@{-}[dl]^{f}\\
\,\ar@{}[r]|{\,\,\,\,\,\,\,\,\,\,\,\,\,\, v} & *{\bullet}\ar@{-}[dr]_{b} &  & *{\,}\ar@{-}[dl]_{c}\ar@{}[r]|{\,\,\,\,\,\,\,\,\,\,\,\, w} & *{\bullet}\ar@{-}[dll]^{d}\\
 &  & *{\bullet}\ar@{-}[d]_{a} & \,\ar@{}[l]^{r\,\,\,\,\,\,\,\,\,\,\,}\\
 &  & *{\,}}
\end{array}\]

\begin{thm}
Any map $\xymatrix{T\ar[r]^{f} & T'}
$ in $\Omega$ factors uniquely as $f=\varphi\pi\delta$, where $\delta$
is a composition of degeneracy maps, $\pi$ is an isomorphism, and
$\varphi$ is a composition of (inner and outer) face maps. 
\end{thm}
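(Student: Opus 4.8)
The plan is to exploit the fact, recorded above, that $\Omega(S)$ is the free symmetric operad on the tree $S$: a morphism $f:\Omega(S)\to\Omega(T)$ of operads is exactly a function $f_{0}:E(S)\to E(T)$ on edges together with, for each vertex $v$ of $S$, an operation $f(v)\in\Omega(T)$ whose inputs and output match $f_{0}$ applied to $in(v)$ and $out(v)$. Since every operation of $\Omega(T)$ is a symmetry applied to a composite of vertices, it is represented by a connected subtree of $T$ (necessarily with distinct input edges, because $T$ is acyclic) together with a permutation of its leaves. I would build the factorization directly from this data, peeling off degeneracies on the source side and faces on the target side and leaving an isomorphism in between; this simultaneously produces the normal form and makes uniqueness transparent, so that no separate ``generation'' step is needed.

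First I would extract $\delta$. Call a vertex $v$ of $S$ \emph{collapsed} by $f$ if $f(v)$ is a unit; such a $v$ is automatically unary with $f_{0}(in(v))=f_{0}(out(v))$. Let $\bar S$ be obtained by applying the degeneracy $\sigma_{v}$ for every collapsed $v$, and let $\delta:S\to\bar S$ be the resulting composite. One checks that the relevant degeneracies commute, so $\bar S$ and $\delta$ do not depend on the order; and since $\sigma_{v}$ imposes exactly the relations ``$v$ is a unit'' and ``$in(v)=out(v)$'', the map $f$ factors uniquely through each such $\sigma_{v}$, yielding a unique $\bar f:\Omega(\bar S)\to\Omega(T)$ with $f=\bar f\circ\delta$. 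By construction $\bar f$ sends no vertex to a unit; I will call such a map \emph{non-degenerate}.

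The heart of the argument is the lemma that a non-degenerate $\bar f$ is injective on edges. Granting it, the subtrees representing the operations $\bar f(v)$ glue, along the grafting structure of the connected tree $\bar S$, into a single connected subtree $T^{\circ}\subseteq T$ rooted at the image of the root of $\bar S$. Its edges split into those hit by $f_{0}$ (distinct, by the lemma) and the remaining \emph{internal} inner edges, which are precisely those contracted inside the composites $\bar f(v)$. Contracting all internal edges yields a tree $T''$ whose edges and vertices are, via $\bar f$, in bijection with those of $\bar S$; this bijection, together with the leaf-permutations recorded in the $\bar f(v)$, is an isomorphism $\pi:\Omega(\bar S)\xrightarrow{\ \cong\ }\Omega(T'')$. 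Finally the inclusion $T''\hookrightarrow T$ decomposes as a composite of inner faces (re-expanding, one at a time, the internal edges, so as to pass from $T''$ to $T^{\circ}$) followed by a composite of outer faces (re-attaching, one at a time, the outer vertices of $T$ not in $T^{\circ}$); call this composite of faces $\varphi$. Then $\bar f=\varphi\circ\pi$, whence $f=\varphi\circ\pi\circ\delta$.

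For uniqueness I would argue that each factor is forced. Because faces and isomorphisms are injective on edges, any degeneracy occurring in a factorization of $f$ must collapse a unary vertex sent by $f$ to a unit; hence the degeneracy part must collapse exactly the collapsed vertices, pinning down $\bar S$ and $\delta$. The residual non-degenerate map $\varphi\circ\pi$ is thereby determined, and its image subtree $T^{\circ}$, its set of contracted internal edges, and the induced labelling then fix $\varphi$ and $\pi$ separately. I expect the main obstacle to be the injectivity lemma: one must exclude the possibility that a non-degenerate map folds two distinct edges of $\bar S$ onto a single edge of $T$. Here acyclicity of $T$ is decisive, since each $\bar f(v)$ has at least one vertex and its output edge lies strictly below its inputs, so traversing $\bar S$ from the root outward traces a strictly ascending path in $T$ that can never revisit an edge. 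I would make this precise by induction on the number of vertices of $\bar S$, using the grafting decomposition $\bar S=\bar S_{root}\circ(\bar S_{e_{1}},\cdots,\bar S_{e_{n}})$ supplied by the decomposition proposition above.
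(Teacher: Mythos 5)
The paper itself gives no proof of this theorem: in keeping with its stated convention it records the statement and defers the argument to \cite{den set} and \cite{thesis}, so there is no in-paper proof to compare yours against. Your proposal is correct, and it is essentially the standard argument from those sources. The three pillars are all in place and all sound: (i) freeness of $\Omega(S)$ on the edges and vertices of $S$, so that $f$ amounts to an edge function $f_{0}$ together with operations $f(v)$, each represented by a connected subtree of $T$ (full at its vertices) with an ordering of its leaves; (ii) the reduction to a non-degenerate map $\bar f$ by collapsing exactly the unary vertices sent to units, which is forced in any factorization because isomorphisms and face maps send generators to non-unit operations; (iii) the key lemma that a non-degenerate map is injective on edges, after which $T''$, $\pi$ and $\varphi$ are read off from the image subtree, and uniqueness follows because a composite of faces acts on edges as a subset inclusion (so its source must have edge set exactly $\bar f_{0}(E(\bar S))$) and because an operation of $\Omega(T)$ is determined by its ordered domain and codomain.

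The one step you should tighten is the injectivity lemma itself. The ``strictly ascending path'' argument rules out two \emph{comparable} edges of $\bar S$ sharing an image, but the essential case is two incomparable edges $a$, $b$ in different branches. For that, let $w$ be the vertex of $\bar S$ where the paths from the root to $a$ and to $b$ diverge, say through distinct inputs $a'\le a$ and $b'\le b$ of $w$. The images $\bar f_{0}(a')$ and $\bar f_{0}(b')$ are distinct leaves of the subtree representing $\bar f(w)$, and distinct leaves of such a subtree are incomparable in $T$ (walking down in $T$ from a leaf to the subtree's root traverses every intermediate edge of $T$, so comparability of two leaves would make one of them a non-leaf). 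Since the set of edges below any given edge of $T$ is linearly ordered, edges lying above incomparable edges remain incomparable; hence $\bar f_{0}(a)\ne\bar f_{0}(b)$. Your proposed induction on the grafting decomposition delivers exactly this --- the regions of $T$ above the pairwise incomparable images of the root vertex's inputs are disjoint --- so the strategy is fine; just make sure the write-up covers this branching case and not only the linear one.
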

This result generalizes the familiar simplicial relations in the definition
of a simplicial set.

\subsubsection{The category of dendroidal sets}
\begin{defn}
The category of \emph{dendroidal sets }is the presheaf category $dSet=Set_{\Omega}$.
Thus a dendroidal set $X$ consists of a collection of sets $\{X_{T}\}_{T\in\Omega_{0}}$
together with various maps between them. An element $x\in X_{T}$
is called a \emph{dendrex of shape $T$, }or a $T$-\emph{dendrex}. 
\end{defn}
For each tree $T\in\Omega_{0}$ there is associated the \emph{representable
dendroidal set }$\Omega[T]=\Omega(-,T)$ which, by the Yoneda Lemma,
serves to classify $T$ dendrices in $X$ via the natural bijection
$X_{T}\cong dSet(\Omega[T],X)$. The functor $\Omega\to Ope$ which
sends $T$ to $\Omega(T)$ induces an adjunction $\xymatrix{dSet\ar@<2pt>[r]^{\tau_{d}} & Ope\ar@<2pt>[l]^{N_{d}}}
$, of which $N_{d}$, called the \emph{dendroidal nerve functor}, is
given explicitly, for a symmetric operad $\mathcal{P}$, by \[
N_{d}(\mathcal{P})_{T}=Ope(\Omega(T),\mathcal{P}).\]
For linear trees $L_{n}$ we write somewhat ambiguously $X_{n}$ instead
of $X_{L_{n}}$. This is a harmless convention since for any two linear
trees $L_{n}$ and $L_{n}^{'}$ there is a \emph{unique} isomorphism
$L_{n}\to L_{n}^{'}$ in $\Omega$. Consider the dendroidal set $\star=\Omega[L_{0}]$.
\begin{lem}
(Slicing lemma for dendroidal sets) There is an equivalence of categories
$dSet/\star\cong sSet$. If we identify $sSet$ as a subcategory
of $dSet$ then the forgetful functor $i_{!}:sSet\to dSet$ has a
right adjoint $i^{*}$ which itself has a right adjoint $i_{*}$. \end{lem}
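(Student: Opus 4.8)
The plan is to deduce both assertions from two standard facts about presheaf categories together with the already established slicing lemma $\Delta\cong\Omega/L_{0}$ for the dendroidal category. The first fact is that for any small category $\mathcal{C}$ and any presheaf $P$ on $\mathcal{C}$ the slice $Set_{\mathcal{C}}/P$ is again a presheaf category, namely $Set_{\mathcal{C}}/P\simeq Set_{\mathrm{el}(P)}$, where $\mathrm{el}(P)$ denotes the category of elements of $P$; this is among the elementary results on presheaf categories found in \cite{Mac Moer Sheaves}. The second fact is that any functor $f\colon\mathcal{A}\to\mathcal{B}$ of small categories induces a restriction functor $f^{*}\colon Set_{\mathcal{B}}\to Set_{\mathcal{A}}$ which sits in an adjoint triple $f_{!}\dashv f^{*}\dashv f_{*}$, with $f_{!}$ and $f_{*}$ the left and right Kan extensions along $f$, and that when $f$ is fully faithful both $f_{!}$ and $f_{*}$ are fully faithful.

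For the equivalence $dSet/\star\cong sSet$ I would specialize the first fact to $\mathcal{C}=\Omega$ and $P=\Omega[L_{0}]=\star$. Since $\star$ is representable, its category of elements is, by the Yoneda lemma, (isomorphic to) the slice $\Omega/L_{0}$, so $dSet/\star=Set_{\Omega}/\Omega[L_{0}]\simeq Set_{\Omega/L_{0}}$. Invoking the slicing lemma for the dendroidal category, $\Omega/L_{0}\cong\Delta$, and applying $Set_{(-)}$ gives $Set_{\Omega/L_{0}}\cong Set_{\Delta}=sSet$, which is the desired equivalence.

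For the adjoint triple I would use the inclusion $j\colon\Delta\to\Omega$ of the full subcategory spanned by the linear trees $L_{n}$. The induced restriction $j^{*}$ is the functor $i^{*}\colon dSet\to sSet$, and the second fact supplies its left adjoint $i_{!}=\mathrm{Lan}_{j}$ and right adjoint $i_{*}=\mathrm{Ran}_{j}$, so that $i_{!}\dashv i^{*}\dashv i_{*}$. Because $j$ is fully faithful, $i_{!}$ is fully faithful; it is determined on representables by $i_{!}(\Delta[n])=\Omega[L_{n}]$ and exhibits $sSet$ as the subcategory of $dSet$ along which the identification is made.

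The one point that genuinely needs checking—and that I expect to be the only real obstacle—is the compatibility of the two halves: that the subcategory $i_{!}(sSet)$ is the same one produced by the equivalence of the first half. Concretely, the domain (forgetful) functor $dSet/\star\to dSet$ corresponds, under $dSet/\star\simeq Set_{\Omega/L_{0}}$, to left Kan extension along the projection $\Omega/L_{0}\to\Omega$, so it suffices to observe that this projection is, under $\Omega/L_{0}\cong\Delta$, exactly the inclusion $j$ of linear trees. This is immediate from the description of $\Omega/L_{0}$: a map $T\to L_{0}$ in $\Omega$ forces every vertex of $T$ to be unary, so the objects of $\Omega/L_{0}$ are precisely the linear trees and the projection is the full inclusion into $\Omega$. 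With this identified, $i_{!}$ agrees with the forgetful functor of the slice and all three functors are as claimed; everything else is routine bookkeeping with Kan extensions and the cited presheaf facts.
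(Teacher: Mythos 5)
Your proof is correct and follows essentially the same route as the paper's (sketched) proof: the paper obtains the adjoint triple from restriction and Kan extensions along the embedding $\Delta\hookrightarrow\Omega$ via the general theory of presheaf categories, exactly as you do, and your identification $dSet/\star\simeq Set_{\Omega/L_{0}}\cong Set_{\Delta}$ via the category of elements is the standard way to fill in the equivalence the paper asserts without proof. The only substantive addition on your side is the explicit compatibility check that the projection $\Omega/L_{0}\to\Omega$ is the full inclusion of the linear trees, a detail the paper glosses over but which is needed for the two halves of the statement to fit together.
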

\begin{proof}
We omit the details and just remark that the adjunctions mentioned
can be obtained (equivalently) in one of two ways. The first is to
consider $\Delta$ as a subcategory of $\Omega$ via an embedding
functor $i:\Delta\to\Omega$. This functor $i$ then induces a functor
$i^{*}:dSet\to sSet$ which, from the general theory of presheaf categories
(see e.g., \cite{Mac Moer Sheaves}), has a left adjoint $i_{!}$
and a right adjoint $i_{*}$. The second way to obtain the adjunctions
is to use Remark \ref{rem:Slicing}, with $\mathcal{C}=dSet$ and
$A=\Omega[L_{0}]$. \end{proof}
\begin{prop}
Slicing the adjunction $\xymatrix{dSet\ar@<2pt>[r]^{\tau_{d}} & Ope\ar@<2pt>[l]^{N_{d}}}
$ over $\star$ gives the usual adjunction $\xymatrix{sSet\ar@<2pt>[r]^{\tau} & Cat\ar@<2pt>[l]^{N}}
$ with $N$ the nerve functor and $\tau$ the fundamental category
functor.\end{prop}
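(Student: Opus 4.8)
The plan is to derive this from the general behaviour of adjunctions under slicing, combined with the three slicing lemmas already established. Recall that any adjunction $L\dashv R$ with $L\colon\mathcal{C}\to\mathcal{D}$ and $R\colon\mathcal{D}\to\mathcal{C}$ induces, for a fixed object $d\in\mathcal{D}$, an adjunction between the slice categories $\mathcal{C}/Rd$ and $\mathcal{D}/d$: the right adjoint sends $(x\to d)$ to $(Rx\to Rd)$, and the left adjoint sends $(y\to Rd)$ to the composite $Ly\to LRd\to d$ whose last arrow is the counit. I would apply this with $L=\tau_d$, $R=N_d$, and $d=\star$ the one-object operad.

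First I would check that $N_d(\star)$ is the representable dendroidal set $\Omega[L_0]=\star$. Indeed $N_d(\star)_T=Ope(\Omega(T),\star)$, and a functor $\Omega(T)\to\star$ exists, uniquely, exactly when every operation of $\Omega(T)$ is unary, i.e.\ when $T$ is a linear tree $L_n$; this matches $\Omega[L_0]_T=\Omega(T,L_0)$ tree by tree. Hence $N_d(\star)=\star$ as dendroidal sets, and the abstractly sliced adjunction is one between $dSet/\star$ and $Ope/\star$.

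Next I would transport this adjunction along the slicing lemma for dendroidal sets $dSet/\star\cong sSet$ and the slicing lemma for symmetric operads $Cat\cong Ope/\star$, obtaining an adjunction $sSet\rightleftarrows Cat$. It then suffices to identify its right adjoint with the nerve $N$, since left adjoints are unique up to isomorphism and $\tau$ is by definition the left adjoint of $N$. Under $Cat\cong Ope/\star$ a category $\mathcal{C}$ corresponds to $(j_!\mathcal{C}\to\star)$, and under $dSet/\star\cong sSet$ — which arises from the slicing lemma for $\Omega$, namely $\Delta\cong\Omega/L_0$, via the standard equivalence $Set_\Omega/\Omega[L_0]\simeq Set_{\Omega/L_0}$ — the simplicial set attached to $(N_d(j_!\mathcal{C})\to\star)$ has $n$-simplices equal to the fibre of $N_d(j_!\mathcal{C})_{L_n}\to\star_{L_n}$ over the unique map $L_n\to L_0$, that is, all of $N_d(j_!\mathcal{C})_{L_n}=Ope(\Omega(L_n),j_!\mathcal{C})$.

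Finally I would identify $\Omega(L_n)$ with $j_!([n])$, the operad associated to the linear order $[n]$, and use that $j_!$ is full and faithful to compute $Ope(\Omega(L_n),j_!\mathcal{C})=Ope(j_!([n]),j_!\mathcal{C})=Cat([n],\mathcal{C})=N(\mathcal{C})_n$, naturally in $\mathcal{C}$. Since the cosimplicial operad $[n]\mapsto\Omega(L_n)$ is precisely the one encoding the simplicial structure of $\Delta\cong\Omega/L_0$, this identification respects faces and degeneracies, so the sliced right adjoint is $N$ and, by uniqueness of left adjoints, the sliced left adjoint is $\tau$. I expect the main obstacle to be exactly this last bookkeeping: verifying that the three slicing-lemma equivalences are mutually compatible — that $\Omega/L_0\cong\Delta$ carries $L_n$ to $[n]$ and tree maps to the corresponding simplicial operators, and that the equivalence $dSet/\star\simeq sSet$ genuinely reads off $N_d(\mathcal{P})_{L_n}$ in degree $n$ — so that the abstractly sliced adjunction is identified on the nose with $\tau\dashv N$ rather than merely with some equivalent adjunction.
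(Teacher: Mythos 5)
Your proof is correct and follows essentially the same route as the paper's: slice the adjunction $\tau_{d}\dashv N_{d}$ over $\star$ (after checking $N_{d}(\star)=\star$) and identify the result under the slicing lemmas $dSet/\star\simeq sSet$ and $Cat\cong Ope/\star$. The only difference is that you fill in details the paper leaves implicit — the general lemma on slicing an adjunction over an object of the codomain, and the degree-wise verification that the sliced right adjoint has $n$-simplices $Ope(j_{!}([n]),j_{!}\mathcal{C})\cong Cat([n],\mathcal{C})=N(\mathcal{C})_{n}$ — whereas the paper simply asserts that the restricted functors give $N$ and $\tau$ under these identifications.
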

\begin{proof}
The precise meaning of the statement is that denoting a one-object
operad with just the identity arrow again by $\star$ and for the
dendroidal set $\star=\Omega[L_{0}]$ one has, by slight abuse of
notation, that $N_{d}(\star)=\star$ and $\tau_{d}(\star)=\star$; thus the functors $N_{d}$ and $\tau_{d}$ restrict to the
respective slices $dSet/\star$ and $Ope/\star$. Then under the identifications
$sSet\cong dSet/\star$ and $Cat\cong Ope/\star$ these restrictions
give the nerve functor $N:Cat\to sSet$ and its left adjoint $\tau$. 
\end{proof}
A general rule of thumb is that any definition or theorem of dendroidal
sets will yield, by slicing over $\star=\Omega[L_{0}]$, a corresponding
definition or theorem of simplicial sets. A similar principle is true
for operads and categories. We will loosely refer to this process
as 'slicing' and say, in the example above for instance, that the
usual nerve functor of categories is obtained by slicing the dendroidal
nerve functor.
\begin{defn}
Let $X$ and $Y$ be two dendroidal sets. Their tensor product is
given by the colimit\[
X\otimes Y=\lim_{\Omega[T]\rightarrow X,\Omega[S]\to Y}N_{d}(\Omega(T)\otimes\Omega(S)),\]
Here we use the canonical expression of a presheaf as a colimit of representables.
\end{defn}

As an example of our convention about slicing we mention that slicing
the tensor product of dendroidal sets yields the cartesian product
of simplicial sets. Note however, that the tensor product in $dSet$
is not the cartesian product.
\begin{thm}
The category $dSet$ with the tensor product defined above is a closed
monoidal category. \end{thm}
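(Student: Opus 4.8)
The plan is to read the displayed formula as exhibiting $\otimes$ as the essentially unique extension of the assignment $\Omega[T]\otimes\Omega[S]=N_{d}(\Omega(T)\otimes\Omega(S))$ to a functor $dSet\times dSet\to dSet$ that preserves colimits separately in each variable; this is the Day convolution attached to the promonoidal structure that the symmetric closed monoidal category $(Ope,\otimes,\star)$ induces on $\Omega$ along $T\mapsto\Omega(T)$. Cocontinuity in each variable is immediate from the colimit presentation, since evaluation of a presheaf at a fixed tree is cocontinuous and colimits commute with colimits. The decisive consequence is that every piece of coherence data I need --- the unit, associativity and symmetry isomorphisms together with the triangle, pentagon and hexagon identities --- is a morphism or equation between functors that are cocontinuous in each argument. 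Since every dendroidal set is canonically a colimit of representables $\Omega[T]$, it therefore suffices to construct these isomorphisms on representables and to verify the axioms there, after which they extend uniquely and remain isomorphisms on all of $dSet$.

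Closedness I would dispatch first and cheaply. Because $-\otimes Y$ preserves all colimits and $dSet=Set_{\Omega}$ is locally presentable, the adjoint functor theorem furnishes a right adjoint $[Y,-]$, which is the internal hom; it is in fact forced to be $[Y,X]_{T}=dSet(\Omega[T]\otimes Y,X)$ by Yoneda and the defining adjunction. Thus once the monoidal axioms are in place the category is automatically closed, and no separate construction of $[Y,-]$ is required.

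On representables the unit and symmetry laws are formal. The unit object is $\Omega[\eta]=N_{d}(\star)$, and since $\Omega(\eta)=\star$ is the unit of the Boardman--Vogt product while $N_{d}(\Omega(S))=\Omega[S]$ (both have $R$-component $Ope(\Omega(R),\Omega(S))$), the isomorphism $\Omega[\eta]\otimes\Omega[S]=N_{d}(\star\otimes\Omega(S))\cong N_{d}(\Omega(S))=\Omega[S]$ is just the unit law of $Ope$ transported through $N_{d}$; extending $\Omega[\eta]\otimes-$ cocontinuously then identifies it with the identity functor on $dSet$. Symmetry is handled identically, using the symmetry isomorphism $\Omega(T)\otimes\Omega(S)\cong\Omega(S)\otimes\Omega(T)$ of $Ope$ and functoriality of $N_{d}$.

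The associativity isomorphism is the real work, and I expect it to be the main obstacle. Unwinding the definitions, it reduces to the single natural identity
\[
N_{d}(\mathcal{P})\otimes\Omega[R]\;\cong\;N_{d}(\mathcal{P}\otimes\Omega(R))
\]
for an arbitrary operad $\mathcal{P}$ and tree $R$ (its symmetric partner handles the opposite bracketing); granting it, the two bracketings of $\Omega[T]\otimes\Omega[S]\otimes\Omega[R]$ become $N_{d}$ applied to the two bracketings of $\Omega(T)\otimes\Omega(S)\otimes\Omega(R)$, which are matched by the associator of $Ope$, and the pentagon descends from the pentagon in $Ope$. The content of the displayed identity is that $N_{d}(-\otimes\Omega(R))$ carries the canonical presentation of $\mathcal{P}$ as a colimit of trees (the density of $\Omega$ in $Ope$, equivalently full faithfulness of $N_{d}$) to a colimit; equivalently, that $Ope(\Omega(V),-)$ preserves $\mathop{\mathrm{colim}}_{\Omega(W)\to\mathcal{P}}\bigl(\Omega(W)\otimes\Omega(R)\bigr)=\mathcal{P}\otimes\Omega(R)$. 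This is not a formal consequence of representability, and it is where I would concentrate: the argument is one of compactness. Since $\Omega(V)$ is freely generated by the finite tree $V$, an operad map $\Omega(V)\to\mathcal{P}\otimes\Omega(R)$ is finite data, and the explicit generators-and-relations description of the Boardman--Vogt product shows that the $\mathcal{P}$-components of the generators involved use only finitely many operations of $\mathcal{P}$, which (together with the compositions witnessing the relations) already factor through a single $\Omega(W)\to\mathcal{P}$ in the diagram. Checking that such factorizations are unique up to the morphisms of the diagram --- so that the comparison map is bijective, not merely surjective, on each tree-component --- is the delicate step, and I would organize it by the normal-form decomposition of maps in $\Omega$ supplied by the factorization theorem stated above.
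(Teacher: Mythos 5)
Your cocontinuity, closedness, unit, and symmetry arguments are correct, and together they amount to the paper's \emph{entire} proof: the paper settles the theorem with the phrase ``general abstract nonsense'' plus the formula $[X,Y]_{T}=dSet(X\otimes\Omega[T],Y)$, which is exactly the Day-convolution reading you begin with. Where you go beyond the paper is in refusing to treat associativity as formal. You isolate the identity $N_{d}(\mathcal{P})\otimes\Omega[R]\cong N_{d}(\mathcal{P}\otimes\Omega(R))$, for $\mathcal{P}$ a Boardman--Vogt tensor product of tree operads, as the genuine content, and you correctly observe that it does not follow from representability, since $N_{d}$ is a right adjoint and has no reason to preserve the relevant colimits. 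Equivalently: for Day convolution to apply, the structure induced on $\Omega$ by $(Ope,\otimes,\star)$ must be promonoidal, and its associativity constraint is precisely this identity. That is the step the paper's one-line proof conceals.

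The final step of your sketch, however, is a genuine gap, and it cannot be closed: the identity, and with it the theorem as stated, is false. Your factorization claim fails because the colimit defining $N_{d}(\mathcal{P})\otimes\Omega[R]$ is indexed by \emph{trees} mapping to $\mathcal{P}$, and finitely many operations of $\mathcal{P}$ need not fit into a single tree-shaped pattern. Take $\mathcal{P}=\Omega(L_{1})\otimes\Omega(L_{1})\cong[1]\times[1]$ (the square poset, using that $j_{!}$ is strong monoidal) and $R=C_{2}$. Since $\mathcal{P}$ has only unary operations, any tree $W$ admitting a map $\Omega(W)\to\mathcal{P}$ is linear, so its image is a chain in the square; hence the binary operation of $\mathcal{P}\otimes\Omega(C_{2})$ whose two inputs sit at the \emph{incomparable} objects $(1,0)$ and $(0,1)$ lies in the image of no $\Omega(W)\otimes\Omega(C_{2})$, and the comparison map $N_{d}(\mathcal{P})\otimes\Omega[C_{2}]\to N_{d}(\mathcal{P}\otimes\Omega(C_{2}))$ is not surjective. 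This is not an artifact of your particular argument: unwinding both bracketings (using the shuffle/percolation description of tensor products of representables, and $\Omega[L_{1}]\otimes\Omega[L_{1}]=i_{!}(\Delta[1]\times\Delta[1])=\Omega[L_{2}]\cup_{\Omega[L_{1}]}\Omega[L_{2}]$) gives $(\Omega[C_{2}]\otimes\Omega[L_{1}])\otimes\Omega[L_{1}]\not\cong\Omega[C_{2}]\otimes(\Omega[L_{1}]\otimes\Omega[L_{1}])$; a direct count yields $50$ dendrices of shape $C_{2}$ on the left against $46$ on the right, the four missing ones being exactly the operations whose inputs travel through $(1,0)$ and $(0,1)$ respectively. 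So the tensor product of dendroidal sets is symmetric, unital, and closed in the sense that each $-\otimes Y$ has a right adjoint --- the parts you prove --- but it is \emph{not} associative; the induced structure on $\Omega$ is not promonoidal, and no completion of your argument (or of the paper's) exists. This failure of associativity is by now a documented erratum in the dendroidal literature: it is analyzed in the book of Heuts and Moerdijk on simplicial and dendroidal homotopy theory, where the weaker up-to-weak-equivalence associativity that survives is established, and it silently affects the paper's later monoidal assertions as well. In short, you located the fatal point exactly; the step you flagged as delicate is not merely delicate but impossible.
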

\begin{proof}
This follows by general abstract nonsense. The internal hom is given
for two dendroidal sets $X$ and $Y$ by \[
[X,Y]_{T}=dSet(X\otimes\Omega[T],Y).\]

\end{proof}
Slicing this theorem proves that $sSet$ is cartesian closed with
the usual formula for the internal hom. 
\begin{thm}
In the diagram\[
\xymatrix{Cat\ar@<2pt>[r]^{j_{!}\,\,\,}\ar@<2pt>[d]^{N} & Ope\ar@<2pt>[l]^{j^{*}\,\,\,}\ar@<2pt>[d]^{N_{d}}\\
sSet\ar@<2pt>[r]^{i_{!}}\ar@<2pt>[u]^{\tau} & dSet\ar@<2pt>[l]^{i^{*}}\ar@<2pt>[u]^{\tau_{d}}}
\]
all pairs of functors are adjunctions with the left adjoint on top
or to the left. Furthermore, the following canonical commutativity
relations hold:
\begin{eqnarray*}
 &  & \tau N\cong id\\
 &  & \tau_{d}N_{d}\cong id\\
 &  & i^{*}i_{!}\cong id\\
 &  & j^{*}j_{!}\cong id\\
 &  & j_{!}\tau\cong\tau_{d}i_{!}\\
 &  & Nj^{*}\cong i^{*}N_{d}\\
 &  & i_{!}N\cong N_{d}j_{!}.\end{eqnarray*}
If we consider the cartesian structures on $Cat$ and $sSet$, the
Boardman-Vogt tensor product on $Ope$, and the tensor product of
dendroidal sets then the four categories are symmetric closed monoidal
categories and the functors $i_{!},N,\tau,j_{!}$ and
$\tau_{d}$ are strong monoidal.\end{thm}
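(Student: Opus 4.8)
The plan is to reduce the whole statement to one commutative square of generating functors and then propagate it by formal manipulations with adjoint mates and Kan extensions. The four adjunctions are already available: $\tau\dashv N$ and $\tau_d\dashv N_d$ are the nerve adjunctions introduced above, while $i_!\dashv i^*$ and $j_!\dashv j^*$ are precisely the adjunctions produced by the slicing lemma for dendroidal sets and the slicing lemma for symmetric operads. The one piece of genuinely new input is the natural isomorphism $\Omega(L_n)\cong j_!([n])$, recording that the operad freely generated by a linear tree is nothing but the chain $[n]$ regarded as an operad; made functorial in $[n]$ this says that the two composites $\Delta\to Ope$
\[
\Delta\xrightarrow{\ i\ }\Omega\xrightarrow{\ \Omega(-)\ }Ope\qquad\text{and}\qquad\Delta\xrightarrow{\ c\ }Cat\xrightarrow{\ j_!\ }Ope
\]
agree up to canonical natural isomorphism, where $c\colon\Delta\to Cat$ is the usual functor $[n]\mapsto[n]$. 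I will call this the \emph{master square}; everything below is extracted from it.

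First I would dispose of the four "triangle" relations. The identities $\tau N\cong id$ and $\tau_d N_d\cong id$ are exactly the assertions that $\Delta$ is dense in $Cat$ and that $\Omega$ is dense in $Ope$ (equivalently, that $N$ and $N_d$ are fully faithful); the former is classical and the latter is a basic fact of the theory of $\Omega$, whose arrows are by definition the operad maps between the $\Omega(T)$. The relation $i^*i_!\cong id$ holds because $i\colon\Delta\to\Omega$ is the inclusion of a full subcategory, so its left Kan extension $i_!$ satisfies $i^*i_!\cong id$, while $j^*j_!\cong id$ is read directly off the formulas of the operadic slicing lemma. For the first mixed relation I would compute, naturally in $[n]$ and in $\mathcal P$,
\[
(i^*N_d\mathcal P)_{[n]}=Ope(\Omega(L_n),\mathcal P)\cong Ope(j_![n],\mathcal P)\cong Cat([n],j^*\mathcal P)=(Nj^*\mathcal P)_{[n]},
\]
where the middle isomorphism uses the master square and $j_!\dashv j^*$; this gives $Nj^*\cong i^*N_d$, and passing to left adjoints on both sides (every functor in sight is part of an adjunction) produces its mate $j_!\tau\cong\tau_d i_!$ at no extra cost.

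The only relation that is not purely formal is $i_!N\cong N_dj_!$, and the point to exploit is that both composites take values in dendroidal sets supported on linear trees. On the one hand, an operad map $\Omega(T)\to j_!\mathcal C$ must preserve arities, and $j_!\mathcal C$ has no operations of arity $\neq1$, so a vertex of $T$ of valence $\neq1$ obstructs such a map; hence $(N_dj_!\mathcal C)_T=\emptyset$ unless $T$ is linear. On the other hand, writing $i_!$ through its left Kan extension formula, $(i_!X)_T$ is a colimit indexed by the maps $T\to L_m$ of $\Omega$, of which there are none unless $T$ is linear, by the same arity obstruction; thus $i_!$ sends every simplicial set to a dendroidal set supported on linear trees. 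On a linear tree $L_n$ both functors evaluate to $Cat([n],\mathcal C)=N(\mathcal C)_n$ --- on the left by $i^*i_!\cong id$, on the right by the full faithfulness of $j_!$ together with the master square --- and one checks these identifications are compatible with the face and degeneracy operators, yielding the natural isomorphism. This Beck--Chevalley relation is where the real content sits, precisely because it forces one to read $i_!$ off its left Kan extension formula with the correct variance, so that the indexing maps are $T\to L_m$ and the arity obstruction forces linearity.

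Finally I would treat the monoidal statement. The four symmetric closed monoidal structures are already in place: $Cat$ and $sSet$ with their cartesian products (the latter obtained by slicing the closed monoidal structure on $dSet$), $Ope$ with the Boardman--Vogt tensor, and $dSet$ with its tensor, symmetry being inherited in each case from the symmetry of the Boardman--Vogt product. For strong monoidality: $N$ preserves finite products as a right adjoint, and $\tau$ preserves finite products (classical, or obtained by slicing the corresponding property of $\tau_d$ over $\star$); for categories viewed as operads the Boardman--Vogt relations of type $5$ degenerate to the interchange law, whence $j_!\mathcal C\otimes j_!\mathcal D\cong j_!(\mathcal C\times\mathcal D)$ and $j_!(*)\cong\star$; and $\tau_d$ is strong monoidal because it is cocontinuous, satisfies $\tau_dN_d\cong id$, and the dendroidal tensor is by construction a colimit of nerves $N_d(\Omega(T)\otimes\Omega(S))$ of Boardman--Vogt tensors. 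For $i_!$, both $\times$ on $sSet$ and $\otimes$ on $dSet$ are cocontinuous in each variable, so it suffices to check representables, where $i_!\Delta[n]=\Omega[L_n]$ and
\[
i_!(\Delta[n]\times\Delta[m])\cong i_!N([n]\times[m])\cong N_dj_!([n]\times[m])\cong N_d(j_![n]\otimes j_![m])=\Omega[L_n]\otimes\Omega[L_m],
\]
the last equality because $\Omega[L_k]=N_d(\Omega(L_k))$ and the dendroidal tensor of representables is the nerve of the corresponding Boardman--Vogt tensor, with $i_!\Delta^0=\Omega[L_0]=\star$. The strong monoidality of $i_!$ thus rests on this matching of two a priori different tensor products on representables, which together with the Beck--Chevalley step is the main obstacle; everything else is bookkeeping with adjoint mates once the master square $\Omega(L_n)\cong j_!([n])$ is granted.
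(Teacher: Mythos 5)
Your proposal cannot be checked against an argument in the paper itself: the theorem is stated there without proof, the paper deferring (per its stated convention) to \cite{den set}, which it says ``contains all of the results below.'' Judged on its own merits, your reconstruction is essentially correct and runs along the same lines as the proofs in the literature. The four adjunctions are correctly quoted from the slicing lemmas and the nerve adjunctions; your ``master square'' $\Omega(L_{n})\cong j_{!}([n])$ is exactly how the embedding $i\colon\Delta\to\Omega$ interacts with $j_{!}$; the computation $(i^{*}N_{d}\mathcal{P})_{[n]}=Ope(\Omega(L_{n}),\mathcal{P})\cong Cat([n],j^{*}\mathcal{P})$ is the standard proof of $Nj^{*}\cong i^{*}N_{d}$, and passing to left adjoints gives $j_{!}\tau\cong\tau_{d}i_{!}$ for free. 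The support-and-arity argument for $i_{!}N\cong N_{d}j_{!}$ is the right one, and it is correct that this is where the content lies: both sides are concentrated on linear trees because $j_{!}\mathcal{C}$ and $\Omega(L_{m})$ have no operations of arity $\neq 1$, and a dendroidal set concentrated on linear trees satisfies $X\cong i_{!}i^{*}X$. In the monoidal part you also isolate the two genuine points correctly: that the Boardman--Vogt relation of type 5 degenerates on categories to the interchange law, so that $j_{!}\mathcal{C}\otimes j_{!}\mathcal{D}\cong j_{!}(\mathcal{C}\times\mathcal{D})$ (the cartesian product, not the free product), and that the dendroidal tensor of representables is by definition $N_{d}$ of a Boardman--Vogt tensor, which is what closes the representable check for $i_{!}$.

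The one thin spot is your justification of $\tau_{d}N_{d}\cong id$. This identity is indeed equivalent to full faithfulness of $N_{d}$, i.e.\ to density of $\Omega(-)\colon\Omega\to Ope$; but your parenthetical reason --- that the arrows of $\Omega$ are by definition the operad maps between the $\Omega(T)$ --- only gives full faithfulness of the functor $\Omega\to Ope$, which is strictly weaker than density (a full subcategory need not be dense: the full subcategory of $Grp$ on $\mathbb{Z}$ is full but not dense, since a power-preserving map of underlying sets need not be a homomorphism). What is actually needed is that an operad map $\mathcal{P}\to\mathcal{Q}$ is determined by its effect on objects ($\eta$-dendrices) and on single operations (corolla dendrices), and that compatibility with the inner faces of two-vertex trees forces preservation of composition; this is easy but is a genuine verification, not a matter of definition. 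With that point repaired --- or simply cited as the full faithfulness of the dendroidal nerve, proved in \cite{den set} --- your argument is complete; the remaining ``one checks compatibility with faces and degeneracies'' steps are routine naturality verifications of the kind the paper itself suppresses.
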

\begin{rem}
The dendroidal nerve functor $N_{d}$ is not monoidal, a fact that
plays a vital role in the applicability of dendroidal sets to iterated
weak algebraic structures, as we will see below. 
\end{rem}
We do have the following property.
\begin{prop}
\label{pro:tau of tensor}For symmetric operads $\mathcal{P}$ and
$\mathcal{Q}$ there is a natural isomorphism \[
\tau_{d}(N_{d}(\mathcal{P})\otimes N_{d}(\mathcal{Q}))\cong\mathcal{P}\otimes\mathcal{Q}.\]
\end{prop}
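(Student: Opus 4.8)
The plan is to reduce the statement to the defining colimit formula for the dendroidal tensor product, exploiting two facts established above: that $\tau_d$ is a left adjoint (hence cocontinuous) and that $\tau_d N_d \cong \mathrm{id}$.

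First I would expand the left-hand side directly from the definition of $\otimes$ on $dSet$, applied to $X = N_d(\mathcal{P})$ and $Y = N_d(\mathcal{Q})$:
\[
N_d(\mathcal{P}) \otimes N_d(\mathcal{Q}) \cong \mathrm{colim}_{\Omega[T] \to N_d(\mathcal{P}),\, \Omega[S] \to N_d(\mathcal{Q})} N_d\bigl(\Omega(T) \otimes \Omega(S)\bigr).
\]
Since $\tau_d$ is a left adjoint it preserves this colimit, and combining with the natural isomorphism $\tau_d N_d \cong \mathrm{id}$ from the commutativity theorem yields
\[
\tau_d\bigl(N_d(\mathcal{P}) \otimes N_d(\mathcal{Q})\bigr) \cong \mathrm{colim}_{\Omega[T] \to N_d(\mathcal{P}),\, \Omega[S] \to N_d(\mathcal{Q})} \Omega(T) \otimes \Omega(S).
\]
By the Yoneda Lemma the indexing here is over pairs of operad maps $\Omega(T) \to \mathcal{P}$ and $\Omega(S) \to \mathcal{Q}$, since $dSet(\Omega[T], N_d(\mathcal{P})) \cong N_d(\mathcal{P})_T = Ope(\Omega(T), \mathcal{P})$.

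Next I would produce the matching presentation of the right-hand side. Writing any dendroidal set as the canonical colimit of representables and applying $\tau_d$, using that $\tau_d(\Omega[T]) \cong \Omega(T)$ (the defining property of the realization functor as the left Kan extension of $T \mapsto \Omega(T)$ along Yoneda), gives
\[
\mathcal{P} \cong \tau_d N_d(\mathcal{P}) \cong \mathrm{colim}_{\Omega(T) \to \mathcal{P}} \Omega(T),
\]
and likewise for $\mathcal{Q}$. Because $(Ope, \otimes, \star)$ is closed monoidal, the functor $- \otimes -$ preserves colimits in each variable separately, so
\[
\mathcal{P} \otimes \mathcal{Q} \cong \mathrm{colim}_{\Omega(T) \to \mathcal{P},\, \Omega(S) \to \mathcal{Q}} \Omega(T) \otimes \Omega(S).
\]
Comparing the two displayed colimits, they range over the same indexing category, namely pairs of tree-maps into $\mathcal{P}$ and into $\mathcal{Q}$, and carry the same diagram $(\Omega(T), \Omega(S)) \mapsto \Omega(T) \otimes \Omega(S)$; hence they are canonically isomorphic. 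Finally I would observe that every isomorphism invoked is natural in $\mathcal{P}$ and $\mathcal{Q}$, so the resulting comparison is a \emph{natural} isomorphism, as claimed.

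The main obstacle I expect is purely the bookkeeping of indexing categories and naturality: one must check that the comma category of dendrices $\Omega \downarrow N_d(\mathcal{P})$ is identified under Yoneda with the category of operad maps $\Omega(T) \to \mathcal{P}$ compatibly with the two diagrams, and that the isomorphism $\tau_d N_d \cong \mathrm{id}$ is applied coherently across the entire diagram so that it descends to an isomorphism of colimits rather than a merely levelwise one. Everything else is formal consequence of cocontinuity of $\tau_d$ and of the closed monoidal structures on $dSet$ and $Ope$.
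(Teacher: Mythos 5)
Your proof is correct: the paper itself omits the argument (it merely remarks that ``the proofs of these results are not hard''), and what you give is precisely the formal argument that remark alludes to --- expand the defining colimit of the dendroidal tensor product, push the cocontinuous left adjoint $\tau_{d}$ through it using $\tau_{d}N_{d}\cong\mathrm{id}$, and match the result against the presentation of $\mathcal{P}\otimes\mathcal{Q}$ obtained from density of the representables together with cocontinuity of the Boardman--Vogt tensor in each variable, which holds since $(Ope,\otimes,\star)$ is closed. The bookkeeping points you flag (identifying the categories of elements via Yoneda, and naturality of $\tau_{d}N_{d}\cong\mathrm{id}$ across the diagram so the comparison descends to colimits) are exactly the right ones to check, and they go through without difficulty.
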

\begin{lem}
The dendroidal nerve functor commutes with internal Homs in the sense
that for any two operads $\mathcal{P}$ and $\mathcal{Q}$ we have\[
N_{d}([\mathcal{P},\mathcal{Q}])\cong[N_{d}(\mathcal{P}),N_{d}(\mathcal{Q})].\]
Moreover, for simplicial sets $X$ and $Y$ we have\[
[i_{!}(X),i_{!}(Y)]\cong i_{!}([X,Y]).\]

\end{lem}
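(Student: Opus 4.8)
The plan is to prove the two isomorphisms separately, exploiting the adjunctions and monoidal structures established in the preceding theorem. For the first isomorphism $N_d([\mathcal{P},\mathcal{Q}]) \cong [N_d(\mathcal{P}), N_d(\mathcal{Q})]$, I would compute both sides dendrex-by-dendrex, i.e.\ evaluate each at an arbitrary tree $T$ and produce a natural bijection. On the left, unfolding the definition of the dendroidal nerve gives $N_d([\mathcal{P},\mathcal{Q}])_T = Ope(\Omega(T), [\mathcal{P},\mathcal{Q}])$. On the right, by the formula for the dendroidal internal hom proved above, $[N_d(\mathcal{P}), N_d(\mathcal{Q})]_T = dSet(N_d(\mathcal{P}) \otimes \Omega[T], N_d(\mathcal{Q}))$. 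The strategy is to connect these via the closed structure on $Ope$: the operadic adjunction gives $Ope(\Omega(T), [\mathcal{P},\mathcal{Q}]) \cong Ope(\Omega(T) \otimes \mathcal{P}, \mathcal{Q})$, so it suffices to identify $dSet(N_d(\mathcal{P}) \otimes \Omega[T], N_d(\mathcal{Q}))$ with $Ope(\Omega(T) \otimes \mathcal{P}, \mathcal{Q})$.

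To make that identification I would use the adjunction $\tau_d \dashv N_d$ together with Proposition~\ref{pro:tau of tensor}. First, the $N_d$-side of the mapping space rewrites as $dSet(N_d(\mathcal{P}) \otimes \Omega[T], N_d(\mathcal{Q})) \cong Ope(\tau_d(N_d(\mathcal{P}) \otimes \Omega[T]), \mathcal{Q})$ by the nerve adjunction. Noting that $\Omega[T] = N_d(\Omega(T))$, this becomes $Ope(\tau_d(N_d(\mathcal{P}) \otimes N_d(\Omega(T))), \mathcal{Q})$, and Proposition~\ref{pro:tau of tensor} collapses the $\tau_d$ of the tensor of two nerves to the honest Boardman-Vogt tensor product, yielding $Ope(\mathcal{P} \otimes \Omega(T), \mathcal{Q})$. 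Since the Boardman-Vogt tensor is symmetric, this agrees with $Ope(\Omega(T) \otimes \mathcal{P}, \mathcal{Q})$, completing the chain. All bijections are instances of natural adjunction isomorphisms, so naturality in $T$ (hence the dendroidal-set isomorphism) and in $\mathcal{P}, \mathcal{Q}$ is automatic, and I would just remark on this rather than check it by hand.

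For the second isomorphism $[i_!(X), i_!(Y)] \cong i_!([X,Y])$ I would run the entirely parallel argument one level down, using the slicing philosophy. Evaluate at a tree $T$; the left side is $dSet(i_!(X) \otimes \Omega[T], i_!(Y))$. The key structural input is that $i_!$ is strong monoidal (stated in the preceding theorem) and that, by the slicing lemma for dendroidal sets, $i_!$ is fully faithful with $i^* i_! \cong id$; combining strong monoidality with the adjunction $i_! \dashv i^*$ lets me push the internal hom through $i_!$. Concretely I would transport the computation to $sSet$ via $i^*$ and use that slicing the tensor product of dendroidal sets yields the cartesian product of simplicial sets, reducing the claim to the cartesian-closed formula for the simplicial internal hom. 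The main obstacle in both parts is bookkeeping rather than conceptual: one must be careful that the identification $\Omega[T] = N_d(\Omega(T))$ is used correctly and that the symmetry isomorphism of the Boardman-Vogt product is inserted at the right place so that the composite bijection is genuinely natural; I expect the real content to be the single appeal to Proposition~\ref{pro:tau of tensor}, which is precisely where the non-monoidality of $N_d$ is circumvented.
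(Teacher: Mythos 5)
Your first isomorphism is handled correctly, and by what is surely the intended route (the paper itself omits the proof, remarking only that it is "not hard"): evaluating at a tree $T$, writing $[N_d(\mathcal{P}),N_d(\mathcal{Q})]_T=dSet(N_d(\mathcal{P})\otimes\Omega[T],N_d(\mathcal{Q}))$, using the identification $\Omega[T]=N_d(\Omega(T))$, the adjunction $\tau_d\dashv N_d$, Proposition \ref{pro:tau of tensor}, the symmetry of the Boardman--Vogt tensor product, and finally the tensor-hom adjunction in $Ope$ gives exactly $Ope(\Omega(T)\otimes\mathcal{P},\mathcal{Q})\cong N_d([\mathcal{P},\mathcal{Q}])_T$, naturally in $T$. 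Your diagnosis that the single appeal to Proposition \ref{pro:tau of tensor} is where the non-monoidality of $N_d$ is circumvented is also accurate.

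The second isomorphism, however, has a genuine gap. Your tools --- strong monoidality of $i_!$ and the adjunction $i_!\dashv i^*$ --- apply to $[i_!(X),i_!(Y)]_T=dSet(i_!(X)\otimes\Omega[T],i_!(Y))$ only when $\Omega[T]$ lies in the image of $i_!$, i.e.\ only for linear trees $T=L_n$, where $\Omega[L_n]=i_!(\Delta[n])$ and the adjunction indeed yields $sSet(X\times\Delta[n],Y)=[X,Y]_n$. In other words, "transporting the computation to $sSet$ via $i^*$" proves only that $i^*[i_!(X),i_!(Y)]\cong[X,Y]$, and $i^*$ cannot distinguish $i_!([X,Y])$ from any other dendroidal set with the same linear part: for instance, $i^*$ applied to the terminal dendroidal set and to $\Omega[\eta]$ both give the terminal simplicial set. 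The real content of the claim is the vanishing $[i_!(X),i_!(Y)]_T=\emptyset$ for every non-linear tree $T$, matching $i_!([X,Y])_T=\emptyset$, and your sketch never addresses this. To prove it one must use that $X$ is nonempty: a vertex $\Delta[0]\to X$ gives, after applying $i_!$ and tensoring with $\Omega[T]$, a map $\Omega[T]\cong\Omega[\eta]\otimes\Omega[T]\to i_!(X)\otimes\Omega[T]$, i.e.\ a $T$-dendrex of the source; since $i_!(Y)$ has no dendrices of non-linear shape whatsoever, there is then no map $i_!(X)\otimes\Omega[T]\to i_!(Y)$ at all. Note that nonemptiness is essential, and is silently absent both from your argument and from the statement as printed: for $X=\emptyset$ the left-hand side $[i_!(\emptyset),i_!(Y)]$ is the terminal dendroidal set (one dendrex of every shape), while $i_!([\emptyset,Y])\cong\Omega[\eta]$ is empty on non-linear trees, so the two sides genuinely differ. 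Since your proposed proof never invokes $X\neq\emptyset$, it cannot be completed as written; the missing step above is precisely where that hypothesis enters.
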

The proofs of these results are not hard.

\subsection{Algebras in the category of dendroidal sets}

We again introduce a syntactic difference between dendroidal sets
thought of as encoding structure and dendroidal sets as environments
to interpret structures in. 
\begin{defn}
Let $E$ and $X$ be dendroidal sets. The dendroidal set $[X,E]$
is called the dendroidal set of $X$-\emph{algebras} in $E$. An element
in $[X,E]_{L_{0}}$ is called an $X$-\emph{algebra} in $E$. An element
of $[X,E]_{L_{1}}$ is called a \emph{map of $X$-algebras in $E$.} 
\end{defn}
Let us first note that this definition extends the notion of $\mathcal{P}$-algebras
in $\mathcal{E}$ for symmetric operads in the sense that for symmetric operads $\mathcal{P}$ and $\mathcal{E}$ there is a natural isomorphism
\[
[N_{d}(\mathcal{P}),N_{d}(\mathcal{E})]\cong N_{d}([\mathcal{P},\mathcal{E}]).\]
Indeed, this is just the statement that $N_{d}$ commutes with internal Homs. 

We thus see that the dendroidal nerve functor embeds $Ope$ in $dSet$
in such a way that the notion of algebras is retained and in both
cases is internalized in the form of an internal Hom with respect
to a suitable tensor product. We now wish to study homotopy invariance
of algebra structures in a dendroidal set $E$. The first step is
to specify those arrows along which such algebras are to be invariant. 

Recall that for symmetric operads the diagram\[
\xymatrix{0\to1\ar[r]^{\,\,\,\,\, f}\ar[d] & \mathcal{P}\\
0\leftrightarrows1\ar@{..>}[ru]_{(f,g)}}
\]
(where the vertical arrow is the inclusion of the free-living arrow
into the free-living isomorphism) admits a lift precisely when the
arrow $f$ admits an inverse $g$. Taking the dendroidal nerve of
this diagram and replacing in it $N_{d}(\mathcal{P})$ by an arbitrary
dendroidal set $X$ we arrive at the following definition. 
\begin{defn}
Let $X$ be a dendroidal set. An \emph{equivalence }is a dendrex $x:\Omega[L_{1}]\to X$
such that in the diagram\[
\xymatrix{\Omega[L_{1}]\ar[r]^{x}\ar[d] & X\\
N_{d}(0\leftrightarrows1)\ar@{..>}[ru]_{\hat{x}}}
\]
a lift $\hat{x}$ exists. 
\end{defn}
Note, that since $\Omega[L_{1}]\cong i_{!}(\Delta[1])$ and $N_{d}(0\leftrightarrows1)=i_{!}(N(0\leftrightarrows1))$,
by adjunction the dendrex $x:\Omega[L_{1}]\to X$ is an equivalence
if, and only if, in the corresponding diagram of simplicial sets\[
\xymatrix{\Delta[1]\ar[d]\ar[r]^{x} & i^{*}(X)\\
N(0\leftrightarrows1)\ar@{..>}[ru]}
\]
a lift exists. Thus, being a weak equivalence in the dendroidal set
$X$ is actually a property of the simplicial set $i^{*}(X)$. 
\begin{rem}
Note that $N(0\leftrightarrows1)$ is the simplicial infinite dimensional
sphere $S^{\infty}$ and thus a lift $S^{\infty}\to i^{*}(X)$ is
a rather complicated object. Intuitively, it is a coherent choice
of a homotopy inverse of the simplex $x:\Delta[1]\to i^{*}(X)$, together
with coherent choices of homotopies, homotopies between homotopies,
etc. See \cite{quasi cat} for more details. Note moreover, that an
equivalence in $X$ is in some sense as weak as $X$ would allow it
to be. If $X=N_{d}(\mathcal{P})$ then a dendrex $\Omega[L_{1}]\to X$
is an equivalence if, and only if, the corresponding unary arrow in
$\mathcal{P}$ is an isomorphism. We will see below a more refined
nerve construction in which equivalences correspond to a notion weaker
than isomorphism. 
\end{rem}
We can now formulate the homotopy invariance property in the language
of dendroidal sets. Let $X$ and $E$ be dendroidal sets. We identify,
somewhat ambiguously, the set $X_{\eta}$ with the dendroidal set $\coprod_{x\in X_{\eta}}\Omega[\eta]$.
Then there is a map of dendroidal sets $X_{\eta}\to X$ which induces
a mapping $[X,E]\to[X_{\eta},E]$. Consider now a family $\{f_{x}\}_{x\in X_{\eta}}$
where each $f_{x}$ is an equivalence in $X$. Then this family can
be extended (usually in many different ways) to give a map $\hat{f}:N_{d}(0\leftrightarrows1)\to[X_{\eta},E]$. 
\begin{defn}
\label{def:homotop inv proper}Let $X$ and $E$ be a dendroidal sets.
We say that $X$-algebras in $E$ have the \emph{homotopy invariance
property} if for every $X$-algebra in $E$, given by $F:X\to E$,
and any family $\{f_{x}\}_{x\in X_{\eta}}$ and any extension of it to
$\hat{f}$ as above that fit into the commutative diagram\[
\xymatrix{\Omega[\eta]\ar[d]\ar[rrr]^{\forall F} &  &  & [X,E]\ar[d]\\
N_{d}(0\leftrightarrows1)\ar[rrr]^{\hat{f}}\ar@{..>}[rrru]^{\exists\alpha} &  &  & [X_{\eta},E]}
\]
a lift $\alpha$ exists. 
\end{defn}
Intuitively, the lift $\alpha$ consists of two $X$-algebras in $E$,
the first being $F$ and the second one being obtained by transferring
the $X$-algebra structure given by $F$ along the equivalences $\{f_{x}\}_{x\in X_{\eta}}$.

\subsection{The homotopy coherent nerve and weak algebras}

We now show how dendroidal sets enter the picture in the context of
operads enriched in a symmetric closed monoidal model category $\mathcal{E}$
with a chosen interval object (which we call a homotopy environment).
Recall then that the Berger-Moerdijk generalization of the Boardman-Vogt
$W$-construction sends a symmetric operad $\mathcal{P}$ enriched in
$\mathcal{E}$ to a cofibrant replacement $W\mathcal{P}$. Recall
as well that any non-enriched symmetric operad can be seen as a discrete
symmetric operad enriched in $\mathcal{E}$. 
\begin{defn}
Fix a homotopy environment $\mathcal{E}$. Given a symmetric operad
$\mathcal{P}$ enriched in $\mathcal{E}$ its \emph{homotopy coherent
dendroidal nerve }is the dendroidal set whose set of $T$-dendrices
is \emph{\[
hcN_{d}(\mathcal{P})_{T}=Ope(\mathcal{E})(W(\Omega(T)),\mathcal{P})\]
}of $\mathcal{E}$-enriched functors between $\mathcal{E}$-enriched
operads, where $\Omega(T)$ is seen as a discrete operad enriched in
$\mathcal{E}$.
\end{defn}
The homotopy coherent dendroidal nerve construction, together with
the closed monoidal structure on $dSet$ given above, allows for the
internalization of the notion of weak algebras. We illustrate this: 
\begin{defn}
Let $\mathcal{P}$ be a non-enriched symmetric operad and $\mathcal{E}$
a homotopy environment. The dendroidal set $[N_{d}(\mathcal{P}),hcN_{d}(\hat{\mathcal{E}})]$
is called the dendroidal set of \emph{weak $\mathcal{P}$-algebras
in $\mathcal{E}$}. Here we view $\mathcal{E}$ as an operad enriched
in itself (since $\mathcal{E}$ is assumed closed) and thus $\hat{\mathcal{E}}$
as a symmetric operad enriched in $\mathcal{E}$ is well-defined.
\end{defn}
It can be shown that the $L_{0}$ dendrices in $[N_{d}(\mathcal{P}),hcN_{d}(\hat{\mathcal{E}})]$
correspond to symmetric operad maps $W(\mathcal{P})\to\hat{\mathcal{E}}$
and thus are weak $\mathcal{P}$-algebras. Moreover, the $L_{1}$
dendrices can be seen to correspond to symmetric operad maps $W(\mathcal{P}\otimes[1])\to\hat{\mathcal{E}}$
and thus are weak maps of weak $\mathcal{P}$-algebras. We have thus
recovered an internalization of weak algebras and their weak maps
and can now consider iterated weak algebraic structures completely
analogously to the way this can be done in the context of non-enriched
symmetric operads. We illustrate how this works in two examples below.

\subsection{Application to the study of $A_{\infty}$-spaces and weak $n$-categories}

Recall that an $A_{\infty}$-space is an algebra for the topologically
enriched operad $W(As)$, where $As$ is the non-enriched symmetric
operad that classifies monoids. Let $A=N_{d}(As)$; then, by definition, $[A,hcN_{d}(Top)]$ is the dendroidal set of $A_{\infty}$-spaces
and their weak (multivariable) mappings. In the classical definition
of $A_{\infty}$-spaces it is not at all clear how to define $n$-fold
$A_{\infty}$-spaces. However, we now have a perfectly natural such
definition. 
\begin{defn}
The dendroidal set $nA_{\infty}$ of \emph{$n$-fold $A_{\infty}$-spaces}
is defined recursively as follows. For $n=1$ we set $1A_{\infty}=[A,hcN_{d}(Top)]$
and for $n\ge 1$: $(n+1)A_{\infty}=[A,nA_{\infty}]$.
\end{defn}
Thus, we obtain at once notions of weak multivariable mappings of
$n$-fold $A_{\infty}$-spaces. And, since $dSet$ is closed monoidal,
we can immediately classify $n$-fold $A_{\infty}$-spaces.
\begin{prop}
For any $n\ge1$ the dendroidal set $A^{\otimes n}$ classifies $n$-fold
$A_{\infty}$-spaces. 
\end{prop}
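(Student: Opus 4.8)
The plan is to read the assertion that $A^{\otimes n}$ \emph{classifies} $n$-fold $A_\infty$-spaces as the statement that there is a natural isomorphism of dendroidal sets
\[
nA_\infty \cong [A^{\otimes n},hcN_d(Top)],
\]
since by definition the dendroidal set of weak $X$-algebras in $Top$ is $[X,hcN_d(Top)]$. I would prove this isomorphism by induction on $n$, the sole ingredient being that $(dSet,\otimes,\star)$ is a closed monoidal category.

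First I would record the form of the tensor--hom adjunction that I need. From the formula $[X,Y]_T=dSet(X\otimes\Omega[T],Y)$ together with the defining adjunction $dSet(-\otimes Y,Z)\cong dSet(-,[Y,Z])$ and the associativity and symmetry of $\otimes$, one obtains a natural isomorphism of dendroidal sets
\[
[X\otimes Y,Z]\cong[X,[Y,Z]]
\]
for all dendroidal sets $X$, $Y$, $Z$. This is the only nontrivial-looking step, and it is already subsumed by the closed monoidal structure established earlier.

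For the base case $n=1$ there is nothing to prove: $A^{\otimes 1}=A$, so the claim is precisely the definition $1A_\infty=[A,hcN_d(Top)]$. For the inductive step, assume $nA_\infty\cong[A^{\otimes n},hcN_d(Top)]$. Applying $[A,-]$ to this isomorphism and then the adjunction above gives
\[
(n+1)A_\infty=[A,nA_\infty]\cong[A,[A^{\otimes n},hcN_d(Top)]]\cong[A\otimes A^{\otimes n},hcN_d(Top)]=[A^{\otimes(n+1)},hcN_d(Top)],
\]
which closes the induction.

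Since the argument is entirely formal, I do not expect a genuine obstacle; the two points deserving care are both about what the isomorphism records rather than its existence. First, the displayed maps must be isomorphisms of dendroidal sets, not mere bijections on $L_0$- or $L_1$-dendrices; this is automatic because they descend from the closed monoidal structure, and it is exactly what ensures that the correct notion of (weak, multivariable) morphisms of $n$-fold $A_\infty$-spaces is classified alongside the objects. Second, the identity $A^{\otimes(n+1)}=A\otimes A^{\otimes n}$ and the repeated use of the adjunction rely on the coherence of $\otimes$, so one should invoke the full symmetric monoidal (rather than merely monoidal) structure to justify the reassociations silently performed above.
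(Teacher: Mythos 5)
Your proof is correct and matches the paper's (implicit) argument exactly: the paper states the proposition without a written proof, remarking only that ``since $dSet$ is closed monoidal, we can immediately classify $n$-fold $A_{\infty}$-spaces,'' which is precisely your induction via the tensor--hom adjunction $[A\otimes A^{\otimes n},hcN_{d}(Top)]\cong[A,[A^{\otimes n},hcN_{d}(Top)]]$. Your two cautionary remarks (isomorphism of dendroidal sets rather than of dendrex sets, and reliance on the symmetric monoidal coherence) are exactly the right points to flag and are consistent with the paper's framework.
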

It is at this point not known exactly how $n$-fold $A_{\infty}$-spaces
relate to $n$-fold loop spaces. However, the recent work \cite{Fiedor} of Fiedorowicz and Vogt on interchanging $A_\infty$ and $E_{n}$ structures is a first step towards a full comparison of the dendroidal and classical approaches. 
\begin{rem}
Note that were the dendroidal nerve functor monoidal our definition
of $n$-fold $A_{\infty}$-spaces would stabilize at $n=2$. Indeed, we would then have $A^{\otimes n}=N_{d}(As)^{\otimes n}=N_{d}(As^{\otimes n})=N_{d}(Comm)$. 
\end{rem}
A similar application, but technically slightly more complicated,
is to obtain an iterative definition of weak $n$-categories. First
notice that the fact that categories, as well as symmetric operads,
can be enriched in a symmetric monoidal category $\mathcal{E}$ is
a consequence of the ability to in fact enrich in an arbitrary symmetric
operad. We leave the details of defining what a category (or operad)
enriched in a symmetric operad $\mathcal{E}$ is to the reader and only mention
that this is related to the idea of enriching in an $fc$-multicategory
(see \cite{Leins gen enrich}). We now show how in fact categories
(and operads) can be enriched in a dendroidal set. Recall from Example
\ref{exa:classifying categories over A} that for any set $A$ there
is a symmetric operad $C_{A}$ that classifies categories over $A$. 

Once more, the ability to easily iterate within the category of dendroidal
sets naturally leads to a definition of weak $n$-categories enriched
in a dendroidal set $X$ as follows. 
\begin{defn}
Let $X$ be a dendroidal set. The dendroidal set $[N_{d}(\mathcal{C}_{A}),X]$
is called the dendroidal set of \emph{categories over $A$ enriched
in $X$} and is denoted by $Cat(X)_{A}.$ 
\end{defn}
It can easily be verified that enriching in the dendroidal nerve of
$\hat{\mathcal{E}}$ for $\mathcal{E}$ a symmetric monoidal category
agrees with the notion of enrichment in the usual sense. 

At this point we would like to collate the various dendroidal sets
$Cat(X)_{A}$ into a single dendroidal sets. There is here a technical
difficulty and so as not to interrupt the flow of the presentation
we refer the reader to Section 4.1 of \cite{den set} for the details
of the construction. One then obtains the dendroidal set $Cat(X)$
of categories enriched in $X$. Similarly, using the operad $O_{A}$
classifying symmetric operads over $A$, we can obtain the dendroidal
set $Ope(X)$ of symmetric operads enriched in $X$.
\begin{defn}
Let $X$ be a dendroidal set. Let $_{0}Cat(X)=X$ and define recursively
$_{n+1}Cat(X)=Cat(_{n}Cat(X))$ for each $n\ge1$. We call $_{n}Cat(X)$
the dendroidal set of \emph{$n$-categories enriched in $X$. }
\end{defn}
In particular, considering the category $Cat$ with its categorical
model structure and taking $X=hcN_{d}(Cat)$ we obtain for each $n\ge0$
the dendroidal set $_{n}Cat={}_{n}Cat(X)$, which we call the dendroidal
set of weak $n$-categories. In \cite{Lukacs thesis} the dendrices
in $_{2}Cat(X)_{\eta}$ and $_{3}Cat(X)_{\eta}$ are compared with
other definitions of weak $2$-categories and weak $3$-categories
to show that the notions are in fact equivalent. The complexity of
such comparisons increases rapidly with $n$ and is currently, as
is the case with many definitions of weak $n$-categories (see \cite{Leinster survey}
for a survey of such), not settled. We mention that we can also consider
categories weakly enriched in other dendroidal sets such as $hcN_{d}(Top)$
or $hcN_{d}(sSet)$, where again a full comparison with existing structures
is yet to be completed. Of course, we can also consider weak $n$-operads
of various sorts as well. 

We conclude this section by considering the Baez-Breen-Dolan stabilization
hypothesis for weak $n$-categories as defined above. With every reasonable
definition of weak $n$-categories there is usually associated a notion
of $k$-monoidal $n$-categories for every $k\ge0$. These are weak
$(n+k)$-categories having trivial information in all dimensions up
to and including $k$. The stabilization hypothesis is that for fixed
$n$ the complexity of these structures stabilizes at $k=n+2$. Given
a concrete definition of weak $n$-categories this hypothesis can
be made exact and it becomes a conjecture. In our case we proceed
as follows. 
\begin{defn}
Let $n\ge0$ be fixed. For $k\ge0$ we define recursively the dendroidal
set $wCat_{k}^{n}$ of \emph{weak $k$-monoidal $n$-categories} as
follows. For $k=0$ we set $wCat_{0}^{n}=\ _{n}Cat$ and for $k>0$
we define $wCat_{k}^{n}=[A,wCat_{k-1}^{n}]$, where $A=N_{d}(As)$.
A dendrex of shape $\eta$ in $wCat_{k}^{n}$ is called a \emph{$k$-monoidal
$n$-category}.\end{defn}
\begin{conjecture}
\label{con:(The-Baez-Breen-Dolan-stabilization}(The Baez-Breen-Dolan
stabilization hypothesis for our notion of $n$-categories) For a
fixed $n\ge0$, there is an isomorphism of dendroidal sets between
$wCat_{k}^{n}$ and $wCat_{n+2}^{n}$ for any $k\ge n+2$. 
\end{conjecture}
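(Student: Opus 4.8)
The plan is to trade the recursive definition for a single closed-hom description and then isolate the stabilization as a truncation phenomenon. First I would use that $(dSet,\otimes,\star)$ is closed monoidal: the adjunction isomorphism $[X,[Y,Z]]\cong[X\otimes Y,Z]$, applied iteratively to $wCat_{k}^{n}=[A,wCat_{k-1}^{n}]$ with $A=N_{d}(As)$, yields a natural isomorphism $wCat_{k}^{n}\cong[A^{\otimes k},{}_{n}Cat]$. Under this identification the structural comparison maps are induced by the unit inclusion $\star=N_{d}(\star)\to N_{d}(As)=A$: tensoring in the last slot gives $j_{k}\colon A^{\otimes k}\cong A^{\otimes k}\otimes\star\hookrightarrow A^{\otimes k}\otimes A=A^{\otimes(k+1)}$, and precomposition gives $j_{k}^{*}\colon wCat_{k+1}^{n}\to wCat_{k}^{n}$. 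The conjecture is then exactly the assertion that $j_{k}^{*}$ is an isomorphism for all $k\ge n+2$.

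Second, I would develop the two ingredients that the Eckmann--Hilton heuristic demands, namely a notion of \emph{$n$-truncatedness} for dendroidal sets and a matching notion of \emph{connectivity} for maps. For truncation I would pull back the simplicial theory along $i^{*}\colon dSet\to sSet$ (recall that a dendrex-level equivalence in a dendroidal set is already detected by $i^{*}$), and show that the iterated enrichment building ${}_{n}Cat$ raises the truncation level by one at each step: since $Cat$ with its categorical model structure is $1$-truncated, the recursion ${}_{m+1}Cat(X)=Cat({}_{m}Cat(X))$ forces ${}_{n}Cat$ to be $n$-truncated. The orthogonality I want is the formal statement that an $(n{+}1)$-connected map induces an isomorphism on $[-,Y]$ whenever $Y$ is $n$-truncated.

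Third comes the genuinely operadic input: a \emph{connectivity estimate} for the stabilization maps. At the level of operads the maps $As^{\otimes k}\to As^{\otimes(k+1)}$ are isomorphisms for $k\ge 2$ (the Eckmann--Hilton identity $As\otimes As\cong Comm$ together with $Comm\otimes As\cong Comm$), and applying $\tau_{d}$---which is strong monoidal and satisfies $\tau_{d}N_{d}\cong id$, so that $\tau_{d}(A^{\otimes k})\cong As^{\otimes k}$ (Proposition~\ref{pro:tau of tensor})---shows the stabilization has already occurred on fundamental operads. The content is that on the dendroidal level the failure of $N_{d}$ to be monoidal pushes the entire discrepancy between $A^{\otimes k}$ and $A^{\otimes(k+1)}$ into higher dimension: I would prove that $j_{k}$ is $(k{-}1)$-connected, the dendroidal shadow of the classical fact that the inclusion $E_{k}\hookrightarrow E_{k+1}$ of little-cubes operads is highly connected. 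Feeding $k-1\ge n+1$, i.e.\ $k\ge n+2$, into the orthogonality statement then yields that $j_{k}^{*}$ is an isomorphism, which is the conjecture.

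I expect the third step to be the decisive obstacle, and it is why the statement is posed as a conjecture rather than a theorem. Controlling $A^{\otimes k}=N_{d}(As)^{\otimes k}$ is hard precisely because $N_{d}$ is not monoidal, so $A^{\otimes k}$ is a genuine colimit of nerves of trees rather than $N_{d}(Comm)$; a careful analysis of its low-dimensional dendrices---equivalently of the $n$-skeleton $sk_{n}A^{\otimes k}$, which is all that an $n$-truncated target can detect---is needed to pin the connectivity at $k-1$ and to match the truncation constants exactly so that the threshold is $n+2$ and not some shift of it. This is the dendroidal incarnation of Baez--Breen--Dolan stabilization and carries the same difficulty as establishing stabilization for iterated loop spaces; a complete argument would presumably require the Cisinski--Moerdijk model structure together with a Postnikov/skeletal tower showing that $sk_{n}A^{\otimes k}\cong sk_{n}A^{\otimes(k+1)}$ for $k\ge n+2$.
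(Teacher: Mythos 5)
The statement you are addressing is a \emph{conjecture}: the paper contains no proof of it, and your proposal does not close it either, as you yourself concede. Your first step is sound and is exactly the common starting point: closedness of $(dSet,\otimes,\star)$ gives $wCat_{k}^{n}\cong[A^{\otimes k},{}_{n}Cat]$, and the unit map $\star=N_{d}(\star)\to A$ induces the comparison maps. (One caveat: the conjecture only asserts the \emph{existence} of an isomorphism $wCat_{k}^{n}\cong wCat_{n+2}^{n}$; your reformulation, that the specific map $j_{k}^{*}$ is an isomorphism, is a strictly stronger statement.) The genuine gap is everything after that. Neither a notion of $n$-truncatedness for dendroidal sets, nor a connectivity theory for maps of dendroidal sets, nor the orthogonality statement pairing them exists in the paper, and none of your three key claims --- that ${}_{n}Cat$ is $n$-truncated, that the recursion $Cat(-)$ raises truncation by exactly one, and that $j_{k}\colon A^{\otimes k}\to A^{\otimes(k+1)}$ is $(k{-}1)$-connected --- is proved or even reduced to a checkable statement. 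These are not finishing touches; they constitute the entire mathematical content of the conjecture, so what you have is a research program, not a proof.

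It is worth contrasting your program with the paper's own partial progress, which is a \emph{conditional} result (the Proposition in the subsection ``Revisiting applications''): if $wCat_{n}^{n}$ is a strict $\infty$-operad, then the conjecture holds. After the same first step as yours, that argument proceeds purely algebraically rather than homotopically: strictness gives $wCat_{n}^{n}\cong N_{d}(\mathcal{P})$ for some operad $\mathcal{P}$ by Lemma \ref{lem:strict iff operad}; the adjunction $\tau_{d}\dashv N_{d}$ then yields $[A^{\otimes j},N_{d}(\mathcal{P})]\cong N_{d}([\tau_{d}(A^{\otimes j}),\mathcal{P}])$; and Proposition \ref{pro:tau of tensor} together with the Eckmann--Hilton collapse $As^{\otimes j}\cong Comm$ for $j\ge2$ kills the dependence on $j$, giving stabilization at exactly the threshold $n+2$. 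No connectivity or truncation theory is needed --- the whole difficulty is repackaged into the unverified strictness hypothesis. Your route, if it could be completed, would be closer in spirit to the classical Baez--Breen--Dolan heuristic (connectivity of $E_{k}\hookrightarrow E_{k+1}$ against truncation of the target) and would not require strictness; but as things stand, both routes leave the conjecture open, and yours additionally requires building foundational theory (dendroidal truncation and connectivity) that the paper does not provide.
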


\section{Dendroidal sets - models for $\infty$-operads}

In this section we show how dendroidal sets are used to model $\infty$-operads.
As very brief motivation for the concepts to follow we first discuss
$\infty$-categories, then we present that part of the theory of dendroidal
sets needed to define the Cisinski-Moerdijk model structure on dendroidal
sets which establishes dendroidal sets as models for homotopy operads
and illustrate some of its consequences. The proofs of the results
below can be found in \cite{dSet model hom op,inn Kan in dSet}.

\subsection{$\infty$-categories briefly}

We have seen above that in general, weak $\mathcal{P}$-algebras in
a homotopy environment $\mathcal{E}$ and their weak maps fail to
form a category and that in fact one is immediately led to define
the simplicial set of weak algebras $wAlg[\mathcal{P},\mathcal{E}]$.
The failure of this simplicial set to be the nerve of a category is
a reflection of composition not being associative. However, the composition
of weak maps is associative up to coherent homotopies, a fact which
induces some extra structure on the simplicial set $wAlg[\mathcal{P},\mathcal{E}]$.
Boardman and Vogt in \cite{BV book} formulated this extra structure by means of a condition called the restricted Kan condition. To define it recall that
a horn in a simplicial set $X$ is a mapping $\Lambda^{k}[n]\to X$, where $\Lambda^{k}[n]$ is the union in $\Delta[n]$ of all faces
except the one opposite the vertex $k$. A horn is called \emph{inner
}when $0<k<n$. Boardman and Vogt in \cite{BV book}, page 102, define
\begin{quote}
A simplicial set $X$ is said to satisfy the restricted Kan condition
if every inner horn $\Lambda^{k}[n]\to X$ has a filler.
\end{quote}
and so $\infty$-categories were born. They consequently prove that
in the context of topological operads the simplicial set of weak algebras
satisfies the restricted Kan condition. Simplicial sets satisfying
the restricted Kan condition are extensively studied by Joyal (in
\cite{quasi cat,quasi cat book} under the name 'quasicategories')
and by Lurie (in e.g., \cite{Lurie} under the name '$(\infty,1)$-categories'
or more simply '$\infty$-categories').

There are several ways to model $\infty$-categories, of which the
above restricted Kan condition is one. Three other models are complete
Segal spaces, Segal categories, and simplicial categories. 
For each of these models there is an appropriate Quillen model structure
rendering the four different models Quillen equivalent (see \cite{surv infty cat}
for a detailed survey). By considering dendroidal sets instead of
simplicial sets Cisinski and Moerdijk in \cite{den Seg sp,dSet and simp ope}
introduce the analogous dendroidal notions: complete dendroidal Segal
spaces, Segal operads, and simplicial operads. Moreover, they establish
Quillen model structures for each of these notions, proving they are
all Quillen equivalent to a Quillen model structure on dendroidal
sets they establish in \cite{dSet model hom op}. All of these model
structures and equivalences, upon slicing over a suitable object,
reduce to the equivalence of the simplicial based structures mentioned
above. 

There is yet another approach to $\infty$-operads, taken by Lurie
\cite{high top th}, which defines an $\infty$-operad to be a simplicial
set with extra structure. In Lurie's approach the highly developed
theory of simplicial sets and quasicategories is readily available
to provide a rich theory of $\infty$-operads. However, the extra
structure that makes a simplicial set into an $\infty$-operad is
quite complicated, rendering working with explicit examples of $\infty$-operads
difficult. The approach via dendroidal sets replaces the relative
simplicity of the combinatorics of linear trees by the complexity
of the combinatorics of trees which renders existing simplicial theory
unusable but offers very many explicit examples of $\infty$-operads.
We believe that this trade-off in complexity will result in these
two approaches mutually enriching each other as a future comparisons
unfold. 

Below, following \cite{dSet model hom op} we give a short presentation
of the approach to $\infty$-operads embodied in the Cisinski-Moerdijk
model structure on $dSet$ that slices to the Joyal model structure
on $sSet$ and use this model structure to prove a homotopy invariance
property for algebras in $dSet$. From this point on $\infty$-category
means a quasicategory.

\subsection{Horns in $dSet$}

We first introduce some concepts needed for the definition referring
the reader to \cite{den set,inn Kan in dSet} for more details. 
\begin{defn}
Let $T$ be a tree and $\alpha:S\rightarrow T$ a face map in $\Omega$.
The \emph{$\alpha$-face} of $\Omega[T]$, denoted by $\partial_{\alpha}\Omega[T]$,
is the dendroidal subset of $\Omega[T]$ which is the image of the
map $\Omega[\alpha]:\Omega[S]\rightarrow\Omega[T]$. Thus we have
that \[
\partial_{\alpha}\Omega[T]_{R}=\{\xymatrix{R\ar[r] & S\ar[r]^{\alpha} & T}
\mid R\rightarrow S\in\Omega[S]_{R}\}.\]
When $\alpha$ is obtained by contracting an inner edge $e$ in $T$
we denote $\partial_{\alpha}$ by $\partial_{e}$. 

Let $T$ be a tree. The \emph{boundary} of $\Omega[T]$ is the dendroidal
subset $\partial\Omega[T]$ of $\Omega[T]$ obtained as the union
of all the faces of $\Omega[T]$:

\[
\partial\Omega[T]=\bigcup_{\alpha\in\Phi_{1}(T)}\partial_{\alpha}\Omega[T].\]
where $\Phi_{1}(T)$, is the set of all faces of $T$. 
\end{defn}

\begin{defn}
Let $T$ be a tree and $\alpha\in\Phi_{1}(T)$ a face of $T$. The
\emph{$\alpha$-horn} in $\Omega[T]$ is the dendroidal subset $\Lambda^{\alpha}[T]$
of $\Omega[T]$ which is the union of all the faces of $T$ except
$\partial_{\alpha}\Omega[T]$:\[
\Lambda^{\alpha}[T]=\bigcup_{\beta\ne\alpha\in\Phi_{1}(T)}\partial_{\beta}\Omega[T].\]

The horn is called an \emph{inner horn} if $\alpha$ is an inner face,
otherwise it is called an \emph{outer horn}. We will denote an inner
horn $\Lambda^{\alpha}[T]$ by $\Lambda^{e}[T]$, where $e$ is the
contracted inner edge in $T$ that defines the inner face $\alpha=\partial_{e}:\Omega[T/e]\rightarrow\Omega[T]$.
A horn in a dendroidal set $X$ is a map of dendroidal sets $\Lambda^{\alpha}[T]\rightarrow X$.
It is inner (respectively outer) if the horn $\Lambda^{\alpha}[T]$
is inner (respectively outer). \end{defn}
\begin{rem}
It is trivial to verify that these notions for dendroidal sets extend
the common ones for simplicial sets in the sense, for example, that
for the simplicial horn $\Lambda^{k}[n]\subseteq\Delta[n]$, the dendroidal
set \[
i_{!}(\Lambda^{k}[n])\subseteq i_{!}(\Delta[n])=\Omega[L_{n}]\]
is a horn in the dendroidal sense. Furthermore, the horn $\Lambda^{k}[n]$
is inner (i.e., $0<k<n$) if, and only if, the horn $i_{!}(\Lambda^{k}[n])$
is inner. 
\end{rem}
Both the boundary $\partial\Omega[T]$ and the horns $\Lambda^{\alpha}[T]$
in $\Omega[T]$ can be described as colimits as follows. 
\begin{defn}
Let $T_{1}\rightarrow T_{2}\rightarrow\cdots\rightarrow T_{n}$ be
a sequence of $n$ face maps in $\Omega$. We call the composition
of these maps a \emph{subface} of $T_{n}$ of \emph{codimension}
$n$. \end{defn}
\begin{prop}
\label{pro:Sub-faceOfASub-Face}Let $S\rightarrow T$ be a subface
of $T$ of codimension $2$. The map $S\rightarrow T$ decomposes
in precisely two different ways as a composition of faces. 
\end{prop}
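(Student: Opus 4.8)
The plan is to recast the statement in terms of the \emph{face poset} of $T$ and then settle it by a combinatorial case analysis, exactly paralleling how the simplicial identities $d_i d_j = d_{j-1}d_i$ ($i<j$) witness that each codimension-$2$ face of $\Delta[n]$ arises in precisely two ways. First I would recall that every codimension-$1$ face map into $T$ is of one of two kinds: an \emph{inner face} $\partial_e$ contracting an inner edge $e$, or an \emph{outer face} $\partial_v$ deleting an outer vertex $v$ together with its adjacent outer edges and retaining its unique inner edge (with the degenerate corolla case retaining a single chosen edge, giving the faces $\eta\to C_n$, which one must also count in $\Phi_1$). The crucial reformulation is that a decomposition of a fixed codimension-$2$ subface $\gamma\colon S\to T$ as a composite $S\to T'\to T$ of two faces is the same datum as a choice of an intermediate tree $T'$ with $S\subseteq T'\subseteq T$, since once the subobject $T'$ is fixed both $T'\to T$ and $S\to T'$ are the corresponding canonical faces and the factorization through $T'$ is unique. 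Thus the proposition becomes: there are exactly two codimension-$1$ faces of $T$ lying strictly between $S$ and $T$. Since the image of $\gamma$ in $\Omega[T]$ records precisely which edges and vertices of $T$ survive in $S$, a candidate intermediate is a codimension-$1$ face whose surviving edges and vertices contain those of $S$, and the entire problem is to count these.

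Next I would dispose of the generic, non-interacting configurations, where some decomposition $S\to T'\to T$ removes two pieces of data that are disjoint and non-adjacent: two distinct inner edges $e,e'$; an inner edge $e$ and an outer vertex $v$ not incident to $e$; or two outer vertices $v,w$ that are not adjacent along an inner edge. In each of these cases the two elementary operations commute, because performing one leaves the other an inner edge (respectively a removable outer vertex) of the intermediate tree; this produces two intermediates, visibly distinct since, e.g., $T/e\neq T/e'$, and a short check that any intermediate must survive exactly one of the two operations shows there are no others. This is the direct analogue of the simplicial relation and accounts for the ``easy'' two decompositions.

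The main obstacle is the interacting configurations, where two features of the dendroidal setting appear that have no simplicial counterpart. The first is that the two decompositions of $\gamma$ need \emph{not} use the same pair of face types: for example, deleting the two vertices of a two-vertex tree in one order gives an $(\text{outer},\text{outer})$ decomposition of a map $\eta\to T$, while first contracting the connecting inner edge $e$ and then selecting the corresponding edge of the resulting corolla gives an $(\text{inner},\text{corolla-edge})$ decomposition of the \emph{same} map; so the pair of elementary operations is not an invariant of $\gamma$, only the subobject is. The second, and genuinely delicate, point is \emph{completeness}: I must show that in every adjacent, nested, or corolla configuration the count of intermediates is exactly two, never three. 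I would argue this directly from the bookkeeping of surviving edges and vertices, treating each incidence pattern (adjacent outer vertices, an outer vertex whose single inner edge is the one contracted, and the corolla reductions) in turn, and verifying in each case both that a naive illegal order is always compensated by a face of a different type that restores the second decomposition, and that no third admissible ``first move'' on $T$ exists. Assembling the generic cases with this finite enumeration of interacting cases yields the result; the argument is elementary but requires care precisely at the places where an outer deletion can consume an edge on which another face would otherwise act.
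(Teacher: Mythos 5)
The paper itself does not prove this proposition: it is stated and deferred to the cited sources \cite{den set,inn Kan in dSet}, and the proof there is exactly the combinatorial case analysis you outline, so your route is essentially the standard one. Your reduction of the count to intermediate codimension-$1$ subtrees is legitimate (faces are monomorphisms determined by their images, and the unique factorization theorem quoted in the paper forces any codimension-$1$ factor of a subface to be again a face, with no automorphism ambiguity), and you correctly isolate precisely the phenomena that have no simplicial counterpart: the corolla faces $\eta\to C_{n}$, the two-vertex trees where the two decompositions of the same map use different pairs of face types, and the incidences between an inner edge and an outer vertex attached to it. The one reservation is that the interacting cases, which carry essentially all the content, are announced rather than verified; when carrying them out, note also that an outer face at a nullary vertex deletes a vertex but no edges at all, so the bookkeeping must be done with the image suboperad of operations, as your subobject formulation implicitly does, and not with edge sets alone.
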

Let $\Phi_{i}(T)$ be the set of all subfaces of $T$ of codimension
$i$. The proposition implies that for each $\beta:S\rightarrow T\in\Phi_{2}(T)$
there are precisely two face maps $\beta_{1}:S\rightarrow T_{1}$
and $\beta_{2}:S\rightarrow T_{2}$ that factor $\beta$ as a composition
of face maps. Using these maps we can form two maps $\gamma_{1}$
and $\gamma_{2}$\[
\coprod_{S\rightarrow T\in\Phi_{2}(T)}\Omega[S]\rightrightarrows\coprod_{R\rightarrow T\in\Phi_{1}(T)}\Omega[R]\]
where $\gamma_{i}$ ($i=1,2$) has component$\xymatrix{\Omega[S]\ar[r]^{\Omega[\beta_{i}]} & \Omega[T_{i}]\ar[r] & \coprod\Omega[R]}
$ for each $\beta:S\rightarrow T\in\Phi_{2}(T)$. 
\begin{lem}
Let $T$ be a tree in $\Omega$. With notation as above we have that
the boundary $\partial\Omega[T]$ is a coequalizer\[
\coprod_{S\rightarrow T\in\Phi_{2}(T)}\Omega[S]\rightrightarrows\coprod_{R\rightarrow T\in\Phi_{1}(T)}\Omega[R]\rightarrow\partial\Omega[T]\]
of the two maps $\gamma_{1},\gamma_{2}$ constructed above. \end{lem}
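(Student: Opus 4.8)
The plan is to identify $\partial\Omega[T]$ with the described coequalizer by recognizing it as a union of subobjects of $\Omega[T]$ and then computing that union via a standard colimit presentation. First I would recall that $\partial\Omega[T] = \bigcup_{\alpha\in\Phi_1(T)}\partial_\alpha\Omega[T]$ is by definition the union inside $\Omega[T]$ of all the codimension-one faces, and that each face $\partial_\alpha\Omega[T]$ is the image of $\Omega[\alpha]\colon\Omega[R]\to\Omega[T]$ for $\alpha\colon R\to T$ in $\Phi_1(T)$. A union of subobjects in a presheaf category (which $dSet$ is) is the image of the coproduct map $\coprod_{R\to T\in\Phi_1(T)}\Omega[R]\to\Omega[T]$, equivalently the colimit of the evident diagram of the $\Omega[R]$'s and their pairwise intersections. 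So the content of the lemma is that this union is exactly the coequalizer of $\gamma_1,\gamma_2$, i.e.\ that the only relations one needs to impose among the faces are those coming from the codimension-two subfaces.

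The key computational step would be to verify that the pairwise intersections $\partial_{\alpha}\Omega[T]\cap\partial_{\alpha'}\Omega[T]$, for distinct faces $\alpha,\alpha'\in\Phi_1(T)$, are precisely accounted for by the codimension-two subfaces. Here I would invoke Proposition~\ref{pro:Sub-faceOfASub-Face}: each $\beta\colon S\to T$ in $\Phi_2(T)$ factors in \emph{exactly} two ways as a composite of face maps, say through $T_1$ and $T_2$, and conversely every codimension-two subface arising as an intersection of two distinct codimension-one faces is of this form. This gives a bijection between codimension-two subfaces and the ``overlaps'' among the codimension-one faces, and the two factorizations are exactly the two legs $\beta_1,\beta_2$ defining $\gamma_1,\gamma_2$. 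Representability via the Yoneda lemma lets me check everything dendrex-wise: a dendrex of $\partial\Omega[T]$ is a map $R\to T$ in $\Omega$ factoring through \emph{some} face, and two such factorizations agree in the colimit precisely when they are related through a common subface.

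Concretely I would argue that the coequalizer computes, for each shape $R$, the set of maps $R\to T$ that factor through at least one codimension-one face, with two representatives in the coproduct $\coprod_{\Phi_1(T)}\Omega[R]$ identified exactly when they differ by passage through a codimension-two subface; Proposition~\ref{pro:Sub-faceOfASub-Face} guarantees these identifications glue consistently (no further coherence is needed because a codimension-two subface has exactly two, not more, decompositions). This matches, dendrex for dendrex, the description of $\partial_\alpha\Omega[T]_R$ given in the definition, so the coequalizer and $\partial\Omega[T]$ have the same $R$-dendrices compatibly in $R$, hence are isomorphic as dendroidal sets.

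The main obstacle I expect is the bookkeeping in the intersection step: one must check that \emph{every} intersection of two codimension-one faces is captured by a codimension-two subface and that no triple (or higher) overlaps introduce relations not already implied by the pairwise ones. This is where the ``exactly two factorizations'' clause of Proposition~\ref{pro:Sub-faceOfASub-Face} does the real work—it is the analogue of the simplicial identity $\partial_i\partial_j=\partial_{j-1}\partial_i$ that makes the boundary of a simplex a coequalizer—so the crux is to reduce the general tree combinatorics to this clean codimension-two statement and thereby avoid any higher coherence analysis.
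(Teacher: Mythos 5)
Your overall reduction---present $\partial\Omega[T]$ as the union of the images of the codimension-one faces and compare it, dendrex-wise via Yoneda, with the coequalizer---is the right frame, and since the paper states this lemma without proof (deferring to the cited Moerdijk--Weiss papers) the only question is whether your argument closes. It does not: the step you yourself call the crux is not only unproven but is false as you state it. Proposition \ref{pro:Sub-faceOfASub-Face} is a \emph{uniqueness} statement (a given codimension-two subface decomposes in exactly two ways); what your argument needs is an \emph{existence} (covering) statement: every dendrex lying in the images of two distinct codimension-one faces factors through a codimension-two subface common to both. That statement fails for trees. Take $T$ with root vertex $v$ whose two inputs are inner edges $e_{1},e_{2}$, each carrying a unary vertex $w_{i}$ with leaf $\ell_{i}$. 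Its faces are the inner faces $\partial_{e_{1}},\partial_{e_{2}}$ and the outer faces $\partial_{w_{1}},\partial_{w_{2}}$ (there is no root face, since $v$ has two inner edges attached). The root dendrex $\eta_{r}\to T$ factors through both $\partial_{e_{1}}$ and $\partial_{w_{1}}$, but the only codimension-two subface admitting decompositions through both $T/e_{1}$ and $T/w_{1}$ is the linear subtree on $\{e_{2},\ell_{2}\}$, whose image does not contain $r$. So pairwise intersections are \emph{not} ``accounted for'' by common codimension-two subfaces, and your dismissal of further coherence (``no further coherence is needed'') is exactly backwards: this is where simplicial intuition, where $\mathrm{im}(d^{i})\cap\mathrm{im}(d^{j})$ is always a single codimension-two face, breaks down dendroidally.

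The lemma is nevertheless true, because the coequalizer imposes the equivalence relation \emph{generated} by the codimension-two identifications, and chains through other faces supply the missing direct identifications: in the example, $(\partial_{e_{1}},\eta_{r})\sim(\partial_{w_{2}},\eta_{r})$ via the codimension-two subface given by the $2$-corolla with leaves $\ell_{1},e_{2}$ and root $r$, and then $(\partial_{w_{2}},\eta_{r})\sim(\partial_{w_{1}},\eta_{r})$ via the $2$-corolla with leaves $e_{1},e_{2}$ and root $r$. What a correct proof must therefore establish is a connectivity statement: for each dendrex $x$ in the boundary, the graph whose vertices are the faces through which $x$ factors, with an edge whenever two such faces admit a common codimension-two subface through which $x$ factors, is connected. (Injectivity of the comparison map also uses that face maps are monomorphisms in $\Omega$, which you should make explicit.) Proving this connectivity is the genuinely dendroidal combinatorial content---a case analysis over inner and outer faces, including root faces and the corolla edge inclusions, organized by the support of $x$---and it is precisely the part your proposal replaces by an appeal to a proposition that does not supply it.
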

\begin{cor}
A map of dendroidal sets $\partial\Omega[T]\rightarrow X$ corresponds
exactly to a sequence $\{x_{R}\}_{R\rightarrow T\in\Phi_{1}(T)}$
of dendrices whose faces match, in the sense that for each subface
$\beta:S\rightarrow T$ of codimension $2$ we have $\beta_{1}^{*}(x_{T_{1}})=\beta_{2}^{*}(x_{T_{2}})$.
\end{cor}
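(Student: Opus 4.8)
The plan is to read the statement directly off the coequalizer presentation of $\partial\Omega[T]$ supplied by the previous lemma, using nothing beyond the universal property of a coequalizer and the Yoneda lemma; no genuine combinatorics of trees is required.

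First I would apply the universal property of the coequalizer. A map $\partial\Omega[T]\to X$ is exactly a map $g\colon\coprod_{R\to T\in\Phi_1(T)}\Omega[R]\to X$ that coequalizes the parallel pair, i.e.\ satisfies $g\gamma_1=g\gamma_2$. Since the source is a coproduct, giving $g$ is the same as giving a family $\{\Omega[R]\to X\}_{R\to T\in\Phi_1(T)}$ of maps, one for each codimension-one face of $T$. By the Yoneda lemma each map $\Omega[R]\to X$ is the same datum as a dendrex $x_R\in X_R$, so a map $\partial\Omega[T]\to X$ produces precisely a sequence $\{x_R\}_{R\to T\in\Phi_1(T)}$, and conversely.

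Next I would translate the condition $g\gamma_1=g\gamma_2$ into the matching condition of the statement. By Proposition \ref{pro:Sub-faceOfASub-Face}, every codimension-two subface $\beta\colon S\to T$ factors in exactly two ways through codimension-one faces, yielding face maps $\beta_1\colon S\to T_1$ and $\beta_2\colon S\to T_2$, and these are precisely the factorizations whose images are the components of $\gamma_1$ and $\gamma_2$. The $S$-indexed component of $g\gamma_i$ is the composite $\Omega[S]\xrightarrow{\Omega[\beta_i]}\Omega[T_i]\xrightarrow{x_{T_i}}X$, which under Yoneda corresponds to the pullback dendrex $\beta_i^*(x_{T_i})\in X_S$. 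Hence $g$ coequalizes $\gamma_1,\gamma_2$ if and only if $\beta_1^*(x_{T_1})=\beta_2^*(x_{T_2})$ for every $\beta\in\Phi_2(T)$, which is exactly the asserted compatibility.

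The argument is therefore entirely formal, and the only point demanding care is the bookkeeping in the second step: one must verify that the two factorizations furnished by Proposition \ref{pro:Sub-faceOfASub-Face} are matched consistently with the two legs $\gamma_1,\gamma_2$, so that the indices $T_1,T_2$ and the operations $\beta_1^*,\beta_2^*$ correspond correctly. I expect this indexing to be the main (though minor) obstacle; once it is pinned down, the equivalence follows immediately from the coequalizer universal property together with the Yoneda lemma.
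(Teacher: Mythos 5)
Your proposal is correct and is precisely the argument the paper intends: the corollary is stated without proof as an immediate consequence of the coequalizer lemma, obtained exactly as you describe by applying $dSet(-,X)$ to the coequalizer presentation, decomposing the coproduct, and invoking Yoneda together with Proposition \ref{pro:Sub-faceOfASub-Face} to identify the coequalizing condition with the face-matching condition. Your closing remark about matching the two factorizations with the legs $\gamma_1,\gamma_2$ is the right point of care, and it is settled by the paper's explicit construction of the components of $\gamma_i$ as $\Omega[S]\xrightarrow{\Omega[\beta_i]}\Omega[T_i]\to\coprod\Omega[R]$.
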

A similar presentation for horns holds as well. For a fixed face $\alpha:S\rightarrow T\in\Phi_{1}(T)$
consider the parallel arrows defined by making the following diagram
commute\[
\xymatrix{\Omega[S]\ar[d]\ar[r]^{\beta_{1}} & \Omega[T_{1}]\ar[d]\\
\coprod_{\beta:S\rightarrow T\in\Phi_{2}(T)}\Omega[S]\ar@<2pt>[r]\ar@<-2pt>[r] & \coprod_{R\rightarrow T\ne\alpha\in\Phi_{1}(T)}\Omega[R]\\
\Omega[S]\ar[u]\ar[r]^{\beta_{2}} & \Omega[T_{2}]\ar[u]}
\]
where the vertical arrows are the canonical injections into the coproduct
and where we use the same notation as above. 
\begin{lem}
Let $T$ be a tree in $\Omega$ and $\alpha$ a face of $T$. In the
diagram \[
\coprod_{S\rightarrow T\in\Phi_{2}(T)}\Omega[S]\rightrightarrows\coprod_{R\rightarrow T\ne\alpha\in\Phi_{1}(T)}\Omega[R]\rightarrow\Lambda^{\alpha}[T]\]
 the dendroidal set $\Lambda^{\alpha}[T]$ is the coequalizers of
the two maps constructed above. \end{lem}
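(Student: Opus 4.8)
The plan is to prove this exactly as one proves the preceding boundary lemma, the only new ingredient being the removal of the single face $\alpha$ from the indexing. Since colimits of presheaves are computed objectwise in $Set$, it suffices to check, for each tree $U\in\Omega_{0}$, that the set $\Lambda^{\alpha}[T]_{U}$ is the coequalizer in $Set$ of the two evident maps
\[
\coprod_{S\rightarrow T\in\Phi_{2}(T)}\Omega[S]_{U}\rightrightarrows\coprod_{R\rightarrow T\ne\alpha\in\Phi_{1}(T)}\Omega[R]_{U}.
\]
Here I would invoke the elementary fact that a union $\bigcup_{i}V_{i}$ of subsets of a fixed set is the coequalizer of $\coprod_{i,j}(V_{i}\cap V_{j})\rightrightarrows\coprod_{i}V_{i}$, where the two maps are induced by the inclusions $V_{i}\cap V_{j}\hookrightarrow V_{i}$ and $V_{i}\cap V_{j}\hookrightarrow V_{j}$; the coequalizer glues each element of an overlap to itself and so recovers precisely the union inside the ambient set $\Omega(U,T)$.

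Next I would identify the pieces and the overlaps. Since every face map is a monomorphism and the Yoneda embedding is fully faithful, each $\Omega[\beta]\colon\Omega[T_{\beta}]\to\Omega[T]$ is a mono, so its image $\partial_{\beta}\Omega[T]$ is isomorphic to $\Omega[T_{\beta}]$; at the level of $U$ this identifies $\partial_{\beta}\Omega[T]_{U}$ with the set of maps $U\to T$ that factor through $\beta$, matching the summand $\Omega[R]_{U}$ for $R=T_{\beta}$. For two distinct faces $\beta,\beta'$, the intersection $\partial_{\beta}\Omega[T]\cap\partial_{\beta'}\Omega[T]$ consists of those maps into $T$ that factor through both, hence through a common codimension-$2$ subface. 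By Proposition \ref{pro:Sub-faceOfASub-Face} each codimension-$2$ subface $S\to T$ admits exactly two factorizations $\beta_{1}\colon S\to T_{1}$ and $\beta_{2}\colon S\to T_{2}$ into faces, so the summand $\Omega[S]$ with its two maps $\gamma_{1},\gamma_{2}$ records exactly the overlap of the faces $\partial_{T_{1}}\Omega[T]$ and $\partial_{T_{2}}\Omega[T]$. Thus the displayed diagram is precisely the union-of-subsets diagram specialized to the family $\{\partial_{\beta}\Omega[T]\}_{\beta\ne\alpha}$.

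The one point that requires care — and the main obstacle relative to the boundary lemma — is the treatment of those codimension-$2$ subfaces $S\to T$ one of whose two factorizations is $\alpha$ itself. For such an $S$ the corresponding leg of $\gamma_{1}$ or $\gamma_{2}$ would land in the summand $\Omega[T_{\alpha}]$, which has been deleted from the right-hand coproduct; geometrically this summand records an overlap with the removed face $\partial_{\alpha}\Omega[T]$, which is not part of the union defining $\Lambda^{\alpha}[T]$. Consequently these subfaces must be omitted, so that the left-hand coproduct ranges in effect over those $S\in\Phi_{2}(T)$ that do not factor through $\alpha$, equivalently over the unordered pairs of distinct faces both different from $\alpha$. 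Once this bookkeeping is made explicit, the remaining faces and their pairwise overlaps satisfy the hypotheses of the elementary fact from the first paragraph, and the coequalizer computes $\bigcup_{\beta\ne\alpha}\partial_{\beta}\Omega[T]=\Lambda^{\alpha}[T]$, as required; every other step is identical to the proof of the boundary lemma, now run over the reduced index set.
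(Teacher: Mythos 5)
The paper states this lemma without proof (it only constructs the two parallel arrows and defers all proofs to the cited references), so your argument must stand on its own; its overall architecture --- objectwise reduction, the union-of-subobjects coequalizer fact, matching summands to faces and apexes to overlaps --- is indeed the intended one, and you correctly isolated the subtlety caused by deleting $\alpha$. However, your resolution of that subtlety proves a modified statement rather than the stated one: the lemma's left-hand coproduct runs over \emph{all} of $\Phi_{2}(T)$, while you discard the codimension-$2$ subfaces having $\alpha$ as one of their two factorizations. Discarding is not necessary, and it silently changes the object whose coequalizer you compute. To prove the lemma as stated, keep those summands and define \emph{both} parallel arrows on such a summand $\Omega[S]$ to be the same map, namely the one induced by the unique factorization of $S\to T$ through a face different from $\alpha$; such summands then impose only identifications of the form $x\sim x$ and leave the coequalizer unchanged. (This is also the only sensible reading of the paper's defining diagram, which is ill-posed on exactly these summands.)

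The genuine gap is the word ``hence'' in ``factor through both, hence through a common codimension-$2$ subface''. That assertion is the entire mathematical content of the lemma; every other step of your proof is formal. What the coequalizer argument actually needs is: for distinct faces $\beta,\beta'\ne\alpha$, every dendrex of $\partial_{\beta}\Omega[T]\cap\partial_{\beta'}\Omega[T]$ lies in the image of some codimension-$2$ subface whose two factorizations (Proposition \ref{pro:Sub-faceOfASub-Face}) pass through $\beta$ and through $\beta'$ \emph{specifically}. A common subface whose factorization pair is, say, $\{\beta,\beta''\}$ only forces the identification of the copy of $x$ in $\Omega[R_{\beta}]$ with its copy in $\Omega[R_{\beta''}]$, and the equivalence relation generated by all such identifications need not then relate the copies in $\Omega[R_{\beta}]$ and $\Omega[R_{\beta'}]$; if it fails to, the coequalizer surjects onto $\Lambda^{\alpha}[T]$ without being injective. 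Proving the required statement is a nontrivial case analysis on the kinds of faces of a tree (inner/inner, inner/outer, outer/outer, including root faces), and it is precisely what the cited references carry out. Note also that in $\Omega$, unlike in $\Delta$, such an intersection is typically not the image of a single codimension-$2$ subface but a union of several (for a tree with one inner edge $e$ whose top vertex is $v$, the faces $\partial_{e}$ and $\partial_{v}$ meet in a disjoint union of copies of $\Omega[\eta]$), so your elementary union fact must also be upgraded to allow several apexes jointly covering each pairwise intersection --- harmless, but it shows the identification with the literal union-of-subsets diagram is not ``precise'' as claimed.
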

\begin{cor}
A horn $\Lambda^{\alpha}[T]\rightarrow X$ in $X$ corresponds exactly
to a sequence $\{x_{R}\}_{R\rightarrow T\ne\alpha\in\Phi_{1}(T)}$
of dendrices that agree on common faces in the sense that if $\beta:S\rightarrow T$
is a subface of codimension $2$ which factors as\[
\xymatrix{ & R_{1}\ar[rd]^{\alpha_{1}}\\
S\ar[rr]^{\beta}\ar[ru]^{\beta_{1}}\ar[rd]_{\beta_{2}} &  & T\\
 & R_{2}\ar[ru]_{\alpha_{2}}}
\]
then $\beta_{1}^{*}(x_{R_{1}})=\beta_{2}^{*}(x_{R_{2}}).$\end{cor}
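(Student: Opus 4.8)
The plan is to read the statement off directly from the preceding Lemma, which exhibits $\Lambda^{\alpha}[T]$ as the coequalizer of the two maps $\gamma_{1},\gamma_{2}$ between the displayed coproducts, exactly as the boundary Corollary was read off from the analogous coequalizer presentation of $\partial\Omega[T]$. First I would invoke the universal property of the coequalizer: a map $\Lambda^{\alpha}[T]\to X$ of dendroidal sets is the same datum as a map $\coprod_{R\to T\ne\alpha\in\Phi_{1}(T)}\Omega[R]\to X$ that coequalizes $\gamma_{1}$ and $\gamma_{2}$, that is, whose composites with $\gamma_{1}$ and with $\gamma_{2}$ agree.

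Next I would decompose this into components. A map out of a coproduct is precisely a family of maps $\Omega[R]\to X$, one for each codimension-one face $R\to T$ distinct from $\alpha$; and by the Yoneda Lemma in the form $X_{R}\cong dSet(\Omega[R],X)$ recorded above, each such map is the same as a dendrex $x_{R}\in X_{R}$. Thus, before imposing the coequalizing condition, a map out of the middle coproduct is exactly a family $\{x_{R}\}_{R\to T\ne\alpha}$, and it only remains to translate the equation $f\gamma_{1}=f\gamma_{2}$. By Proposition \ref{pro:Sub-faceOfASub-Face} each codimension-two subface $\beta\colon S\to T$ factors in exactly two ways, say $\beta=\alpha_{1}\beta_{1}=\alpha_{2}\beta_{2}$ with $\alpha_{i}\colon R_{i}\to T$ faces, and these are precisely the data from which $\gamma_{1},\gamma_{2}$ are built. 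On the $\beta$-component the two composites are $\Omega[S]\xrightarrow{\Omega[\beta_{i}]}\Omega[R_{i}]\xrightarrow{x_{R_{i}}}X$, which correspond under Yoneda to the dendrices $\beta_{i}^{*}(x_{R_{i}})\in X_{S}$. Hence coequalizing $\gamma_{1}$ and $\gamma_{2}$ is exactly the requirement $\beta_{1}^{*}(x_{R_{1}})=\beta_{2}^{*}(x_{R_{2}})$ for every such $\beta$, which is the asserted matching condition.

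The point that needs care, and where the horn case genuinely differs from the boundary case, is the bookkeeping of the index sets. Since the face $\alpha$ is omitted from the horn, the matching equation attached to a codimension-two subface $\beta$ is only meaningful, and only imposed, when \emph{both} of its codimension-one factorizations $\alpha_{1},\alpha_{2}$ differ from $\alpha$; a subface one of whose faces is $\alpha$ lies in a single face of $\Lambda^{\alpha}[T]$ and so carries no gluing condition, and must be excluded from the left-hand coproduct. Verifying that $\gamma_{1},\gamma_{2}$ are correctly restricted to range only over such $\beta$, and confirming via Proposition \ref{pro:Sub-faceOfASub-Face} that each contributes exactly one equation so that none is missed or double-counted, is the main (though still routine) obstacle; throughout, the directions of the restrictions $\beta_{i}^{*}$, arising from the contravariance of the presheaf $X$ against the covariant functor $\Omega[-]$, should be tracked with care.
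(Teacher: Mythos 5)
Your proposal is correct and is essentially the paper's own argument: the corollary is read off directly from the coequalizer presentation of $\Lambda^{\alpha}[T]$ in the preceding lemma, using the universal property of the coequalizer together with the Yoneda identification $X_{R}\cong dSet(\Omega[R],X)$, exactly as in the boundary case. Your extra care about the indexing of the left-hand coproduct (excluding codimension-two subfaces one of whose codimension-one factorizations is $\alpha$) is warranted and matches the intended reading of the lemma, whose parallel arrows are only defined on summands where both factorizations land in faces distinct from $\alpha$.
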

\begin{rem}
In the special case where the tree $T$ is linear we obtain the equivalent
results for simplicial sets. Namely, the presentation of the boundary
$\partial\Delta[n]$ and of the horn $\Lambda^{k}[n]$ as colimits
of standard simplices, and the description of a horn $\Lambda^{k}[n]\rightarrow X$
in a simplicial set $X$ (see \cite{Goers Jardin}).
\end{rem}
We are now able to define the dendroidal sets that model $\infty$-operads. 
\begin{defn}
A dendroidal set $X$ is an $\infty$-\emph{operad }if every inner
horn $h:\Lambda^{e}[T]\to X$ has a filler $\hat{h}:\Omega[T]\to X$
making the diagram\[
\xymatrix{\Lambda^{e}[T]\ar[d]\ar[r]^{h} & X\\
\Omega[T]\ar[ur]_{\hat{h}}}
\]
commute. 
\end{defn}
The following relation between $\infty$-categories and $\infty$-operads
is trivial to prove:
\begin{prop}
If $X$ is an $\infty$-category then $i_{!}(X)$ is an $\infty$-operad.
If $Y$ is an $\infty$-operad then $i^{*}(Y)$ is an $\infty$-category. 
\end{prop}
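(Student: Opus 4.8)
The plan is to play the two functors off against the adjunction $i_{!}\dashv i^{*}$ and the relation $i^{*}i_{!}\cong id$ recorded above, together with the single combinatorial input from the Remark that $i_{!}$ carries the simplicial inner horn inclusion $\Lambda^{k}[n]\hookrightarrow\Delta[n]$ to the dendroidal inner horn inclusion $\Lambda^{e}[L_{n}]\hookrightarrow\Omega[L_{n}]$, where $\Omega[L_{n}]=i_{!}(\Delta[n])$. I would prove the second assertion first, as it is purely formal. Given a simplicial inner horn $\Lambda^{k}[n]\to i^{*}(Y)$, transpose it across the adjunction to a dendroidal map $i_{!}(\Lambda^{k}[n])=\Lambda^{e}[L_{n}]\to Y$; since this is an inner horn and $Y$ is an $\infty$-operad it admits a filler $\Omega[L_{n}]=i_{!}(\Delta[n])\to Y$, and transposing back yields, by naturality of the adjunction, a map $\Delta[n]\to i^{*}(Y)$ restricting to the given horn. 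Hence $i^{*}(Y)$ is an $\infty$-category.

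For the first assertion I would fix an inner horn $h\colon\Lambda^{e}[T]\to i_{!}(X)$ and distinguish two cases according to the shape $T$. When $T$ is a linear tree $L_{n}$, the inclusion $\Lambda^{e}[L_{n}]\hookrightarrow\Omega[L_{n}]$ is exactly $i_{!}$ applied to $\Lambda^{k}[n]\hookrightarrow\Delta[n]$; because $i^{*}i_{!}\cong id$ makes $i_{!}$ fully faithful, the horn $h$ transposes to a simplicial inner horn $\Lambda^{k}[n]\to X$, which admits a filler since $X$ is an $\infty$-category, and applying $i_{!}$ to that filler produces the desired filler of $h$.

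The substantive case is when $T$ is not linear, and here the claim is that no inner horn over such a $T$ maps into $i_{!}(X)$ at all, so that the filling condition is vacuous. The key lemma is a concentration statement: for every simplicial set $X$ and every non-linear tree $R$ one has $i_{!}(X)_{R}=\emptyset$. This follows by writing $i_{!}(X)$ as a colimit of representables $\Omega[L_{m}]$ and noting that $\Omega[L_{m}]_{R}=\Omega(R,L_{m})=\emptyset$ whenever $R$ is non-linear, because a map of operads preserves arity while $\Omega(L_{m})$ has only unary non-identity operations. Granting this, I would locate inside the horn $\Lambda^{e}[T]$ a face of non-linear shape: choosing a vertex $w$ of $T$ of valence $\ne 1$, the vertices of $T$ together with its inner edges form a graph-theoretic tree whose leaves are precisely the outer vertices (those meeting a single inner edge); such a tree on at least two nodes has at least two leaves, so there is an outer vertex $v\ne w$. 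Deleting $v$ leaves the valence of $w$ unchanged, so the outer face $T/v$ is again non-linear, and it differs from the contracted face $\partial_{e}$; by the concentration lemma this face carries no dendrex in $i_{!}(X)$, whence $h$ cannot exist.

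I expect the main obstacle to be exactly this last point --- verifying that a non-linear tree carrying an inner edge always possesses an outer face whose shape is non-linear and distinct from $\partial_{e}$. The two transpositions and the concentration lemma are formal; the care is entirely in the tree bookkeeping, in particular in treating nullary vertices so that the count of outer vertices (equivalently, the leaf count of the associated graph-theoretic tree) is not disturbed.
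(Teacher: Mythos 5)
Your proof is correct. There is nothing in the paper to compare it against line by line: the proposition is stated with the remark that it is ``trivial to prove'' and no argument is given, so your write-up supplies precisely what the paper elides, and it is the natural (surely the intended) argument. The second assertion is, as you say, purely formal from $i_{!}\dashv i^{*}$ together with the paper's Remark that $i_{!}$ carries the simplicial inner horn inclusion $\Lambda^{k}[n]\subseteq\Delta[n]$ to the dendroidal inner horn inclusion $\Lambda^{e}[L_{n}]\subseteq\Omega[L_{n}]$. The genuine content is in the first assertion, and both of your ingredients check out: the concentration lemma $i_{!}(X)_{R}=\emptyset$ for non-linear $R$ holds because presheaf colimits are computed pointwise and $\Omega(R,L_{m})=Ope(\Omega(R),\Omega(L_{m}))=\emptyset$ (operad maps preserve arity, $\Omega(L_{m})$ has only unary operations, and non-linearity of $R$ yields a vertex, possibly nullary, of valence $\neq 1$); and the combinatorial step is sound, since the graph whose nodes are the vertices of $T$ and whose edges are the inner edges is a tree with at least two nodes (because $e$ is inner), hence has at least two leaves, giving an outer vertex $v\neq w$; the outer face $\partial_{v}$ deletes only $v$ and the outer edges attached to $v$ (any edge joining $v$ to $w$ is inner), so $w$ survives in $T/v$ with its valence intact, and $\partial_{v}$, being outer, lies in $\Lambda^{e}[T]$. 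The one point I would phrase more carefully: ``$i^{*}i_{!}\cong id$ makes $i_{!}$ fully faithful'' should be read as saying that the unit of the adjunction is an isomorphism, which is how that relation arises in the paper ($i:\Delta\to\Omega$ being a full embedding); as literally stated it requires the small extra observation that an arbitrary natural isomorphism $i^{*}i_{!}\cong id$ forces the unit itself to be invertible.
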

It is not hard to see that given any symmetric operad $\mathcal{P}$
its dendroidal nerve $N_{d}(\mathcal{P})$ is an $\infty$-operad.
In fact we can characterize those dendroidal sets occurring as nerves
of operads as follows. 
\begin{defn}
An $\infty$-operad $X$ is called \emph{strict }if any inner horn
in $X$ as above has a unique filler.\end{defn}
\begin{lem}
\label{lem:strict iff operad}A dendroidal set $X$ is a strict $\infty$-operad
if, and only if, there is an operad $\mathcal{P}$ such that $N_{d}(\mathcal{P})\cong X$. 
\end{lem}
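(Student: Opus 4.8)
The plan is to prove both implications through the adjunction $\tau_d \dashv N_d$ together with the colimit presentations of horns established above, reducing the real content to two combinatorial facts about the free operads $\Omega(T)$. For the \emph{if} direction, suppose $X = N_d(\mathcal{P})$, fix a tree $T$ and an inner edge $e$, and consider the inclusion $\iota : \Lambda^e[T] \hookrightarrow \Omega[T]$. Unique filling of every inner horn $\Lambda^e[T] \to N_d(\mathcal{P})$, for all operads $\mathcal{P}$ at once, is equivalent by the adjunction and the Yoneda lemma to $\tau_d(\iota)$ being an isomorphism of operads, using $\tau_d(\Omega[T]) \cong \Omega(T)$. Since $\tau_d$ is a left adjoint it preserves colimits, so applying it to the coequalizer presentation of $\Lambda^e[T]$ expresses $\tau_d(\Lambda^e[T])$ as the corresponding coequalizer of the free operads $\Omega(S)$ and $\Omega(R)$ over the subfaces and over the faces $R \ne \partial_e$. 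The claim to verify is that this coequalizer is $\Omega(T)$: every vertex of $T$ survives as a corolla in some face $R \ne \partial_e$ (for the two vertices adjacent to $e$ this uses the outer faces obtained by pruning them), so every generator of the free operad $\Omega(T)$ is present, while the single operation that $\partial_e$ would contribute — the operation at the contracted vertex — is exactly the $\circ_i$-composite of the two operations adjacent to $e$ and hence already forced. Thus $\tau_d(\iota)$ is an isomorphism and $N_d(\mathcal{P})$ is a strict $\infty$-operad.

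For the \emph{only if} direction, let $X$ be a strict $\infty$-operad and set $\mathcal{P} = \tau_d(X)$; because $X$ is strict this fundamental operad admits the concrete description in which $\mathcal{P}_0 = X_\eta$, the operations of arity $n$ are the corolla dendrices $X_{C_n}$ (with domain and codomain read off from the leaf- and root-edges), identities are degeneracies, and composition is defined by grafting two corollas, filling the unique inner horn of the resulting tree, and restricting along $\partial_e$. Unique inner filling makes the unit, associativity, and equivariance axioms automatic, since any two bracketings of a composite are fillers of the same inner horn in a larger tree and so coincide. I would then show that the adjunction unit $\phi : X \to N_d(\mathcal{P})$ is an isomorphism by induction on the number of vertices of $T$. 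The cases $T = \eta$ and $T = C_n$ hold by the definition of $\mathcal{P}$. For $T$ with at least two vertices, choose an inner edge $e$; since every $T$-dendrix of a strict $\infty$-operad is the unique filler of its restriction to $\Lambda^e[T]$, the set $X_T$ is in bijection with the set of inner horns $\Lambda^e[T] \to X$, and likewise for $N_d(\mathcal{P})$ by the \emph{if} direction. The corollary describing horns as matching families of dendrices on the faces $R \ne \partial_e$, all of which have fewer vertices, lets the inductive hypothesis make $\phi$ a bijection on these horns and hence on $T$-dendrices.

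The main obstacle is the combinatorial lemma $\tau_d(\Lambda^e[T]) \cong \Omega(T)$ underpinning the \emph{if} direction. The delicate point is not that the contracted operation is redundant — that is immediate from the operad axioms — but the bookkeeping that shows \emph{every} vertex-generator of $\Omega(T)$ is retained by some remaining face and that the coequalizer imposes no spurious identifications; the extreme case, where $T$ has a single inner edge and the horn consists only of the two outer faces pruning the vertices adjacent to $e$, is the one to check with care. Everything else — the verification of the operad axioms for $\mathcal{P}$ and the inductive identification of dendrices — is then a formal consequence of the uniqueness of inner fillers and of the colimit descriptions of boundaries and horns recorded above.
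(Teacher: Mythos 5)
The paper states this lemma without proof (it is one of the results the section defers to the cited Moerdijk--Weiss papers), so there is no in-text argument to compare yours against; judged on its own, your proposal is correct in outline and is essentially the standard argument from the literature. Your packaging of the \emph{if} direction is clean: by the adjunction $\tau_d\dashv N_d$, the identification $\tau_d(\Omega[T])\cong\Omega(T)$, Yoneda, and the fact that $\tau_d$ preserves the coequalizer presentation of horns, everything reduces to the single combinatorial statement $\tau_d(\Lambda^e[T])\cong\Omega(T)$, proved once rather than re-argued for each $\mathcal{P}$. The \emph{only if} direction --- operad axioms forced by uniqueness of fillers, then induction on the number of vertices using the bijection between $T$-dendrices and their horn restrictions together with the matching-family description of horns --- is likewise the standard and correct route.

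Two points need care in a complete write-up, both at places you yourself flag. First, all the real content sits in $\tau_d(\Lambda^e[T])\cong\Omega(T)$, and the bookkeeping there is more than checking that the contracted operation is a composite: one must verify that a compatible family on the faces extends to $\Omega(T)$, which requires the codimension-$2$ identifications both to make the values on common vertices agree \emph{and} to force the value of an inner face at its merged vertex to be the corresponding $\circ_i$-composite. Moreover, in your extreme case (a single inner edge) this hinges on a convention the survey's definitions do not actually supply: with outer faces defined only at vertices having \emph{exactly one} inner edge, a corolla has no faces, so $\Phi_2(T)$ would be empty and the paper's coequalizer would present $\Lambda^e[T]$ as the \emph{disjoint} union of the two corolla faces, whose image under $\tau_d$ fails to identify the colour $e$. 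You need the Moerdijk--Weiss convention that the edge inclusions $\eta\to C_n$ are faces of a corolla, making $\eta_e\to T$ a codimension-$2$ subface and restoring the gluing. Second, setting $\mathcal{P}=\tau_d(X)$ and asserting that it "admits the concrete description" comes close to assuming what is to be proved, since computing $\tau_d$ of a non-representable colimit is exactly the hard part; it is cleaner to \emph{define} $\mathcal{P}$ by that concrete description, construct the canonical map $X\to N_d(\mathcal{P})$, run your induction, and only afterwards conclude $\mathcal{P}\cong\tau_d(X)$ from $\tau_d N_d\cong\mathrm{id}$. Neither point invalidates the strategy; both are repairable within your outline.
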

A family of examples of paramount importance of $\infty$-operads
are given by the following. Recall that when $\mathcal{E}$ is a symmetric monoidal
model category a symmetric operad $\mathcal{P}$ enriched in $\mathcal{E}$
is called \emph{locally fibrant} if each hom-object in $\mathcal{P}$
is fibrant in $\mathcal{E}$. 
\begin{thm}
Let $\mathcal{P}$ be a locally fibrant symmetric operad in $\mathcal{E}$,
where $\mathcal{E}$ is a homotopy environment. The homotopy coherent
nerve $hcN_{d}(\mathcal{P})$ is an $\infty$-operad. 
\end{thm}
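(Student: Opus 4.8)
The plan is to turn the horn-filling condition into a lifting problem in the category of $\mathcal{E}$-enriched operads and then to recognize the relevant map as a trivial cofibration. First I would record the adjunction underlying the construction: the functor $\Omega\to Ope(\mathcal{E})$, $T\mapsto W(\Omega(T))$, extends by left Kan extension along the Yoneda embedding to a colimit-preserving functor $w_{!}\colon dSet\to Ope(\mathcal{E})$ whose right adjoint is precisely $hcN_{d}$, so that $w_{!}(\Omega[T])=W(\Omega(T))$. By adjunction, a filler for an inner horn $\Lambda^{e}[T]\to hcN_{d}(\mathcal{P})$ exists if and only if $\mathcal{P}$ has the right lifting property against the induced map of enriched operads $w_{!}(\Lambda^{e}[T])\to w_{!}(\Omega[T])=W(\Omega(T))$. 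This map is the identity on colours (the edge set $A=E(T)$), so after changing colours along the object part of the given horn I may assume the lifting problem lives in $Ope(\mathcal{E})_{A}$; there the hypothesis that $\mathcal{P}$ is locally fibrant says exactly that (the pullback of) $\mathcal{P}$ is a fibrant object for the Berger-Moerdijk model structure, since its fibrations are the hom-wise fibrations.

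The heart of the argument is then the claim that $w_{!}(\Lambda^{e}[T])\to W(\Omega(T))$ is a trivial cofibration in $Ope(\mathcal{E})_{A}$; granting this, the desired lift exists because $\mathcal{P}$ is fibrant. To prove the claim I would analyse the map one hom-object at a time. Using the coequalizer presentation of the horn together with the fact that a codimension-two subface factors as a composite of faces in exactly two ways (Proposition \ref{pro:Sub-faceOfASub-Face}), one checks that the only hom-object on which $w_{!}(\Lambda^{e}[T])$ is a proper subobject of $W(\Omega(T))$ is the top one, namely the object of operations from the full set of leaves of $T$ to its root; on every other hom-object each relevant subface factors through an included (outer or other inner) face, so the map is an isomorphism there. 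This is exactly where inner-ness is used: the single omitted face $\partial_{e}$, being of codimension one, has no alternative factorization, so it is genuinely missing only at the top level.

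It then remains to identify this top hom-object and the inclusion concretely. Since $\Omega(T)$ is the free operad on $T$, the $W$-construction makes this hom-object the cube $H^{\otimes I(T)}$ indexed by the set $I(T)$ of inner edges, with the length-$0$ facet in the $f$-direction corresponding to the inner face $\partial_{f}$ and with the length-$1$ facets and their composites arising from the grafting of operations supplied by the outer faces. Tracking these identifications, the restriction of $w_{!}(\Lambda^{e}[T])$ to this hom-object is precisely the boundary of the cube with the interior of the single facet $\{x_{e}=0\}$ removed, and I would show that its inclusion into $H^{\otimes I(T)}$ is a trivial cofibration. This is the box inclusion, which is built from the endpoint inclusion $\{1\}\hookrightarrow H$: by the interval-object axioms the weak equivalence $\epsilon\colon H\to I$ exhibits this endpoint as a section of a weak equivalence and hence, together with the cofibration and $\vee$-structure data, as a trivial cofibration, and iterated pushout-products of trivial cofibrations with cofibrations remain trivial cofibrations in the monoidal model category $\mathcal{E}$.

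The main obstacle I anticipate is exactly this last combinatorial identification: verifying carefully that the operadic composites of the included faces fill out precisely the $\{x_{f}=1\}$ facets of the cube, so that what is left uncovered is only the open facet $\{x_{e}=0\}$ and nothing more, and that this happens compatibly with the symmetric-group actions so that the whole map is equivariant. A clean way to organize this is to filter $W(\Omega(T))$ by the number of vertices, equivalently by cube dimension, and to run an induction in which each stage is obtained from the previous one by pushout along a box inclusion; the interval-object axioms then guarantee at every stage that one is attaching along a trivial cofibration, and the saturation properties of trivial cofibrations finish the proof.
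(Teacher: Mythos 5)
The paper itself gives no proof of this theorem (it is stated as imported from the cited reference \cite{inn Kan in dSet}), and your proposal is a correct reconstruction of essentially that intended argument: pass through the adjunction $w_{!}\dashv hcN_{d}$ with $w_{!}(\Omega[T])=W(\Omega(T))$, reduce to a lifting problem against $w_{!}(\Lambda^{e}[T])\to W(\Omega(T))$, observe that for an \emph{inner} horn this map is an isomorphism on colours and on every hom-object except the one from the leaves of $T$ to its root (every proper subtree still factors through a retained face, which is exactly where inner-ness enters), and identify the remaining inclusion as the box inclusion, a pushout-product of the absorbing endpoint $\{1\}\hookrightarrow H$ with cube boundaries, hence a trivial cofibration that local fibrancy of $\mathcal{P}$ lifts against. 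The only caveats are minor and consistent with the paper's own level of rigor: invoking the Berger--Moerdijk model structure (or cofibrancy of the unit $I$) requires the same standing ``mild conditions'' on $\mathcal{E}$ that the paper assumes throughout.
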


\subsection{The Cisinski-Moerdijk model category structure on $dSet$}

The objective of this section is to present the Cisinski-Moerdijk
model structure on $dSet$. All of the material in this section is
taken from \cite{dSet model hom op}, to which the reader is referred
to for more information and the proofs. In this model structure $\infty$-operads
are the fibrant objects and it is closely related to the operadic
model structure on $Ope$ and to the Joyal model structure on $sSet$. 

We note immediately that the Cisinski-Moerdijk model structure is
not a Cisinski model structure (i.e., a model structure on a presheaf
category such that the cofibrations are precisely the monomorphisms)
due to a technical complication that prevents the direct application
of the techniques developed in \cite{Cisinski his model stru}. Indeed,
the cofibrations in the model structure are the so-called \emph{normal
monomorphisms}.
\begin{defn}
A monomorphism of dendroidal sets $f:X\rightarrow Y$ is \emph{normal}
if for every dendrex $t\in Y_{T}$ that does not factor through $f$
the only isomorphism of $T$ that fixes $t$ is the identity. 
\end{defn}
An important property of dendroidal sets, proved in \cite{inn Kan in dSet},
is the following. 
\begin{thm}
\label{thm:exp}Let $X$ be a normal dendroidal set (i.e., $\emptyset\to X$
is normal) and $Y$ an $\infty$-operad. The dendroidal set $[X,Y]$
is again an $\infty$-operad. \end{thm}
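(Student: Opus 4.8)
The plan is to characterize the $\infty$-operad condition on $[X,Y]$ as a right lifting property and then transport it, across the tensor--hom adjunction of the closed monoidal structure on $dSet$, into a statement about $Y$. By definition, $[X,Y]$ is an $\infty$-operad exactly when every inner horn $\Lambda^{e}[T]\to[X,Y]$ extends along the inclusion $\Lambda^{e}[T]\hookrightarrow\Omega[T]$, i.e.\ when $[X,Y]$ has the right lifting property against all inner horn inclusions. Using the adjunction $dSet(Z,[X,Y])\cong dSet(X\otimes Z,Y)$ furnished by the internal hom, the lifting problem
\[
\xymatrix{\Lambda^{e}[T]\ar[d]\ar[r] & [X,Y]\\
\Omega[T]\ar@{..>}[ur]}
\]
transposes to the lifting problem
\[
\xymatrix{X\otimes\Lambda^{e}[T]\ar[d]\ar[r] & Y\\
X\otimes\Omega[T]\ar@{..>}[ur]}
\]
in which the left vertical map is $\mathrm{id}_{X}\otimes(\Lambda^{e}[T]\hookrightarrow\Omega[T])$. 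Hence $[X,Y]$ is an $\infty$-operad if, and only if, $Y$ has the right lifting property against every map $X\otimes\Lambda^{e}[T]\to X\otimes\Omega[T]$.

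Next I would bring in that $Y$ is itself an $\infty$-operad. Call a monomorphism of dendroidal sets \emph{inner anodyne} if it belongs to the smallest class of maps containing all inner horn inclusions $\Lambda^{e}[T]\hookrightarrow\Omega[T]$ and closed under pushout, transfinite composition, and retract. Since the class of maps against which a fixed dendroidal set has the right lifting property is always closed under these operations, and $Y$ lifts against all inner horn inclusions, $Y$ lifts against every inner anodyne map. By the previous paragraph it therefore suffices to prove that, whenever $X$ is normal, the map $X\otimes\Lambda^{e}[T]\hookrightarrow X\otimes\Omega[T]$ is inner anodyne.

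Now I would recognize this map as a pushout--product. For monomorphisms $i\colon A\to B$ and $j\colon C\to D$, write their pushout--product for the induced map
\[
(B\otimes C)\cup_{A\otimes C}(A\otimes D)\longrightarrow B\otimes D.
\]
Taking $i$ to be $\emptyset\to X$ and $j$ the inner horn inclusion $\Lambda^{e}[T]\hookrightarrow\Omega[T]$, and using that $\emptyset\otimes(-)=\emptyset$ (as $-\otimes Z$ is a left adjoint and so preserves colimits), the pushout--product is exactly $X\otimes\Lambda^{e}[T]\hookrightarrow X\otimes\Omega[T]$. Because $X$ is normal, $\emptyset\to X$ is a normal monomorphism. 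Thus the theorem reduces to the \emph{pushout--product property}: the pushout--product of a normal monomorphism with an inner anodyne map is again inner anodyne.

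The hard part is exactly this pushout--product property, proved in \cite{inn Kan in dSet}. The strategy I would follow is first to reduce, by saturation arguments applied to each variable separately, to the single case where the normal monomorphism is a boundary inclusion $\partial\Omega[S]\hookrightarrow\Omega[S]$ and the inner anodyne map is one inner horn inclusion $\Lambda^{e}[T]\hookrightarrow\Omega[T]$. One then filters the tensor product $\Omega[S]\otimes\Omega[T]$ by its nondegenerate dendrices, organized according to the ``shuffles'' of the trees $S$ and $T$, and shows that each successive stage of the filtration is obtained from the previous one by pushing out inner horns, so that the whole inclusion is a transfinite composite of pushouts of inner horns. The normality hypothesis is indispensable precisely here: it guarantees that the dendrices being attached have trivial stabilizer under the symmetric-group actions, so that each attaching map really is a single inner horn inclusion rather than a quotient of one; without it the filtration degenerates and the property fails. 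The detailed combinatorial bookkeeping of the shuffles is the genuine content and is carried out in \cite{inn Kan in dSet}.
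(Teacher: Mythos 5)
Your proposal is correct and follows essentially the same route as the paper: the paper's proof is precisely the anodyne-extension technique you describe (lifting transposed across the tensor--hom adjunction, reduction to the pushout--product property for normal monomorphisms against inner anodynes, and the filtration of $\Omega[S]\otimes\Omega[T]$ by the poset of percolation trees, i.e.\ your ``shuffles''), with the combinatorial details deferred to the same reference \cite{inn Kan in dSet}. Your write-up in fact makes explicit the formal reductions that the paper leaves implicit, so there is nothing to correct.
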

\begin{proof}
The proof uses the technique of anodyne extensions, as commonly used
in the theory of simplicial sets (e.g., \cite{Gab Zisman,Goers Jardin}),
suitably adapted to dendroidal sets. Technically though, the dendroidal
case is much more difficult. For simplicial sets there is a rather
simple description of the non-degenerate simplices of $\Delta[n]\times\Delta[k]$.
But for trees $S$ and $T$ a similar description of the non-degenerate
dendrices of $\Omega[S]\otimes\Omega[T]$ is given by the so called
poset of percolation trees associated with $S$ and $T$. Complete
details can be found in \cite{inn Kan in dSet} and we just briefly
illustrate the construction for the trees $S$ and $T$: \[
\begin{array}{ccc}
\xymatrix{ & *{\,}\ar@{-}[dr] &  & *{\,}\ar@{-}[dl]\\
 &  & *{\circ}\ar@{-}[d]_{e}\\
S= &  & *{\circ}\ar@{-}[d]\\
 &  & *{\,}}
 & \quad\quad & \xymatrix{ & *{\,}\ar@{-}[d]_{3} &  & *{\,}\ar@{-}[d]_{5}\\
 & *{\bullet}\ar@{-}[dr]_{2} &  & *{\bullet}\ar@{-}[dl]_{4}\\
T= &  & *{\bullet}\ar@{-}[d]_{1}\\
 &  & *{\,}}
\end{array}\]
on the following page.

\newpage

The presentation of $\Omega[T]\otimes\Omega[S]$ is given by the 14
trees $T_{1},\cdots,T_{14}$:

$\xyR{5pt}\xyC{5pt}\xymatrix{*{}\ar@{-}[d] &  & *{}\ar@{-}[d] &  & *{}\ar@{-}[d] &  & *{}\ar@{-}[d]\\
*{\bullet}\ar@{-}[dr] &  & *{\bullet}\ar@{-}[dl] &  & *{\bullet}\ar@{-}[dr] &  & *{\bullet}\ar@{-}[dl]\\
 & *{\bullet}\ar@{-}[drr] &  &  &  & *{\bullet}\ar@{-}[dll]\\
 &  &  & *{\circ}\ar@{-}[d]_{e_{1}}\\
 &  &  & *{\circ}\ar@{-}[d]\\
 &  &  & *{}\\
 &  &  & T_{1}}
$~~~~~$\xyR{5pt}\xyC{5pt}\xymatrix{*{}\ar@{-}[d] &  & *{}\ar@{-}[d] &  & *{}\ar@{-}[d] &  & *{}\ar@{-}[d]\\
*{\bullet}\ar@{-}[dr] &  & *{\bullet}\ar@{-}[dl] &  & *{\bullet}\ar@{-}[dr] &  & *{\bullet}\ar@{-}[dl]\\
 & *{\circ}\ar@{-}[drr]_{e_{2}} &  &  &  & *{\circ}\ar@{-}[dll]^{e_{4}}\\
 &  &  & *{\bullet}\ar@{-}[d]_{e_{1}}\\
 &  &  & *{\circ}\ar@{-}[d]\\
 &  &  & *{}\ar@{-}[]\\
 &  &  & T_{2}}
$~~~~~$\xyR{5pt}\xyC{5pt}\xymatrix{*{}\ar@{-}[d] &  & *{}\ar@{-}[d] &  & *{}\ar@{-}[dr] &  & *{}\ar@{-}[dl]\\
*{\bullet}\ar@{-}[dr] &  & *{\bullet}\ar@{-}[dl] &  &  & *{\circ}\ar@{-}[d]_{e_{5}}\\
 & *{\circ}\ar@{-}[drr]_{e_{2}} &  &  &  & *{\bullet}\ar@{-}[dll]^{e_{4}}\\
 &  &  & *{\bullet}\ar@{-}[d]_{e_{1}}\\
 &  &  & *{\circ}\ar@{-}[d]\\
 &  &  & *{}\\
 &  &  & T_{3}}
$

$\xyR{5pt}\xyC{5pt}\xymatrix{*{}\ar@{-}[dr] &  & *{}\ar@{-}[dl] &  & *{}\ar@{-}[d] &  & *{}\ar@{-}[d]\\
 & *{\circ}\ar@{-}[d]_{e_{3}} &  &  & *{\bullet}\ar@{-}[dr] &  & *{\bullet}\ar@{-}[dl]\\
 & *{\bullet}\ar@{-}[drr]_{e_{2}} &  &  &  & *{\circ}\ar@{-}[dll]^{e_{4}}\\
 &  &  & *{\bullet}\ar@{-}[d]_{e_{1}}\\
 &  &  & *{\circ}\ar@{-}[d]\\
 &  &  & *{}\\
 &  &  & T_{4}}
$~~~~~$\xyR{5pt}\xyC{5pt}\xymatrix{*{}\ar@{-}[dr] &  & *{}\ar@{-}[dl] &  & *{}\ar@{-}[dr] &  & *{}\ar@{-}[dl]\\
 & *{\circ}\ar@{-}[d]_{e_{3}} &  &  &  & *{\circ}\ar@{-}[d]^{e_{5}}\\
 & *{\bullet}\ar@{-}[drr]_{e_{2}} &  &  &  & *{\bullet}\ar@{-}[dll]^{e_{4}}\\
 &  &  & *{\bullet}\ar@{-}[d]_{e_{1}}\\
 &  &  & *{\circ}\ar@{-}[d]\\
 &  &  & *{}\\
 &  &  & T_{5}}
$~~~~~$\xyR{5pt}\xyC{5pt}\xymatrix{*{}\ar@{-}[d] &  & *{}\ar@{-}[d] &  & *{}\ar@{-}[d] &  & *{}\ar@{-}[d]\\
*{\bullet}\ar@{-}[dr] &  & *{\bullet}\ar@{-}[dl] &  & *{\bullet}\ar@{-}[dr] &  & *{\bullet}\ar@{-}[dl]\\
 & *{\circ}\ar@{-}[d]_{e_{2}} &  &  &  & *{\circ}\ar@{-}[d]_{e_{4}}\\
 & *{\circ}\ar@{-}[drr] &  &  &  & *{\circ}\ar@{-}[dll]\\
 &  &  & *{\bullet}\ar@{-}[d]\\
 &  &  & *{}\\
 &  &  & T_{6}}
$

$\xyR{5pt}\xyC{5pt}\xymatrix{*{}\ar@{-}[d] &  & *{}\ar@{-}[d] &  & *{}\ar@{-}[dr] &  & *{}\ar@{-}[dl]\\
*{\bullet}\ar@{-}[dr] &  & *{\bullet}\ar@{-}[dl] &  &  & *{\circ}\ar@{-}[d]_{e_{5}}\\
 & *{\circ}\ar@{-}[d]_{e_{2}} &  &  &  & *{\bullet}\ar@{-}[d]\\
 & *{\circ}\ar@{-}[drr] &  &  &  & *{\circ}\ar@{-}[dll]\\
 &  &  & *{\bullet}\ar@{-}[d]\\
 &  &  & *{}\\
 &  &  & T_{7}}
$~~~~~$\xyR{5pt}\xyC{5pt}\xymatrix{*{}\ar@{-}[d] &  & *{}\ar@{-}[d] &  & *{}\ar@{-}[dr] &  & *{}\ar@{-}[dl]\\
*{\bullet}\ar@{-}[dr] &  & *{\bullet}\ar@{-}[dl] &  &  & *{\circ}\ar@{-}[d]_{e_{5}}\\
 & *{\circ}\ar@{-}[d]_{e_{2}} &  &  &  & *{\circ}\ar@{-}[d]\\
 & *{\circ}\ar@{-}[drr] &  &  &  & *{\bullet}\ar@{-}[dll]\\
 &  &  & *{\bullet}\ar@{-}[d]\\
 &  &  & *{}\\
 &  &  & T_{8}}
$~~~~~$\xyC{5pt}\xyR{5pt}\xymatrix{*{}\ar@{-}[dr] &  & *{}\ar@{-}[dl] &  & *{}\ar@{-}[d] &  & *{}\ar@{-}[d]\\
 & *{\circ}\ar@{-}[d] &  &  & *{\bullet}\ar@{-}[dr] &  & *{\bullet}\ar@{-}[dl]\\
 & *{\bullet}\ar@{-}[d] &  &  &  & *{\circ}\ar@{-}[d]\\
 & *{\circ}\ar@{-}[drr] &  &  &  & *{\circ}\ar@{-}[dll]\\
 &  &  & *{\bullet}\ar@{-}[d]\\
 &  &  & *{}\\
 &  &  & T_{9}}
$

$\xyC{5pt}\xyR{5pt}\xymatrix{*{}\ar@{-}[dr] &  & *{}\ar@{-}[dl] &  & *{}\ar@{-}[dr] &  & *{}\ar@{-}[dl]\\
 & *{\circ}\ar@{-}[d] &  &  &  & *{\circ}\ar@{-}[d]\\
 & *{\bullet}\ar@{-}[d] &  &  &  & *{\bullet}\ar@{-}[d]\\
 & *{\circ}\ar@{-}[drr] &  &  &  & *{\circ}\ar@{-}[dll]\\
 &  &  & *{\bullet}\ar@{-}[d]\\
 &  &  & *{}\\
 &  &  & T_{10}}
$~~~~~$\xyR{5pt}\xyC{5pt}\xymatrix{*{}\ar@{-}[dr] &  & *{}\ar@{-}[dl] &  & *{}\ar@{-}[dr] &  & *{}\ar@{-}[dl]\\
 & *{\circ}\ar@{-}[d]_{e_{3}} &  &  &  & *{\circ}\ar@{-}[d]_{e_{5}}\\
 & *{\bullet}\ar@{-}[d]_{e_{2}} &  &  &  & *{\circ}\ar@{-}[d]\\
 & *{\circ}\ar@{-}[drr] &  &  &  & *{\bullet}\ar@{-}[dll]\\
 &  &  & *{\bullet}\ar@{-}[d]\\
 &  &  & *{}\\
 &  &  & T_{11}}
$~~~~~$\xyC{5pt}\xyR{5pt}\xymatrix{*{}\ar@{-}[dr] &  & *{}\ar@{-}[dl] &  & *{}\ar@{-}[d] &  & *{}\ar@{-}[d]\\
 & *{\circ}\ar@{-}[d]_{e_{3}} &  &  & *{\bullet}\ar@{-}[dr] &  & *{\bullet}\ar@{-}[dl]\\
 & *{\circ}\ar@{-}[d] &  &  &  & *{\circ}\ar@{-}[d]_{e_{4}}\\
 & *{\bullet}\ar@{-}[drr] &  &  &  & *{\circ}\ar@{-}[dll]\\
 &  &  & *{\bullet}\ar@{-}[d]\\
 &  &  & *{}\\
 &  &  & T_{12}}
$

$\xyC{5pt}\xyR{5pt}\xymatrix{*{}\ar@{-}[dr] &  & *{}\ar@{-}[dl] &  & *{}\ar@{-}[dr] &  & *{}\ar@{-}[dl]\\
 & *{\circ}\ar@{-}[d]_{e_{3}} &  &  &  & *{\circ}\ar@{-}[d]_{e_{5}}\\
 & *{\circ}\ar@{-}[d] &  &  &  & *{\bullet}\ar@{-}[d]_{e_{4}}\\
 & *{\bullet}\ar@{-}[drr] &  &  &  & *{\circ}\ar@{-}[dll]\\
 &  &  & *{\bullet}\ar@{-}[d]\\
 &  &  & *{}\\
 &  &  & T_{13}}
$~~~~~$\xyC{5pt}\xyR{5pt}\xymatrix{*{}\ar@{-}[dr] &  & *{}\ar@{-}[dl] &  & *{}\ar@{-}[dr] &  & *{}\ar@{-}[dl]\\
 & *{\circ}\ar@{-}[d] &  &  &  & *{\circ}\ar@{-}[d]\\
 & *{\circ}\ar@{-}[d] &  &  &  & *{\circ}\ar@{-}[d]\\
 & *{\bullet}\ar@{-}[drr] &  &  &  & *{\bullet}\ar@{-}[dll]\\
 &  &  & *{\bullet}\ar@{-}[d]\\
 &  &  & *{}\\
 &  &  & T_{14}}
$ \\

The poset structure on these trees is \[
\xyC{10pt}\xyR{10pt}\xymatrix{ &  & T_{1}\ar@{-}[dd]\\
\\ &  & T_{2}\ar@{-}[ddrr]\ar@{-}[dd]\ar@{-}[ddll]\\
\\T_{3}\ar@{-}[dd]\ar@{-}[ddrr] &  & T_{6}\ar@{-}'[dl][ddll]\ar@{-}'[dr][ddrr] &  & T_{4}\ar@{-}[dd]\ar@{-}[ddll]\\
 & \, &  & \,\\
T_{7}\ar@{-}[ddrr]\ar@{-}[dd] &  & T_{5}\ar@{-}[dd] &  & T_{9}\ar@{-}[ddll]\ar@{-}[dd]\\
 & \,\\
T_{8}\ar@{-}[ddr] &  & T_{10}\ar@{-}[ddr]\ar@{-}[ddl] &  & T_{12}\ar@{-}[ddl]\\
\\ & T_{11}\ar@{-}[ddr] &  & T_{13}\ar@{-}[ddl]\\
\\ &  & T_{14}}
\]

\end{proof}
As a special case of this result we may now recover Boardman and Vogt's
result that the simplicial set $wAlg[\mathcal{P},\mathcal{E}]$ of
weak $\mathcal{P}$-algebras in $\mathcal{E}$ is an $\infty$-category,
as follows.
\begin{thm}
Let $\mathcal{P}$ be a symmetric operad and $\mathcal{E}$ a homotopy
environment. If the dendroidal set $N_{d}(\mathcal{P})$ is normal
and $hcN_{d}(\hat{\mathcal{E}})$ is an $\infty$-operad then $wAlg[\mathcal{P},\mathcal{E}]$
is an $\infty$-category.\end{thm}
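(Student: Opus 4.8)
The plan is to recognize $wAlg[\mathcal{P},\mathcal{E}]$ as the restriction to linear trees of the dendroidal set of weak $\mathcal{P}$-algebras and then to feed this into the two structural results already available. Write $D=[N_d(\mathcal{P}),hcN_d(\hat{\mathcal{E}})]$ for the dendroidal set of weak $\mathcal{P}$-algebras. By hypothesis $N_d(\mathcal{P})$ is normal and $hcN_d(\hat{\mathcal{E}})$ is an $\infty$-operad, so these are precisely the hypotheses of Theorem \ref{thm:exp}, which then tells us that $D$ is itself an $\infty$-operad. By the proposition asserting that $i^{*}$ sends $\infty$-operads to $\infty$-categories, $i^{*}(D)$ is an $\infty$-category. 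Everything therefore reduces to the identification $wAlg[\mathcal{P},\mathcal{E}]\cong i^{*}(D)$.

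To establish this identification I would compare $n$-simplices. Unwinding the definitions, $i^{*}(D)_n=D_{L_n}=dSet(N_d(\mathcal{P})\otimes\Omega[L_n],hcN_d(\hat{\mathcal{E}}))$. Letting $w_!\dashv hcN_d$ denote the adjunction defining the homotopy coherent nerve, so that $w_!$ is determined on representables by $w_!(\Omega[T])=W(\Omega(T))$, this set is $Ope(\mathcal{E})(w_!(N_d(\mathcal{P})\otimes\Omega[L_n]),\hat{\mathcal{E}})$. On the other side, Definition \ref{def:SimpSetOfWeakAlg} gives $wAlg[\mathcal{P},\mathcal{E}]_n=Ope(\mathcal{E})(W(\mathcal{P}\otimes[n]),\hat{\mathcal{E}})$. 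The identification thus amounts to the natural isomorphism
\[
w_!\bigl(N_d(\mathcal{P})\otimes\Omega[L_n]\bigr)\cong W(\mathcal{P}\otimes[n]),
\]
which is exactly the $L_n$-level extension of the $L_0$- and $L_1$-level correspondences recorded just after the definition of $D$.

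This isomorphism is where the real work lies, and I expect it to be the main obstacle. Since $\Omega[L_n]=N_d([n])$, Proposition \ref{pro:tau of tensor} already yields $\tau_d(N_d(\mathcal{P})\otimes\Omega[L_n])\cong\mathcal{P}\otimes[n]$ at the operadic level; the difficulty is that one cannot simply push this through $W$, because $w_!$ is not $W$ applied to the fundamental operad---the $W$-construction fails to preserve colimits and, as emphasized in the remark following the main diagram theorem, $N_d$ is not monoidal. I would therefore compute $w_!$ directly from its coend presentation $w_!(X)=\int^{T}X_T\cdot W(\Omega(T))$, using the generators-and-relations description of the Boardman--Vogt tensor product together with the tree-combinatorial description of the non-degenerate dendrices of $N_d(\mathcal{P})\otimes\Omega[L_n]$, and verify that the resulting colimit of $W$-constructions reassembles the Boardman--Vogt resolution of $\mathcal{P}\otimes[n]$. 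Naturality of the isomorphism in $[n]$, which transports the cosimplicial structure of $\{\mathcal{P}\otimes[n]\}$ to the simplicial structure of $i^{*}(D)$, is then routine. With the isomorphism in hand the theorem follows from the two-step argument above, and specializes, for $\mathcal{P}=As$ and $\mathcal{E}=Top$, to Boardman and Vogt's original restricted Kan condition for $wAlg[As,Top]$.
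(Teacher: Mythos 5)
Your proposal is correct and follows essentially the same route as the paper, whose entire proof is the observation that $i^{*}([N_{d}(\mathcal{P}),hcN_{d}(\hat{\mathcal{E}})])\cong wAlg[\mathcal{P},\mathcal{E}]$ combined with Theorem \ref{thm:exp} and the fact that $i^{*}$ sends $\infty$-operads to $\infty$-categories. The only difference is that you go further than the paper by outlining, via the adjunction $w_{!}\dashv hcN_{d}$ and the comparison $w_{!}(N_{d}(\mathcal{P})\otimes\Omega[L_{n}])\cong W(\mathcal{P}\otimes[n])$, how to actually verify the level-$n$ identification that the paper merely asserts (and records only for $L_{0}$ and $L_{1}$).
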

\begin{proof}
The proof follows by noticing that $i^{*}([N_{d}(\mathcal{P}),hcN_{d}(\hat{\mathcal{E}})])\cong wAlg[\mathcal{P},\mathcal{E}]$.
\end{proof}
As we have seen above, local fibrancy of $\hat{\mathcal{E}}$ assures
that $hcN_{d}(\hat{\mathcal{E}})$ is an $\infty$-operad. See below
for a condition on $\mathcal{P}$ sufficient to assure $N_{d}(\mathcal{P})$
is normal. We now turn to the Cisinski-Moerdijk model structure. 
\begin{thm}
The category $dSet$ of dendroidal sets admits a Quillen model structure
where the cofibrations are the normal monomorphisms, the fibrant objects
are the $\infty$-operads, and the fibrations between $\infty$-operads
are the inner Kan fibrations whose image under $\tau_{d}$ is an operadic
fibration. The class $\mathcal{W}$ of weak equivalences can be characterized
as the smallest class of arrows which contains all inner anodyne extensions,
all trivial fibrations between $\infty$-operads and satisfies the
2 out of 3 property. Furthermore, with the tensor product of dendroidal
sets, this model structure is a monoidal model category. Slicing this
model structure recovers the Joyal model structure on $sSet$ and
in the diagram\[
\xyC{25pt}\xyR{25pt}\xymatrix{Cat\ar@<2pt>[r]^{j_{!}\,\,\,}\ar@<2pt>[d]^{N} & Ope\ar@<2pt>[l]^{j^{*}\,\,\,}\ar@<2pt>[d]^{N_{d}}\\
sSet\ar@<2pt>[r]^{i_{!}}\ar@<2pt>[u]^{\tau} & dSet\ar@<2pt>[l]^{i^{*}}\ar@<2pt>[u]^{\tau_{d}}}
\]
where the categories are endowed (respectively starting from the top-left
going clock-wise) with the categorical, operadic, Cisinski-Moerdijk,
and Joyal model structures all adjunctions are Quillen adjunction
(and none is a Quillen equivalence). \end{thm}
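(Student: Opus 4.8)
The plan is to follow the strategy Joyal used for quasicategories, adapted both to the dendroidal setting and to the fact that the cofibrations are the \emph{normal} monomorphisms rather than all monomorphisms. Since $dSet=Set_{\Omega}$ is a presheaf category it is locally presentable, so the natural tool is a recognition theorem of Jeff~Smith type: exhibit a set $I$ of generating cofibrations together with an accessible class $\mathcal{W}$ of weak equivalences satisfying $2$-out-of-$3$, such that every map with the right lifting property against $I$ lies in $\mathcal{W}$ and such that $\mathcal{W}\cap\mathrm{cof}(I)$ is closed under pushout and transfinite composition; these data then assemble into a cofibrantly generated model structure. First I would take $I$ to be the set of boundary inclusions $\partial\Omega[T]\hookrightarrow\Omega[T]$ and verify, using the description of normality in terms of tree automorphisms fixing a dendrex, that the saturation of $I$ is exactly the class of normal monomorphisms. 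Here the symmetric group actions on trees are precisely what obstruct recovering \emph{all} monomorphisms and force the restriction to normal ones, which is why Cisinski's general machinery for presheaf model structures does not apply off the shelf.

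The technical engine is the compatibility of the tensor product with normal monomorphisms and inner anodyne extensions. Taking the inner anodyne extensions to be the saturation of the inner horn inclusions $\Lambda^{e}[T]\hookrightarrow\Omega[T]$, the key input is a pushout-product (Leibniz) theorem: the pushout-product of two normal monomorphisms is again normal, and the pushout-product of a normal monomorphism with an inner anodyne extension is inner anodyne. This is the dual incarnation of Theorem~\ref{thm:exp}, and its proof rests on the combinatorial analysis of the non-degenerate dendrices of $\Omega[S]\otimes\Omega[T]$ via the poset of percolation trees illustrated in the proof of that theorem. I expect this to be the main obstacle: unlike the simplicial case, where the non-degenerate simplices of $\Delta[n]\times\Delta[k]$ admit a transparent description, the percolation-tree analysis is genuinely intricate, and it is exactly this step that prevents a direct appeal to the techniques of a Cisinski model structure.

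With this engine in hand I would define $\mathcal{W}$ homotopically. Choosing the interval $J=N_{d}(0\leftrightarrows1)$ and using the pushout-product theorem to see that $[X,Y]$ is an $\infty$-operad whenever $X$ is normal and $Y$ is an $\infty$-operad (Theorem~\ref{thm:exp}), one obtains a well-behaved notion of homotopy between maps into $\infty$-operads. A map $f\colon X\to Y$ is then declared a weak equivalence when, for every $\infty$-operad $Z$, it induces a bijection on homotopy classes of maps $[Y,Z]\to[X,Z]$, equivalently a homotopy equivalence of the associated $\infty$-operads after normalization. One checks that $\mathcal{W}$ is accessible and satisfies $2$-out-of-$3$, that inner anodyne extensions and trivial fibrations between $\infty$-operads lie in $\mathcal{W}$, and that $\mathcal{W}$ intersected with the normal monomorphisms is saturated; Smith's theorem then yields the model structure with fibrant objects exactly the $\infty$-operads. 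The stated minimal characterization of $\mathcal{W}$ drops out of this description, and the identification of fibrations between $\infty$-operads as inner Kan fibrations whose image under $\tau_{d}$ is an operadic fibration follows by the argument Joyal uses for quasicategories, lifting the isofibration condition through the adjunction $\tau_{d}\dashv N_{d}$.

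It remains to address the monoidal, slicing, and comparison claims. The monoidal model axiom --- that the pushout-product of two cofibrations is a cofibration, trivial if either factor is --- follows from the pushout-product theorem of the second paragraph for the cofibration part, while the triviality part reduces to the statement that pushout-product with an inner anodyne stays inner anodyne, hence a weak equivalence. For the slicing claim I would use the equivalence $dSet/\star\cong sSet$ and check that under it normal monomorphisms, $\infty$-operads, and weak equivalences restrict respectively to monomorphisms, quasicategories, and Joyal weak equivalences, using that inner horns in linear trees coincide with inner horns in simplices and that $i^{*}$ carries $\infty$-operads to $\infty$-categories. Finally, for the square of adjunctions I would verify in each case that the left adjoint preserves normal monomorphisms and weak equivalences, invoking the monoidality relations and the commutativities of the earlier adjunction theorem; that none is a Quillen equivalence follows because, for instance, not every $\infty$-operad is weakly equivalent to the nerve of a strict operad (compare Lemma~\ref{lem:strict iff operad}), so $N_{d}$ cannot induce an equivalence of homotopy categories, and similarly the categorical and Joyal structures on $sSet$ are visibly distinct.
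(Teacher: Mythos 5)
The first thing to note is that the paper does not prove this theorem at all: its ``proof'' is the single sentence deferring to the Cisinski--Moerdijk paper \cite{dSet model hom op}, so your proposal has to be measured against that reference rather than against anything in the text. Measured that way, your outline does track the actual strategy: the boundary inclusions $\partial\Omega[T]\hookrightarrow\Omega[T]$ generate the normal monomorphisms, the pushout-product compatibility of normal monomorphisms with inner anodyne extensions (the percolation-tree combinatorics behind Theorem \ref{thm:exp}) is the technical engine, the interval is $N_{d}(0\leftrightarrows1)$, and the weak equivalences are defined by inducing bijections on homotopy classes of maps into $\infty$-operads after normalization. Packaging the verification through Smith's recognition theorem is your own choice (the cited proof argues more in the direct style of Joyal's proof for quasicategories and of Cisinski's theory), but that substitution is not where the trouble lies.

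The genuine gap is the step you dispatch with ``Smith's theorem then yields the model structure with fibrant objects exactly the $\infty$-operads.'' It does not, because the trivial cofibrations of this model structure strictly contain the inner anodyne extensions: the map $\Omega[\eta]\to N_{d}(0\leftrightarrows1)$ is a normal monomorphism and a weak equivalence, but it is not inner anodyne, since every inner horn inclusion $\Lambda^{e}[T]\to\Omega[T]$ (and hence every inner anodyne map) is bijective on $\eta$-dendrices, while this map is not. Consequently the pushout-product engine, which controls only inner anodynes, verifies neither Smith's hypothesis that $\mathcal{W}\cap\mathrm{cof}(I)$ is closed under pushout and transfinite composition, nor the claim that an arbitrary $\infty$-operad has the right lifting property against all trivial cofibrations, nor the stated characterization of fibrations between $\infty$-operads. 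Closing exactly this gap is the hard core of the Cisinski--Moerdijk proof: one needs the theory of equivalences inside an $\infty$-operad, i.e.\ the dendroidal analogue of Joyal's lifting theorem (filling of outer horns whose distinguished unary dendrex is an equivalence, equivalently good lifting behaviour against $\Omega[\eta]\to N_{d}(0\leftrightarrows1)$), and your sketch never mentions it. A second, lesser, omission is that the accessibility of $\mathcal{W}$, which Smith's theorem requires, is asserted rather than argued; for a class defined by mapping into all fibrant objects this is a real point, not a formality.
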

\begin{proof}
The proof of the model structure is quite intricate and is established
in \cite{dSet model hom op}. 
\end{proof}

\subsection{Homotopy invariance property for algebras in $dSet$}

As a consequence of the Cisinski-Moerdijk model structure on dendroidal
sets we obtain the following. 
\begin{thm}
Let $X$ be a normal dendroidal set and $E$ an $\infty$-operad.
Then $X$-algebras in $E$ have the homotopy invariance property. \end{thm}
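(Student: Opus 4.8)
The plan is to recognise the lifting problem of Definition~\ref{def:homotop inv proper} as a direct instance of the lifting axiom of the Cisinski--Moerdijk model structure on $dSet$: I will show that the left-hand vertical map $\Omega[\eta]\to N_d(0\leftrightarrows 1)$ is a trivial cofibration and that the right-hand vertical map $[X,E]\to[X_\eta,E]$ is a fibration, so that the filler $\alpha$ exists because trivial cofibrations lift against fibrations. This is the dendroidal analogue of the argument behind Fact~\ref{fac:iso invar}, now carried out in $dSet$ rather than in $Ope$.

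First I would analyse the left-hand map. Under the identifications $\Omega[\eta]\cong i_!(\Delta[0])$ and (using $i_!N\cong N_dj_!$) $N_d(0\leftrightarrows 1)\cong i_!(N(0\leftrightarrows 1))$, this map is $i_!$ applied to the endpoint inclusion $\Delta[0]\hookrightarrow N(0\leftrightarrows 1)$. Since $0\leftrightarrows 1$ is equivalent to the terminal category and $N$ is right Quillen for the Joyal structure, $N(0\leftrightarrows 1)\to\Delta[0]$ is a weak categorical equivalence between fibrant objects, and two-out-of-three identifies the endpoint inclusion as a Joyal trivial cofibration. As $i_!$ is a left Quillen functor from the Joyal to the Cisinski--Moerdijk structure it preserves trivial cofibrations, so $\Omega[\eta]\to N_d(0\leftrightarrows 1)$ is a trivial cofibration in $dSet$.

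Next I would treat the right-hand map, where the monoidal model structure does the work. The canonical map $X_\eta\to X$ is a monomorphism, and since $X$ is normal every dendrex of $X$ is fixed only by the identity automorphism of its shape; applying this to the dendrices not in the image of $X_\eta$ shows that $X_\eta\to X$ is a \emph{normal} monomorphism, i.e.\ a cofibration. Because $E$ is an $\infty$-operad it is fibrant, so $E\to\ast$ is a fibration, where $\ast$ denotes the terminal dendroidal set. Feeding the cofibration $X_\eta\to X$ and this fibration into the pushout--product (SM7) axiom of the monoidal model structure, and using $[X,\ast]=\ast=[X_\eta,\ast]$, the resulting pullback-hom collapses to $[X,E]\to[X_\eta,E]$, which is therefore a fibration. (That its two ends are themselves $\infty$-operads also follows from Theorem~\ref{thm:exp}, since $X$ and the coproduct of representables $X_\eta$ are both normal, but this is not needed for the lift.)

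With both facts established the filler $\alpha$ exists by the model-category lifting axiom, which proves the homotopy invariance property. I do not expect any single computation to be the obstacle; rather, the delicate points are the bookkeeping that makes the abstract machinery applicable. Specifically, one must check that the normality hypothesis on $X$ is exactly what promotes the monomorphism $X_\eta\to X$ to a cofibration, and that the strong monoidal left Quillen functor $i_!$ really does transport the (standard but structure-specific) Joyal trivial cofibration $\Delta[0]\hookrightarrow N(0\leftrightarrows 1)$ to the map at hand. Both rely essentially on the intricate Cisinski--Moerdijk model structure theorem, which I would invoke as a black box.
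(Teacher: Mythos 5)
Your proposal is correct and follows exactly the paper's own (very terse) argument: the paper simply asserts that the left vertical map in Definition~\ref{def:homotop inv proper} is a trivial cofibration and the right vertical map is a fibration in the Cisinski--Moerdijk model structure, so the lift exists by the lifting axiom. Your write-up supplies precisely the verifications the paper leaves implicit --- identifying the left map as $i_!$ of a Joyal trivial cofibration and deducing the fibration claim from normality of $X$, fibrancy of $E$, and the monoidal (SM7) axiom --- so it is the same proof, carried out in full detail.
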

\begin{proof}
In the diagram defining the homotopy invariance property in Definition
\ref{def:homotop inv proper} above the left vertical arrow is a trivial
cofibration and the right vertical arrow is a fibration in the Cisinski-Moerdijk
model structure and thus the required lift exists. 
\end{proof}
We now recall Fact \ref{fac:iso invar} regarding the internalization
of strict algebras by the closed monoidal structure on $Ope$ given
by the Boardman-Vogt tensor product and the isomorphism invariance
property for such algebras as captured by the operadic monoidal model
structure on $Ope$. We recall that a similar such correspondence
for weak algebras and their homotopy invariance property does not
seem possible within the confines of enriched operads. We are now
in a position to summarize the results recounted above in a form completely
analogous to the situation of strict algebras. 
\begin{fact}
The notion of algebras of dendroidal sets is internalized to the category
$dSet$ by it being closed monoidal with respect to the tensor product
of dendroidal sets. The homotopy invariance property of $X$-algebras
in an $\infty$-operad, for a normal $X$, holds and is captured by
the fact that $dSet$ supports the Cisinski-Moerdijk model structure
which is compatible with the tensor product. The notion of a weak
$\mathcal{P}$-algebra in a homotopy environment $\mathcal{E}$, where
$\mathcal{P}$ is discrete, is subsumed by the notion of algebras
in $dSet$ by means of the dendroidal set $[N_{d}(\mathcal{P}),hcN_{d}(\mathcal{E})]$
of weak $\mathcal{P}$-algebras in $\mathcal{E}$. 
\end{fact}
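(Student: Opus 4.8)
The plan is to assemble this Fact from results already established, since it is a summary statement collating the dendroidal theory into a form parallel to Fact \ref{fac:iso invar}. First I would dispatch the internalization claim by invoking the theorem that $(dSet,\otimes,\star)$ is closed monoidal with internal hom $[X,Y]_{T}=dSet(X\otimes\Omega[T],Y)$, together with the \emph{definition} that $[X,E]$ is the dendroidal set of $X$-algebras in $E$, whose $\Omega[L_{0}]$-dendrices are precisely the $X$-algebras and whose $\Omega[L_{1}]$-dendrices are their maps. The compatibility with the operadic picture is then the isomorphism $N_{d}([\mathcal{P},\mathcal{E}])\cong[N_{d}(\mathcal{P}),N_{d}(\mathcal{E})]$ established above, showing that the dendroidal nerve carries the operadic internalization of algebras to the dendroidal one.

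Second, for the homotopy invariance assertion I would cite the theorem proved immediately above: for a normal dendroidal set $X$ and an $\infty$-operad $E$, the $X$-algebras in $E$ have the homotopy invariance property of Definition \ref{def:homotop inv proper}. The point worth stressing in the proof is \emph{why} the lift exists: in the defining square the left vertical map $\Omega[\eta]\to N_{d}(0\leftrightarrows1)$ is a trivial cofibration and the right vertical map $[X,E]\to[X_{\eta},E]$ is a fibration in the Cisinski--Moerdijk model structure, so the diagonal filler $\alpha$ is produced by the lifting property of the model structure. This is exactly the analogue of the argument in Fact \ref{fac:iso invar}, now with weak equivalences in place of isomorphisms.

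Third, for the subsumption of weak algebras I would unwind the definition of the dendroidal set $[N_{d}(\mathcal{P}),hcN_{d}(\hat{\mathcal{E}})]$. Using the explicit formula for the homotopy coherent nerve and for the internal hom, one checks that its $\Omega[L_{0}]$-dendrices correspond to enriched operad maps $W(\mathcal{P})\to\hat{\mathcal{E}}$, i.e.\ to weak $\mathcal{P}$-algebras, while its $\Omega[L_{1}]$-dendrices correspond to maps $W(\mathcal{P}\otimes[1])\to\hat{\mathcal{E}}$, i.e.\ to weak maps of such algebras. This is precisely the identification recorded after the definition of the dendroidal set of weak $\mathcal{P}$-algebras, and it is what lets the earlier construction of $wAlg[\mathcal{P},\mathcal{E}]$ be recovered as $i^{*}[N_{d}(\mathcal{P}),hcN_{d}(\hat{\mathcal{E}})]$.

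The only step requiring genuine verification — the rest being a matter of quoting definitions — is the monoidal input behind the second assertion: one must use that the Cisinski--Moerdijk structure is a \emph{monoidal} model category, so that the pushout-product (equivalently pullback-hom) axiom guarantees $[X,E]\to[X_{\eta},E]$ is a fibration whenever $X_{\eta}\to X$ is a normal monomorphism and $E$ is fibrant. I expect this to be the main obstacle in the sense that it is the one nontrivial hypothesis the summary silently relies on; however, since the relevant theorem already asserts that this model structure is monoidal with respect to the tensor product of dendroidal sets, the obstacle is discharged by direct appeal to that theorem rather than by fresh computation.
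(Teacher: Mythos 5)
Your proposal is correct and matches the paper's treatment: this Fact is stated as a summary with no separate proof, resting exactly on the three ingredients you cite — the closed monoidal structure on $dSet$ with the definition of $X$-algebras as $[X,E]$, the preceding theorem whose proof is the trivial-cofibration/fibration lifting argument in the Cisinski--Moerdijk model structure, and the identification of the $L_{0}$- and $L_{1}$-dendrices of $[N_{d}(\mathcal{P}),hcN_{d}(\hat{\mathcal{E}})]$ with maps $W(\mathcal{P})\to\hat{\mathcal{E}}$ and $W(\mathcal{P}\otimes[1])\to\hat{\mathcal{E}}$ respectively. Your added remark that the fibration claim is secured by the monoidal model category axiom is a faithful elaboration of what the paper relies on implicitly.
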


\subsection{\label{sub:Revisiting-applications}Revisiting applications}

Recall the iterative construction of the dendroidal set $nA_{\infty}$
of $n$-fold $A_{\infty}$-spaces and $_{n}Cat$ of weak $n$-categories
as well as $wCat_{k}^{n}$ of $k$-monoidal $n$-categories. It follows
from the general theory of dendroidal sets that these are all $\infty$-operads.
To see that, one uses Theorem \ref{thm:exp} together with the fact
that for a $\Sigma$-free symmetric operad $\mathcal{P}$ (for instance
if $\mathcal{P}$ is obtained from a planar operad $\mathcal{Q}$
by the symmetrization functor) then $N_{d}(\mathcal{P})$ is normal.
Thus, weak maps of $n$-fold $A_{\infty}$-spaces can be coherently
composed and similarly so can weak functors between weak $n$-categories.
As for $k$-monoidal $n$-categories we note that $wCat_{k}^{n}$
is, for similar reasons as above, an $\infty$-operad as well. 

We may now reduce the Baez-Breen-Dolan stabilization conjecture as
follows.
\begin{prop}
If for any $n\ge0$ the dendroidal set $wCat_{n}^{n}$ is a strict
$\infty$-operad then the Baez-Breen-Dolan stabilization conjecture
is true. \end{prop}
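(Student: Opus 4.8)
The plan is to use the strictness hypothesis to transport the entire iteration out of $dSet$, where the non-monoidality of $N_{d}$ obstructs any stabilization, and into the closed monoidal category $Ope$, where the Eckmann-Hilton phenomenon $As\otimes As\cong Comm$ forces the tower of classifying operads to become constant. Fix $n\ge0$ and assume $wCat_{n}^{n}$ is a strict $\infty$-operad. By Lemma \ref{lem:strict iff operad} there is then a symmetric operad $\mathcal{P}$ with $wCat_{n}^{n}\cong N_{d}(\mathcal{P})$.

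First I would show, by induction on $j\ge0$, that each higher stage is again a dendroidal nerve, namely
\[
wCat_{n+j}^{n}\cong N_{d}([As^{\otimes j},\mathcal{P}]),
\]
where $As^{\otimes 0}=\star$ is the monoidal unit of $Ope$, so the case $j=0$ is precisely the hypothesis (using $[\star,\mathcal{P}]\cong\mathcal{P}$). For the inductive step one combines the recursive definition $wCat_{n+j}^{n}=[A,wCat_{n+j-1}^{n}]$ with $A=N_{d}(As)$, the lemma that $N_{d}$ commutes with internal homs, and the closed monoidal adjunction $[X,[Y,Z]]\cong[X\otimes Y,Z]$ in $Ope$:
\[
wCat_{n+j}^{n}\cong[N_{d}(As),N_{d}([As^{\otimes(j-1)},\mathcal{P}])]\cong N_{d}([As,[As^{\otimes(j-1)},\mathcal{P}]])\cong N_{d}([As^{\otimes j},\mathcal{P}]).
\]
The decisive point is that applying $[A,-]=[N_{d}(As),-]$ to a nerve lands in the nerve of the \emph{operadic} internal hom, so that the $dSet$-tensor power $N_{d}(As)^{\otimes j}$ never appears; instead the computation takes place with the Boardman-Vogt tensor power $As^{\otimes j}$ inside $Ope$. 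This is exactly where strictness is indispensable, since $N_{d}$ is not monoidal and $N_{d}(As)^{\otimes j}$ does not agree with $N_{d}(As^{\otimes j})$, which is why the tower fails to stabilize in the absence of the hypothesis.

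Next I would invoke the Eckmann-Hilton computation to collapse the tail of the tower. From the stated isomorphism $As\otimes As\cong Comm$ together with the fact that $Comm$ absorbs $As$ under the tensor product, i.e. $As\otimes Comm\cong Comm$, an immediate induction gives $As^{\otimes j}\cong Comm$ for every $j\ge2$. Hence $[As^{\otimes j},\mathcal{P}]\cong[Comm,\mathcal{P}]$ for all $j\ge2$, and applying $N_{d}$ to the display above yields $wCat_{n+j}^{n}\cong N_{d}([Comm,\mathcal{P}])$ for every $j\ge2$. Rewriting $k=n+j$, this says $wCat_{k}^{n}\cong N_{d}([Comm,\mathcal{P}])\cong wCat_{n+2}^{n}$ for all $k\ge n+2$, which is precisely the content of Conjecture \ref{con:(The-Baez-Breen-Dolan-stabilization}.

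I expect the only genuine obstacle to be operad-theoretic rather than dendroidal: establishing $As\otimes Comm\cong Comm$, i.e. that $Comm$ absorbs $As$ under the Boardman-Vogt tensor product. Conceptually this is the statement that a monoid compatibly interchanging with a commutative monoid collapses to a single commutative monoid, so the unique operad map $As\otimes Comm\to Comm$ induced by terminality ought to be invertible; making this rigorous requires unwinding the generators-and-relations presentation of $\otimes$ and checking that the type-$5$ interchange relations identify all the new operations, rather than merely exhibiting a weak equivalence. Everything else in the argument is a formal consequence of the closed monoidal structures on $Ope$ and $dSet$ and of $N_{d}$ commuting with internal homs, so once this absorption identity is secured the proposition follows.
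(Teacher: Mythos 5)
Your argument is correct and follows the same overall strategy as the paper---use strictness and Lemma \ref{lem:strict iff operad} to write $wCat_{n}^{n}\cong N_{d}(\mathcal{P})$, transport the computation into $Ope$, and collapse the tower with the Eckmann-Hilton isomorphism $As^{\otimes j}\cong Comm$---but the transport step is implemented with a different key lemma. The paper forms the dendroidal tensor powers directly, writing $wCat_{n+j}^{n}\cong[A^{\otimes j},N_{d}(\mathcal{P})]$, converts this to $N_{d}([\tau_{d}(A^{\otimes j}),\mathcal{P}])$ by the adjunction $\tau_{d}\dashv N_{d}$ (a step that also silently uses that $\tau_{d}$ is strong monoidal), and then invokes Proposition \ref{pro:tau of tensor} to identify $\tau_{d}(A^{\otimes j})\cong As^{\otimes j}$. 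You never form $A^{\otimes j}$ in $dSet$ at all: instead you induct on $j$, at each stage applying the lemma that $N_{d}$ commutes with internal homs together with the closed monoidal adjunction in $Ope$. The two routes are formally equivalent, but yours leans only on compatibilities the paper states explicitly, which is slightly cleaner relative to its own toolkit; the paper's version is shorter because it does the whole reduction in one shot. You are also more careful on the one point both arguments leave open: the claim $As^{\otimes j}\cong Comm$ for all $j\ge2$ does not follow formally from $As\otimes As\cong Comm$ alone (associativity of $\otimes$ only gives $As^{\otimes 3}\cong As\otimes Comm$), but needs the absorption identity $As\otimes Comm\cong Comm$ or a direct generators-and-relations computation, exactly as you flag; the paper's proof passes over this with ``and the result follows.''
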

\begin{proof}
Recall that $wCat_{n}^{n}=[A^{\otimes n},wCat^{n}]$ and assume it
is a strict $\infty$-operad. We wish to prove that $[A^{\otimes n+j},wCat^{n}]\cong[A^{\otimes n+2},wCat^{n}]$
for any fixed $j>2$. By Lemma \ref{lem:strict iff operad} there
is an operad $\mathcal{P}$ such that $[A^{\otimes n},wCat^{n}]=N_{d}(\mathcal{P})$.
We now have\[
[A^{\otimes n+j},wCat^{n}]=[A^{\otimes j},[A^{\otimes n},wCat^{n}]]=[A^{\otimes j},N_{d}(\mathcal{P})]\]
which by adjunction is isomorphic to $N_{d}([\tau_{d}(A^{\otimes j}),\mathcal{P}]).$
However, $A$ is actually the dendroidal nerve of the symmetric operad
$As$ classifying associative monoids. By Proposition \ref{pro:tau of tensor}
we have\[
\tau_{d}(A^{\otimes j})=\tau_{d}(N_{d}(As)^{\otimes j})\cong As^{\otimes j}\cong Comm.\]
and the result follows. 
\end{proof}

\section{Dendroidal sets - combinatorial models of unknown spaces}

All of the theory of dendroidal sets that directly or indirectly is
concerned with algebras (we include the Cisinski-Moerdijk model structure
here as well) is very operadic in nature and is closely related to
the theory of $\infty$-categories modeled by quasicategories. Simplicial
sets are, however, also models for topological homotopy theory. Indeed,
simplicial sets were introduced in the context of algebraic topology
as combinatorial models of topological spaces. The appropriate equivalence
is established in \cite{Quillen hom alg} and was the reason to introduce
Quillen model categories. To recall the main result recall the singular
functor $Sing:Top\to sSet$ and its left adjoint $|-|:sSet\to Top$
given by geometric realization. 
\begin{thm}
The category $Top$ supports a Quillen model structure in which the
weak equivalences are the weak homotopy equivalences and the fibrations
are the Serre fibrations. The category $sSet$ supports a Quillen
model structure in which the weak equivalences are those maps $f:X\to Y$
for which the geometric realization $|f|$ is a homotopy weak equivalence
and the fibrations are the Kan fibrations. With these model structures
the adjunction above is a Quillen equivalence. 
\end{thm}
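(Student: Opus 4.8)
The plan is to realize both model structures as cofibrantly generated model categories and then verify the Quillen equivalence by a standard unit/counit argument, with the genuinely nontrivial input concentrated entirely in the homotopy theory of Kan fibrations.

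First I would set up the two structures via Quillen's recognition theorem for cofibrantly generated model categories. On $Top$ I take the generating cofibrations to be the sphere inclusions $S^{n-1}\hookrightarrow D^{n}$ and the generating trivial cofibrations the end inclusions $D^{n}\hookrightarrow D^{n}\times[0,1]$; both categories are complete and cocomplete, the relevant domains are compact hence small, and the $J$-cell complexes are deformation retracts and therefore weak homotopy equivalences. A map then has the right lifting property against $J$ precisely when it is a Serre fibration, and against $I$ precisely when it is a trivial Serre fibration, which supplies the factorizations and hence the model structure. On $sSet$ I take $I=\{\partial\Delta[n]\hookrightarrow\Delta[n]\}$ and $J=\{\Lambda^{k}[n]\hookrightarrow\Delta[n]\}$, so that the cofibrations are exactly the monomorphisms, the fibrations are the Kan fibrations, and the weak equivalences are, by definition, those $f$ for which $|{-}|$ applied to $f$ is a weak homotopy equivalence.

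The crux of the $sSet$ structure is to identify the trivial cofibrations with the anodyne extensions. One inclusion is soft: since $|{-}|$ is a left adjoint it preserves pushouts and transfinite compositions and sends each horn inclusion to a deformation retract $|\Lambda^{k}[n]|\hookrightarrow|\Delta[n]|$, which is a trivial cofibration in $Top$; as these are closed under the cellular operations, $|{-}|$ carries every anodyne extension to a weak homotopy equivalence, so anodyne extensions are weak equivalences. The reverse inclusion is the hard direction: given a monomorphism $f$ that is a weak equivalence, I factor it as an anodyne map followed by a Kan fibration, conclude by the 2-out-of-3 property that the fibration is a trivial Kan fibration, and then exhibit $f$ as a retract of the anodyne map---provided I know that a trivial Kan fibration has the right lifting property against all monomorphisms. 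For the adjunction itself, that $|{-}|\dashv Sing$ is a Quillen pair is almost definitional, since $|{-}|$ sends generating cofibrations to sphere inclusions and generating anodyne maps to trivial cofibrations in $Top$, so dually $Sing$ carries Serre fibrations to Kan fibrations. To upgrade this to a Quillen equivalence I would invoke the classical fact that the counit $|Sing\,Y|\to Y$ is a weak homotopy equivalence for every space $Y$: because every object of $sSet$ is cofibrant and every object of $Top$ is fibrant the derived counit is the counit itself, and because the weak equivalences of $sSet$ are created by $|{-}|$, naturality of the counit then shows $Sing$ reflects weak equivalences, and the two facts together yield the equivalence.

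The main obstacle, and the only truly deep ingredient, is the assertion that a Kan fibration which is a weak equivalence has the right lifting property against all monomorphisms---equivalently, that $|{-}|$ sends Kan fibrations to Serre fibrations. I expect to prove this through the theory of \emph{minimal fibrations}: every Kan fibration is fibrewise deformation-equivalent to a minimal one, minimal fibrations realize to fibre bundles and hence to Serre fibrations, and the weak-equivalence hypothesis then forces the fibres to be contractible. This argument, together with the point (flagged in the opening remark, and requiring a convenient category of spaces) that $|{-}|$ preserves finite products, constitutes the technical heart; everything else is the formal bookkeeping sketched above. The whole package is of course due to Quillen, and I would ultimately refer to \cite{Quillen hom alg} for the remaining details.
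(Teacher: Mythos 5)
The paper offers no proof of this statement at all: it is quoted purely as classical background --- Quillen's theorem --- and the surrounding text simply points to \cite{Quillen hom alg}, where the equivalence was first established and which motivated the invention of model categories. Your sketch is therefore not competing with an argument in the paper; it is an outline of the classical proof itself, and it is the standard one: cofibrant generation on both sides, identification of the trivial cofibrations in $sSet$ with anodyne extensions via the factor-and-retract trick, minimal fibrations as the engine behind the lifting property of trivial Kan fibrations (and behind $|-|$ sending Kan fibrations to Serre fibrations), and the weak equivalence $|Sing\, Y|\to Y$ to upgrade the Quillen pair to a Quillen equivalence. The outline is correct, but three glosses are worth naming if you were to write it in full: (i) compact spaces are \emph{not} small in $Top$ with respect to arbitrary maps, only with respect to transfinite compositions of closed $T_1$ inclusions (or relative cell complexes), so ``compact hence small'' must be taken in that restricted sense --- this is precisely the kind of point-set care that forces one into a convenient category of spaces; (ii) the claim that maps with the right lifting property against the sphere inclusions are exactly the trivial Serre fibrations is itself a theorem requiring proof, not a definition; (iii) your ``equivalently'' connecting the lifting property of trivial Kan fibrations with realization of Kan fibrations being Serre fibrations is heuristic rather than formal --- both facts are corollaries of minimal fibration theory, but neither implies the other without further argument --- and the weak equivalence of the counit is likewise an independent, nontrivial input resting on the same machinery. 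None of this undermines your outline; it just marks exactly where the real work that the paper delegates to Quillen lives.
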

Simplicial sets thus support a topologically flavoured Quillen model
structure as well as the Joyal model structure which is categorically
flavoured and so simplicial sets play two rather different roles.
The close connection between dendroidal sets and simplicial sets raises
the question as to the existence of a topologically flavoured interpretation
of dendroidal sets as well. This problem is open for debate and interpretation
and is certainly far from settled.

We remark first that there is some indication that suggests dendroidal
sets do carry topological meaning. Recall the Dold-Kan correspondence
that establishes an equivalence of categories between the category
$sAb$ of simplicial abelian groups and the category $Ch$ of non-negatively
graded chain complexes. This correspondence is useful in the calculation
of homotopy groups of simplicial sets and in the definition of Eilenberg-Mac
Lane spaces $K(G,n)$ for $n>1$. In \cite{den Dold-Kan} it is shown
that there is a planar dendroidal version (where one considers a planar
version of $\Omega$ whose objects are planar trees) of the Dold-Kan
correspondence. The equivalence is between the category $dAb$ of
planar dendroidal abelian groups and the category $dCh$ of planar
dendroidal chain complexes. The definition of the latter requires
that for each face map $\partial_{\alpha}$ between planar trees there
is associated a sign $sgn(\partial_{\alpha})\in\{\pm1\}$ such that
the following holds. In the planar version of $\Omega$ it is still
true that a face $S\to T$ of codimension $2$ decomposes in precisely
two ways as the composition of two faces (see Proposition \ref{pro:Sub-faceOfASub-Face}
above). Thus we can write $S\to T$ as $\partial_{\alpha}\circ\partial_{\beta}$
as well as $\partial_{\gamma}\circ\partial_{\delta}$ and we require
that $sgn(\partial_{\alpha})\cdot sgn(\partial_{\beta})=-sgn(\partial_{\gamma})\cdot sgn(\partial_{\delta})$.
One may now wonder whether these dendroidal chain complexes give rise
to some sort of generalized Eilenberg-Mac Lane spaces. A first step
towards answering this question should be a clearer specification
of goals in a broad context, which is the aim of the rest of this section.

As inspiration we consider the Quillen equivalence between topological
spaces and simplicial sets mentioned above. The geometric realization
plays there a prominent role and thus a significant aspect of understanding
the homotopy behind dendroidal sets is to find a category $dTop$ together
with functors $Sing_{d}:dTop\to dSet$ and $|-|_{d}:dSet\to dTop$. The
category $dTop$ of course has to be chosen with care so that it will
rightfully be considered to be related to topology. We thus expect
that there is a fully faithful functor $h_{!}:Top\to dTop$ with a right
adjoint $h^{*}:dTop\to Top$ that should be defined 'purely topologically'. We
thus expect $dTop$ to be a category of some generalized topological
spaces in which ordinary topological spaces embed via $h_{!}$. To
allow sufficient flexibility for working with these objects we expect
that $dTop$ be small complete and small cocomplete. Moreover, the functor
$|-|_{d}:dSet\to dTop$ should send a dendroidal set $X$ to some generalized
space $|X|_{d}$ in such a way that the combinatorial information
in $X$ is not lost. We thus expect of any such functor $|-|_{d}$
that if for some $f:X\to Y$ the map $|f|_{d}$ is an isomorphism
then $f$ was already an isomorphism. In other words we expect $|-|_{d}$
to be conservative. 

The term 'purely topologically' above is of course vague and open
to discussion. In an attempt to formalize it recall the various slicing
lemmas we have seen above: Slicing symmetric operads over $\star$
gives categories, slicing dendroidal sets over $\Omega[\eta]=N_{d}(\star)$
gives simplicial sets, and slicing $\Omega$ over $\eta$ gives $\Delta$.
We thus expect that there is an object $\star\in dTop$ such that slicing
$dTop$ over $\star$ gives a category equivalent to $Top$ and that in fact the embedding $h_{!}:Top\to dTop$
is essentially the forgetful functor $dTop/\star\to dTop$. Moreover, noting
that the `correct' tensor product of dendroidal sets is not the cartesian
one we expect $dTop$ to posses a monoidal structure different from the
cartesian product. And, just as the tensor product of dendroidal
sets slices to the cartesian product of simplicial sets we expect
the monoidal structure on $dTop$ to slice to the cartesian product of
topological spaces. Lastly, an important property of the ordinary
geometric realization functor is that it commutes with finite products. We
expect of the dendroidal geometric realization functor $dSet\to dTop$
to be monoidal with respect to the non-cartesian monoidal structure on each category. 

We summarize our expectations in the following formulation. 
\begin{problem}
Find a category $dTop$ together with a functor $Sing_{d}:dTop\to dSet$,
a left adjoint $|-|_{d}:dSet\to dTop$, and an object $\star\in dTop_{0}$
such that:\end{problem}
\begin{enumerate}
\item $dTop$ is small complete and small cocomplete. 
\item (Slicing lemma) $dTop/\star$ is equivalent to $Top$.
\item The forgetful functor $h_{!}:Top\to dTop$ is an embedding. 
\item Slicing $Sing_{d}$ gives $Sing$ and slicing $|-|_{d}$ gives $|-|$. 
\item $|-|_{d}$ is conservative. 
\item $dTop$ admits a non-cartesian monoidal structure that slices over $\star$
to the cartesian product in $Top$ (along $h_{!}$).
\item The functor $|-|_{d}$ is to be a monoidal functor with respect to
the tensor structures on $dSet$ and $dTop$.
\end{enumerate}
We would thus obtain the diagram\[
\xymatrix{sSet\ar@<2pt>[r]^{|-|}\ar@<-2pt>[d]_{i_{!}} & Top\ar@<2pt>[l]^{Sing}\ar@<-2pt>[d]_{h_{!}}\\
dSet\ar@<-2pt>[u]_{i^{*}}\ar@<2pt>[r]^{|-|_{d}} & dTop\ar@<2pt>[l]^{Sing_{d}}\ar@<-2pt>[u]_{h^{*}}}
\]
where both squares commute. 

The quest will be complete with the establishment of Quillen model
structures on $dSet$ and $dTop$ that slice respectively to the standard
(topological) ones on $sSet$ and $Top$ and such that in the square
above all adjunctions are Quillen adjunctions with both horizontal
ones Quillen equivalences.

\end{document}